\documentclass[11pt, reqno, oneside]{amsart}
\usepackage[utf8]{inputenc}
\usepackage[english]{babel}
\usepackage[T1]{fontenc}
\usepackage{amsmath}
\usepackage{amsfonts}
\usepackage{amssymb}
\usepackage{amsthm}
\usepackage{amscd}
\usepackage{soul}
\usepackage{geometry}
\usepackage{enumitem} 
\usepackage{cancel} 
\usepackage{graphicx}
\usepackage{pgf}

\usepackage{mathtools}
\usepackage{color}
\usepackage{comment}
\usepackage{mathdots}
\usepackage{algorithm}
\usepackage{algpseudocode}
\usepackage{tikz}
\usetikzlibrary{arrows}
\usepackage[notcite,notref]{}
\usepackage{color}
\usepackage{bm}
\usepackage[normalem]{ulem} 
\definecolor{marin}{rgb}   {0.,   0.3,   0.7} 
\definecolor{rouge}{rgb}   {0.8,   0.,   0.} 
\definecolor{sepia}{rgb}   {0.8,   0.5,   0.} 
\definecolor{uuuuuu}{rgb}{0.26666666666666666,0.26666666666666666,0.26666666666666666}
\usepackage[colorlinks,citecolor=marin,linkcolor=rouge,
            bookmarksopen,
            bookmarksnumbered
           ]{hyperref}
\usepackage[noabbrev]{cleveref}
\usepackage[multiuser,draft,inline,marginclue]{fixme}
\fxusetheme{colorsig}
\FXRegisterAuthor{rou}{arou}{Robin}
\FXRegisterAuthor{cha}{acha}{Quentin}

\usetikzlibrary{calc,backgrounds,fit}
\usetikzlibrary{external}
\usetikzlibrary{patterns}
\usetikzlibrary{decorations.pathreplacing,decorations.markings,arrows}
\usetikzlibrary{shapes,intersections,positioning}
\usetikzlibrary{arrows}

\newcommand\N{\mathbb{N}}
\newcommand\T{\mathbb{T}}
\newcommand\Z{\mathbb{Z}}

\newcommand\R{\mathbb{R}}
\newcommand\C{\mathbb{C}}

\newcommand{\dd}{\mathrm{d}}

\newcommand{\enstq}[2]{\left\{#1~\middle|~#2\right\}}

\renewcommand{\Re}{\operatorname{Re}}

\newcommand\Id{\mathrm{Id}}

\newcommand\argmin{\mathrm{argmin}}

\newcommand\diam{\mathrm{diam}}
\newcommand\centre[4][below]{\node (#3) at #2 [circle,minimum size=0.5em,inner sep=0pt,thin,fill,solid] {}; \node [#1=0.1em] at (#3) {#4};}

\newcommand{\uu}{u^{*}}
\newcommand{\uphi}{\phi^{*}}

\theoremstyle{plain}
\newtheorem{theorem}{Theorem} [section]

\newtheorem{assumption}[theorem]{Assumption}
\newtheorem{lemma}[theorem]{Lemma}

\newtheorem{proposition}[theorem]{Proposition}

\theoremstyle{remark}
\newtheorem{remark}[theorem]{Remark}

\numberwithin{equation}{section}

\definecolor{alizarin}{rgb}{0.82, 0.1, 0.26}

\title[FV minimizers and normalized gradient flow for the rotating GPE]{Finite volume discretization and normalized gradient flow for computing minimizers of the rotating Gross--Pitaevskii energy}

\author{Quentin Chauleur}
\address{Univ. Lille, CNRS, Inria, UMR 8524 - Laboratoire Paul Painlevé, F-59000 Lille, France. }
\email{quentin.chauleur@inria.fr}

\author{Guillaume Dujardin}
\address{Univ. Lille, CNRS, Inria, UMR 8524 - Laboratoire Paul Painlevé, F-59000 Lille, France. }
\email{guillaume.dujardin@inria.fr}

\author{Robin Roussel}
\address{Univ. Lille, CNRS, Inria, UMR 8524 - Laboratoire Paul Painlevé, F-59000 Lille, France. }
\email{robin.a.roussel@inria.fr}

\begin{document}

\begin{abstract}
We study the approximation by a finite volume scheme of the stationary Gross--Pitaevskii equation with angular momentum rotation. We prove that, under standard mesh regularity assumptions, the discrete global minimizers converge to the continuous one as the mesh diameter tends to zero. Moreover, under a spectral gap assumption on the Hessian of the discrete energy, we prove that solutions to a normalized continuous gradient flow converges exponentially fast to such discrete minimizers. Additionally, we show that this previous discrete spectral gap assumption can be derived from similar hypotheses in the continuous setting. We support our theoretical results with numerical simulations, which broaden the scope of the analysis.
\end{abstract}

\maketitle

%
%

\section{Introduction}

Let $\mathcal{D} \subset \R^d$ be an open bounded polyhedral subset (Lipschitz domain) of $\R^d$ for $d \in \left\{2,3 \right\}$ with boundary $\partial \mathcal{D}$. We consider the Gross--Pitaevskii equation with a magnetic rotation term
\begin{equation} \label{eq:GP} \tag{GP}
 i \partial_t \psi + \Delta \psi = |\psi|^2 \psi + V \psi - \Omega L \psi  
\end{equation}   
and homogeneous Dirichlet boundary conditions $\psi(x)=0$ for all $x \in \partial \mathcal{D}$. The function $V:\mathcal{D} \to \R_+$ denotes a trapping potential. The operator $L=L_{3}=-i (x_1 \partial_{x_2} -x_2 \partial_{x_1})$ denotes the $x_3$-component of the angular momentum operator $\mathcal{L}=-i x  \wedge \nabla$, with $x =(x_1,x_2)$ if $d=2$ or $x=(x_1,x_2,x_3)$ if $d=3$. Finally, the constant $\Omega \in \R$ denotes the angular velocity.

This equation serves as a fundamental model to describe the dynamics of Bose-Einstein condensates, a state of matter near absolute zero temperature whose evolution can be characterized by a single wave function $\psi$. When set to rotation in a strong confinement regime, such condensates exhibit quantized vortex nucleation processes. This phenomena has been the subject of intense research from the scientific community over the past decades.

Solutions of equation \eqref{eq:GP} enjoy both the conservation of mass $\| \psi(t)\|_{L^2}=\| \psi(0)\|_{L^2} $ for all time $t\in \R$ and the conservation of the energy 
\begin{equation}\label{eq:GP_energy} \tag{GPE}
    E(\phi)= \int_{\mathcal{D}} \left(  \frac{1}{2}|\nabla \phi |^2 + \frac{1}{2}V |\phi|^2  - \frac{1}{2}\Omega \overline{\phi} L \phi + \frac{1}{4} |\phi|^4  \right).
\end{equation}
In this work, we are interested in computing the \textit{ground state} solution of \eqref{eq:GP} at a given $\Omega \in \R$, which minimizes the energy under a mass normalization constraint. As uniqueness may only be provided up to usual invariants of \eqref{eq:GP_energy}, we rather consider the \textit{ground state manifold} 
  \[ \mathcal{U} = \mathop{\argmin}_{\phi \in \mathbb{S}} E(\phi),  \]
where $\mathbb{S} = \{\phi \in H^1_0(\mathcal{D}) \mid \|\phi\|_{L^2}=1\}$. In the (fully real-valued) case without rotation $\Omega=0$, since the seminal work of \cite{BaoDu2004}, a number of numerical methods have been analyzed to compute ground states of \eqref{eq:GP}, and we refer to the reviews \cite{BaoCai2013,HenningJarlebring2025} and references within. While a strong focus has been put on the study of time-iterative solvers, far less works deal with the analysis of minimizers for spatial discretization of \eqref{eq:GP}. One should mention the early contribution from \cite{Zhou2004} for general discretizations, followed by the work of \cite{CancesChakirMaday2010} on Fourier spectral and finite elements approximations. Generalized finite element schemes were further developed in \cite{HenningMalqvistPeterseim2014,HenningPersson2023}.

Adding rotation $\Omega \neq 0$ makes the problem at hand significantly more difficult: the ground state may be complex-valued, is invariant by multiplication with a complex phase shift $e^{i\theta}$ for $\theta \in \R$, and by rotation if the potential $V$ and the domain $\mathcal{D}$ are radially symmetric. The degeneracy of such symmetry-related minimizers thus complicates theoretical studies while potentially slowing down numerical simulations. We refer to \cite{FengHenningTang2026} for a description of the so-called quasi-isolated ground states of \eqref{eq:GP_energy}, an assumption that, at each minimizer, the energy has a strict local minimum outside the directions corresponding to these invariants. 

To the best of the authors' knowledge, the only works that take into account both spatial discretization and angular momentum rotation for \eqref{eq:GP_energy} are \cite{HenningYadavDiscrete2025,HenningPersson2026}, where error analysis is performed in a finite element framework. The aim of this paper is to extend such analysis to a finite volume discretization of \eqref{eq:GP_energy}, following earlier contributions from the authors \cite{Chauleur2026,ChauleurChicireanu2026} and avoiding the standard variational formulation of finite elements. In particular, we propose for the first time in the literature a direct structure-preserving discretization of the angular momentum operator $L$ on non-cartesian meshes. Moreover, finite volume schemes have proven to be useful for simulating classical turbulent flows \cite{Feireisl2019} and efficiently capturing vortex nucleation processes \cite{Busto2023}. We continue our study on the adaptation of such techniques for quantum fluids (see also \cite{CalgaroColombierCreuse2026} for a recent contribution), since main quantities of interest such as vorticity \cite{brachet2023,krstulovic2016} are deeply related to fluids mechanics, see e.g. \cite{Aftalion2006} for a rigorous mathematical study of quantum vortices. We furthermore remove the usual convex assumption on $\mathcal{D}$ of previous works by only considering open bounded polyhedral subset of $\R^d$, which include recent experimental setup of physical relevance such as rings geometries \cite{ChauleurChicireanu2026} or shell-shaped condensates \cite{Rhyno2026}. 

This paper is organized as follows. In Section~\ref{sec:framework_and_results} we introduce standard notations for finite volume schemes as well as a finite volume discretization of \eqref{eq:GP_energy} before stating our main results: Theorem~\ref{th:cv_ground_states} and Theorem~~\ref{th:exp_convergence}. In Section~\ref{sec:discrete_function_analysis} we both recall and develop some discrete functional analysis tools required for our study, in particular for the discretization of the operator $L$. Section~\ref{sec:minimizers} is then dedicated to the proof of Theorem~\ref{th:cv_ground_states}, showing that both the discrete energy and minimizers converge towards their continuous counterparts. We then prove in Section~\ref{sec:gradient_flow} that the standard normalized gradient flow from~\cite{BaoDu2004,FaouJezequel2018} exponentially converges to the discrete ground state manifold in our rotational finite volume setting, with an exponential rate independent of the mesh size under a quasi-isolation hypothesis on the continuous ground-state. Finally, we corroborate our theoretical results by numerical simulations in Section~\ref{sec:numerics} (for $d=2$), providing further discussions and open perspectives.

To avoid confusion, continuous (\textit{resp.} discrete) functions are denoted by greek (\textit{resp.} latin) letters and minimizers are indicated with a star as superscript.


\section{Discrete framework and main results} \label{sec:framework_and_results}

\subsection{Finite volume meshes}

We consider a mesh $\mathcal{M} = (\mathcal{F}, \mathcal{E})$ on $\mathcal{D}$, where $\mathcal{F}$ is a set of disjoint convex open polyhedra such that $\cup_{K \in \mathcal{F}} \bar{K} = \bar{\mathcal{D}}$ and $\mathcal{E}$ is a set of disjoint open subsets of hyperplanes such that $\cup_{K \in \mathcal{F}} \partial K = \cup_{\sigma \in \mathcal{E}} \bar{\sigma}$. For all $K$ and $L$ in $\mathcal{F}$, $\bar{K} \cap \bar{L}$ either has zero $(d-1)$-dimensional Lebesgue measure, or $\bar{K} \cap \bar{L} = \bar{\sigma} \in \mathcal{E}$. In this case, we denote $\sigma = K|L$ and say that $\sigma \in \mathcal{E}_{\mathrm{int}}$. We conversely define the set of exterior edges $\mathcal{E}_{\mathrm{ext}} = \mathcal{E} \backslash \mathcal{E}_{\mathrm{int}}$. Moreover, for $K \in \mathcal{F}$, we define $\mathcal{E}_K = \{\sigma \in \mathcal{E} \mid \sigma \subset \overline{K}\}$.

Throughout the paper, all meshes are assumed to be \textit{admissible} in the following sense: for all $K$ in $\mathcal{F}$, there exists a point $x_K \in K$, such that if $\sigma = K|L$, then the segment $[x_K, x_L]$ is orthogonal to the hyperplane containing $\sigma$. For all $K$ in $\mathcal{F}$ and $\sigma \in \mathcal{E}$, we also define $|K|$ as the $d$-dimensional Lebesgue measure of $K$, $|\sigma|$ as the $(d-1)$-dimensional Lebesgue measure of $\sigma$, $x_\sigma$ as the center of mass of $\sigma$ and $n_{\sigma, K}$ as the unit vector normal to $\sigma$ and pointing away from $K$ if $\sigma \in \mathcal{E}_K$. Also, for $K \in \mathcal{F}$ and $\sigma \in \mathcal{E}_K$, we define $d_{\sigma, K} = \inf_{x \in \sigma}|x_K - x|$ and 
$$
d_\sigma \coloneqq 
\begin{cases}
    d_{\sigma, K} + d_{\sigma, L} &\text{if } \sigma=K|L,
    \\
    d_{\sigma, K} &\text{if } \sigma \in \mathcal{E}_{\mathrm{ext}}.
\end{cases}
$$
Finally, for $K \in \mathcal{F}$, we denote by $\diam(K)$ its diameter and define $h = \max_{K \in \mathcal{F}} \diam(K)$. As is often the case, we will regularly use the abuse of notation of referring to the mesh dependency of the objects using only the mesh diameter $h$ rather than the complete mesh~$\mathcal{M}$.

We introduce the following measure of regularity for the mesh $\mathcal{M} = (\mathcal{F}, \mathcal{E})$:
\begin{equation} \label{eq:mesh_regularity}
\xi=\xi_h \coloneqq \min \left( \min_{K \in \mathcal{F},\sigma \in \mathcal{E}_K}\frac{d_{\sigma,K}}{\diam(K)},\min_{K \in \mathcal{F},\sigma \in \mathcal{E}_K} \frac{d_{\sigma,K}}{d_\sigma}  \right) \leq 1,
\end{equation}  
 which is standard to obtain convergence results (see for example \cite{CoudiereDiscrete2001,EymardGallouetHerbin2000}). For $\Xi \in (0, 1]$, we say that an admissible mesh $\mathcal{M}$ is $\Xi$-regular if $\xi_h \geq \Xi$. Moreover, a sequence of admissible meshes $(\mathcal{M}_n)_{n \in \N}$ is said to be \textit{uniformly regular} if there exists a constant $\Xi \in (0, 1]$ such that $\mathcal{M}_n$ is $\Xi$-regular for all $n \in \N$. Note that without explicit mention in the results of this article, this regularity property is not assumed.


For a given mesh $\mathcal{M}$, we now define the associated \textit{diamond mesh} $\mathcal{M}^*=(\mathcal{F}^*,\mathcal{E}^*)$. Using similar notations as in \cite[Section 9]{EymardGallouetHerbin2000}, for $K \in \mathcal{F}$ and $\sigma \in \mathcal{E}_K$, we define 
$$
\mathcal{V}_{\sigma, K} \coloneqq \{tx + (1-t)x_K, x \in \sigma, t \in (0, 1)\},
$$
as well as 
$$
\mathcal{V}_\sigma \coloneqq 
\begin{cases}
    \mathrm{int}\left(\overline{\mathcal{V}_{\sigma, K}} \cup \overline{\mathcal{V}_{\sigma, L}}\right) &\text{if } \sigma = K|L, 
    \\
    \mathcal{V}_{\sigma, K} &\text{if } \sigma \in \mathcal{E}_{\mathrm{ext}}.
\end{cases}
$$
We then define the set of diamond faces $\mathcal{F}^* \coloneqq \enstq{\mathcal{V}_\sigma}{\sigma \in \mathcal{E}}$, and we naturally define the associated set of diamond edges $\mathcal{E}^*$, see e.g. \cite{EymardGallouetHerbin2000}. Note that $|\mathcal{V}_{\sigma, K}| = d_{\sigma, K}|\sigma|/d$ and $|\mathcal{V}_\sigma| = d_\sigma|\sigma|/d$. We refer to Figure \ref{fig:notation_mesh} for an illustration of these notations.

\begin{figure}[htbp!]
\centering
\begin{tikzpicture}[scale=2]

  \clip (-1.2,-0.6) rectangle (1.8,1.3);

  \node[rectangle,fill] (a) at (-1,0.6) {};
  \node[rectangle,fill] (b) at (0,1.2) {};
  \node[rectangle,fill] (c) at (0,-0.2) {};
  \node[rectangle,fill] (d) at (1.5,0.3) {};

  \centre[above right]{(-0.6,0.5)}{xK}{$x_K$};
  \centre[above left]{(0.9,0.5)}{xL}{$x_L$};
  
  \draw[thick] (b)--(c) node [pos=0.7,right] {$\sigma=$\small{$K|L$}};

  \draw[thin,opacity=0.5] (a) -- (b) -- (c) -- (a) ;
  \draw[thin,opacity=0.5] (d) -- (b) -- (c) -- (d);

  \draw[dashed] (xK) -- (xL);
  
  \coordinate (KK) at ($(xK)!0.65!-90:(xL)$);
  \coordinate (LL) at ($(xL)!0.65!90:(xK)$);

  \draw[dotted,thin] (xK) -- (KK);
  \draw[dotted,thin] (xL) -- (LL);

  \draw[|<->|] (KK) -- (LL) node [midway,fill=white,sloped] {$d_{\sigma}$};
 
  \coordinate (KAB) at ($(a)!(xK)!(b)$);
  \coordinate (KAC) at ($(a)!(xK)!(c)$);

  \coordinate (LDB) at ($(d)!(xL)!(b)$);
  \coordinate (LDC) at ($(d)!(xL)!(c)$);

  \draw[dashed] (xK) -- ($(xK)!3!(KAB)$);
  \draw[dashed] (xK) -- ($(xK)!3!(KAC)$);

  \draw[dashed] (xL) -- ($(xL)!3!(LDB)$);
  \draw[dashed] (xL) -- ($(xL)!3!(LDC)$);

  \begin{scope}[on background layer]   
    \draw (0,0.5) rectangle ++ (0.1,-0.1);
  \end{scope}
  
  \coordinate (nukl) at ($(b)!0.3!(c)$);
  \draw[->,>=latex] (nukl) -- ($(nukl)!0.3cm!90:(c)$) node[above] {$n_{\sigma, K}$};

\end{tikzpicture}
\qquad
\begin{tikzpicture}[scale=2]

  \clip (-1.2,-0.6) rectangle (1.8,1.6);

  \coordinate (xK) at (-0.5,0.5);
  \coordinate (xL) at (0.8,0.5);

  \node[rectangle,fill] (a) at (-1,0.6) {};
  \node[rectangle,fill] (b) at (0,1.2) {};
  \node[rectangle,fill] (c) at (0,-0.2) {};
  \node[rectangle,fill] (d) at (1.5,0.3) {};

  \centre[above left]{(-0.6,0.5)}{xK}{$x_K$};
  \centre[below right]{(0.9,0.5)}{xL}{$x_L$};

  \coordinate (KK) at ($(xK)!0.65!-90:(xL)$);
  \coordinate (LL) at ($(xL)!0.65!90:(xK)$);

  \draw[thin,opacity=0.5] (a) -- (b) -- (c) -- (a) ;
  \draw[thin,opacity=0.5] (d) -- (b) -- (c) -- (d);
  \draw[dashdotted] (xK) -- (c) -- (xL) -- (b) -- (xK);

  \draw[thick] (b) -- (c);

  \draw[thick,dashed] (xK) -- (xL);

  \draw (0,0.5) node [above right]  {$\mathcal{V}_\sigma$};
  \draw (0,0.3) node [left]  {$\sigma$};

  \begin{scope}[on background layer]
    \fill [diamond,fill opacity=0.1] (xK.center) -- (c.center)-- (xL.center) -- (b.center);
    \draw (0,0.5) rectangle ++ (0.1,-0.1);
  \end{scope}

\end{tikzpicture}
\caption{Notations for the mesh $\mathcal{M}$ and the diamond mesh $\mathcal{M}^*$ for $\sigma \in \mathcal{E}_{\mathrm{int}}$.}
\label{fig:notation_mesh}
\end{figure}

We now turn to the definitions of the discrete functional spaces. We define $X(\mathcal{M})$ as the space of complex-valued functions which are piecewise constant on the cells $\mathcal{F}$. Similarly, we define $X(\mathcal{M}^*)$ as the space of complex-valued functions which are piecewise constant on the diamond cells $\mathcal{F}^*$. We note that these spaces inject into $L^p(\mathcal{D})$ for all $p \geq 1$ and that, for $u \in X(\mathcal{M})$ and $v \in X(\mathcal{M}^*)$, we have 
\[    \|u\|_{L^p}^p = \sum_{K \in \mathcal{F}} |u_K|^p |K| \quad \text{and} \quad \|v\|_{L^p}^p = \sum_{\sigma \in \mathcal{E}} |v_\sigma|^p |\mathcal{V}_\sigma|= \sum_{\sigma \in \mathcal{E}} |v_\sigma|^p \frac{d_\sigma |\sigma|}{d}. \]
We recall the definition of discrete Sobolev spaces. First, we define for $K \in \mathcal{F}$ and $\sigma \in \mathcal{E}_K$
$$
D_{\sigma, K} u \coloneqq 
\begin{cases}
    u_L - u_K &\text{if } \sigma=K|L,
    \\
    -u_K &\text{if } \sigma \in \mathcal{E}_{\mathrm{ext}}.
\end{cases}
$$
We note that $|D_{\sigma, K} u| = |D_{\sigma, L} u|$, so that we may write $|D_\sigma u|$ without ambiguity. Similarly, $(D_{\sigma, K} u) n_{\sigma, K} = (D_{\sigma, L} u) n_{\sigma, L}$ so that we may write $(D_\sigma u) n_\sigma$. We then define the norm
$$
|u|_{1, p}^p \coloneqq \sum_{\sigma \in \mathcal{E}} \frac{|\sigma|}{d_\sigma^{p-1}} |D_\sigma u|^p.
$$
for any $p \geq 1$ as the discrete counterpart of standard homogeneous seminorms on the Sobolev spaces $W^{1,p}_0(\mathcal{D})$ or $H^1_0(\mathcal{D})$ for $p=2$. Finally, we define the discrete interpolation function $I_h : X(\mathcal{M}) \rightarrow X(\mathcal{M}^*)$ as 
$$
(I_h u)_\sigma \coloneqq 
\begin{cases}
    \frac{1}{2}(u_K + u_L) &\text{if } \sigma= K|L, 
    \\ 
    \frac{1}{2}u_K &\text{if } \sigma \in \mathcal{E}_{\mathrm{ext}}.
\end{cases}
$$
For conciseness, we will generally write $I_\sigma u := (I_h u)_\sigma$. We also define $I_h^* : X(\mathcal{M}^*) \rightarrow X(\mathcal{M})$ as the adjoint of $I_h$ with respect to the standard $L^2$-product. 

Throughout the article, we will be using two notions of inner-products. For $f, g \in L^2(\mathcal{D})$, we define 
$$
(f, g) = \int_\mathcal{D} f\overline{g} \text{ and } \langle f, g \rangle = \Re(f, g),
$$
hence $(\cdot, \cdot)$ corresponds to the usual Hermitian product on the complex Hilbert space $L^2(\mathcal{D})$, while $\langle \cdot, \cdot\rangle$ is the corresponding real inner product. Indeed, since the energy is always real-valued, it is not complex Fréchet differentiable. When considering differentials of the energy, we therefore need to endow $L^2(\mathcal{D})$ with a Hilbert structure over $\R$, as usually done for nonlinear Schrödinger type equations, see e.g. \cite{Cazenave}.

\subsection{Finite volume discretization}

We now introduce the finite volume discretization of the Gross--Pitaesvkii energy \eqref{eq:GP_energy} that will be studied throughout the paper. We begin by recalling the definition of the two points flux approximation (TPFA) of the Dirichlet Laplacian. For $u \in X(\mathcal{M})$ and $K \in \mathcal{F}$, we define 
$$
(A_h u)_K \coloneqq \frac{1}{|K|}\sum_{\sigma \in \mathcal{E}_K} \frac{|\sigma|}{d_\sigma} D_{\sigma, K}u.
$$
Note that $|u|_{1, 2}^2 = -( A_h u, u )$. We then introduce a discretization of the angular momentum operator $L$. We recall that for any $\varphi \in \mathcal{C}^1(\bar{\mathcal{D}})$, we have $L \varphi = i R \cdot \nabla \varphi$, where $R$ is given by 
$$
R(x) = 
\begin{cases}
    (x_2, -x_1) &\text{if } d=2,
    \\
    (x_2, -x_1, 0) &\text{if } d=3.
\end{cases}
$$
Since $R$ is divergence free, we also have $L \varphi = i \mathrm{div}(R \varphi)$. From this, we may write 
$$
\int_{K} L \varphi = i\sum_{\sigma \in \mathcal{E}_K} \int_\sigma \varphi(x) (R(x) \cdot n_{\sigma, K}).
$$
This formula motivates the definition of the discretized angular momentum
\begin{equation} \label{eq:def_Lz}
(L_h u)_K \coloneqq \frac{i}{|K|} \sum_{\sigma \in \mathcal{E}_K} |\sigma|(I_\sigma u)(R(x_\sigma) \cdot n_{\sigma, K}), 
\end{equation}    
for $u$ in $X(\mathcal{M})$ and $K \in \mathcal{F}$. For the potential part, we consider $V\in W^{1,\infty}(\mathcal{D})$  throughout this paper, and we assume that there exists a constant $\kappa>0$ such that for all $x \in \mathcal{D}$,
\begin{equation} \label{eq:assumption_V}
V(x) \geq (1+\kappa)\frac{d \Omega^2}{4}|x|^2.
\end{equation}    
We then set 
\[    V_h u \coloneqq I_h^* \left[\left(P_h^\diamond V\right) I_h u\right] \]
for the discretization of the potential $V$, where the projection $P_h^\diamond V \in X(\mathcal{M}^*)$ is given by $(P_h^\diamond V)_\sigma := V(x_\sigma)$. As will be seen in Section \ref{sec:minimizers}, this discretization of the potential energy on the diamond mesh will prove to have good properties when considering the coercivity of the discretized energy. We note in passing that, for $u\in X(\mathcal{M})$, we have 
\[    \left( V_h u, u \right) = \left( P_h^\diamond V I_h u, I_h u \right) = \sum_{\sigma \in \mathcal{E}} V(x_\sigma) |I_\sigma u|^2 |\mathcal{V}_\sigma|. \]
We finally define the discrete energy functional $E_h : X(\mathcal{M}) \rightarrow \R$ as
\begin{equation}\label{eq:discrete_GP_energy} \tag{DGPE}
    E_h(u) \coloneqq \frac{1}{2}\left( H_h u, u \right) + \frac{1}{4}\|u\|_{L^4}^4,
\end{equation}
where the discrete linear operator $H_h$ is given by 
$$
H_h = -A_h + V_h - \Omega L_h.
$$
Note that one needs to prove that $H_h$ is self-adjoint in order to show that $E_h$ is real-valued, which will be done in Section \ref{sec:minimizers}. We also note that the gradient of the energy is given by
\begin{equation}\label{eq:energy_gradient}
E_h'(u) = H_h u + |u|^2 u,  \quad u \in X(\mathcal{M})
\end{equation}
and that for any $u \in X(\mathcal{M})$, the Hessian $E''_h(u) : X(\mathcal{M}) \rightarrow X(\mathcal{M})$ is given by
\begin{equation}\label{eq:energy_hessian}
    E_h''(u) v = H_h v + |u|^2v + 2 u \mathrm{Re}(\overline{u}v), \quad v \in X(\mathcal{M}).
\end{equation}


\subsection{Main results}
\label{subsec:mainresults}

We begin by considering the minimization problem of the discrete energy under normalized mass constraint. For an admissible mesh $\mathcal{M}$, we define the set
\begin{equation} \label{eq:discrete_minimization_pb}
\mathcal{U}_h = \mathop{\argmin}_{u \in \mathbb{S}_h} E_h(u),
\end{equation}
where $\mathbb{S}_h \coloneqq \enstq{u \in X(\mathcal{M})}{ \|u\|_{L^2}=1}$. Since $X(\mathcal{M})$ is a finite dimensional space, it is clear that $\mathcal{U}_h$ is not empty. However, in the continuous case, it is known that minimizers exist only when the angular velocity $\Omega$ is small enough not to overcome the confining potential~$V$, see e.g. \cite[Theorem 5.2.]{BaoCai2013}. In our case, the lower estimate \eqref{eq:assumption_V} guarantees the existence of a ground state in the continuous setting. Compared to the classical assumption on the confining potential for the existence of ground states \cite{HenningYadavDiscrete2025}, we note the presence a factor depending on the dimension in \eqref{eq:assumption_V}. As it will be clear in the computations of Section~\ref{sec:minimizers}, this factor is needed for proving coercivity of the discrete energy. Note that this is essentially harmless for the case of strong confining potentials such as quartic potentials, as well as for the usual case of a harmonic potential in low rotation regimes. Numerical investigations tend to show that it may be an artifact of proof, so that the additional factor~$d$ can be removed in practice.

Our first result concerning the convergence of discrete minimizers is the following.
\begin{theorem}\label{th:cv_ground_states}
    Let $(\mathcal{M}_n)_{n \in \N}$ be a sequence of uniformly regular admissible meshes with $h_n := h_{\mathcal{M}_n} \rightarrow 0$, and $\uu_n$ be a minimizer in $\mathcal{U}_{h_n}$. Then up to a subsequence, $(\uu_n)_{n \in \N}$ converges in $L^2(\mathcal{D})$ to a continuous minimizer $\uphi \in \mathcal{U}$. Furthermore, $(E_{h_n}(\uu_n))_{n\in \N}$ converges to the ground state energy $E(\uphi)$.
\end{theorem}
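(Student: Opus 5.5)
We follow the standard $\Gamma$-convergence route for discrete minimization problems: uniform coercivity, compactness, a liminf inequality along discrete sequences, and a limsup inequality through a recovery sequence.

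\textbf{Step 1: self-adjointness and coercivity.} We first check that $H_h$ is self-adjoint, so that $E_h$ is real-valued. The operator $L_h$ is symmetric because $I_h$ averages over the two neighbouring cells while $n_{\sigma,K}=-n_{\sigma,L}$. The key step is a coercivity estimate, uniform in $h$ for $\Xi$-regular meshes,
\[
E_h(u) \geq c\,|u|_{1,2}^2 + c\,(V_h u,u) - C, \qquad u\in\mathbb{S}_h .
\]
To get it, we bound the rotation term by Cauchy--Schwarz:
\[
|\Omega(L_hu,u)| \le \Omega\sum_\sigma |\sigma|\,|R(x_\sigma)\cdot n_\sigma|\,|D_\sigma u|\,|I_\sigma u| .
\]
Next we write $|\sigma| = (d_\sigma|\sigma|)^{1/2}(|\sigma|/d_\sigma)^{1/2}$ and apply Young's inequality. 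This yields
\[
|\Omega(L_hu,u)| \le \tfrac{1}{1+\kappa'}\,|u|_{1,2}^2 + \tfrac{(1+\kappa')\,d\,\Omega^2}{4}\sum_\sigma |x_\sigma|^2\,|I_\sigma u|^2\,|\mathcal{V}_\sigma| .
\]
Here $|\mathcal{V}_\sigma| = d_\sigma|\sigma|/d$ produces exactly the factor $d$ appearing in \eqref{eq:assumption_V}, which then absorbs the last sum into the potential term. A discrete Sobolev inequality $\|u\|_{L^4}\le C|u|_{1,2}$ on regular meshes, recalled in Section~\ref{sec:discrete_function_analysis}, controls the quartic term.

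\textbf{Step 2: upper bound and compactness.} We take $\uphi\in\mathcal{U}$ and, by density, approximate it by smooth compactly supported $\varphi_\varepsilon$. We use the cell-centre interpolant $u_n = P_{h_n}\varphi_\varepsilon/\|P_{h_n}\varphi_\varepsilon\|_{L^2}$. Consistency of TPFA, the centered $L_h$ and the diamond-mesh potential gives $E_{h_n}(u_n)\to E(\varphi_\varepsilon/\|\varphi_\varepsilon\|)$. For $L_h$ this is a Taylor expansion using $R(x_\sigma)\cdot n_\sigma$ together with the consistency of $I_\sigma$. Hence
\[
\limsup_n E_{h_n}(\uu_n) \le \inf_{\mathbb{S}} E = E(\uphi).
\]
Coercivity then bounds $|\uu_n|_{1,2}$ uniformly. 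The discrete Rellich theorem for uniformly regular meshes (Eymard--Gallouët--Herbin) gives a subsequence with $\uu_n\to\phi$ strongly in $L^2$, with $\phi\in H^1_0(\mathcal{D})$ and $\|\phi\|_{L^2}=1$. The same result gives weak convergence of the discrete gradients $\nabla_h\uu_n\rightharpoonup\nabla\phi$, where on each diamond cell $\nabla_h u := d\,(D_\sigma u)\, n_\sigma/d_\sigma$.

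\textbf{Step 3: liminf inequality and conclusion.} We need $E(\phi)\le\liminf_n E_{h_n}(\uu_n)$, handling each term separately.
\begin{itemize}
\item The quartic term converges strongly, by $L^2$ convergence and the uniform $L^4$ bound via interpolation with $L^6$ or the Sobolev bound.
\item The kinetic term is weakly lower semicontinuous.
\item $I_h\uu_n\to\phi$ in $L^2$, since $\|I_hu-u\|\lesssim h|u|_{1,2}$; with Lipschitz $V$ this gives convergence of the potential term.
\end{itemize}
We expect the rotation term to be the main obstacle, because it is only weak–strong and is not lower semicontinuous by itself. We would identify
\[
(L_hu,u) = i\sum_\sigma |\mathcal{V}_\sigma|\,\big(R(x_\sigma)\cdot\nabla_h u\big)_\sigma\,\overline{I_\sigma u}
\]
up to sign conventions and a consistency error. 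We would then pass to the limit in the pairing $(R\cdot\nabla_h\uu_n,\, I_h\uu_n)$, with weakly converging gradients against strongly converging $I_h\uu_n$, to get $(L\phi,\phi)$. If the exact reorganization of the sum into this product form fails, we would instead treat the rotation and gradient terms together: the quadratic form $|u|_{1,2}^2 - \Omega(L_hu,u) + (V_hu,u)$ is nonnegative by Step 1, and we would prove lower semicontinuity of this whole magnetic-type form.

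Combining Steps 2 and 3 gives
\[
E(\phi)\le\liminf_n E_{h_n}(\uu_n)\le\limsup_n E_{h_n}(\uu_n)\le E(\uphi)=\min_{\mathbb{S}} E .
\]
Therefore $\phi\in\mathcal{U}$, and the energies converge to the ground state energy, which proves Theorem~\ref{th:cv_ground_states}.
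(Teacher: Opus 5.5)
Steps~1 and~2 are sound and essentially coincide with the paper: Step~1 is the Cauchy--Schwarz/Young estimate of Proposition~\ref{pr:equivalence_h1}, and Step~2 is the upper bound via $\widetilde P_{h_n}\varphi$ and Proposition~\ref{pr:convergence_energy}. The gap is the liminf inequality in Step~3, and it sits in the kinetic term, not the rotation term. Your reconstruction $\nabla_h u=d\,(D_\sigma u)\,n_\sigma/d_\sigma$ on $\mathcal V_\sigma$ is weakly consistent precisely because of the factor $d$, but then
\[
\|\nabla_h u\|_{L^2}^2=\sum_{\sigma\in\mathcal E}\frac{|\sigma|\,d_\sigma}{d}\,\frac{d^2|D_\sigma u|^2}{d_\sigma^2}=d\,|u|_{1,2}^2 .
\]
So weak lower semicontinuity of the $L^2$ norm only gives $\|\nabla\phi\|_{L^2}^2\le d\,\liminf_n|\uu_n|_{1,2}^2$. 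Losing the factor $d\in\{2,3\}$ destroys $E(\phi)\le\liminf_n E_{h_n}(\uu_n)$. No rescaling of $\nabla_h$ repairs this: for smooth $\varphi$ one has $|P_h\varphi|_{1,2}^2\to\|\nabla\varphi\|_{L^2}^2$ (TPFA consistency), hence $\|\nabla_h P_h\varphi\|_{L^2}^2\to d\|\nabla\varphi\|_{L^2}^2$. This diamond gradient is weakly but not strongly consistent. The rotation term you flagged is in fact harmless. Your identity $(L_hu,u)=i\int_{\mathcal D}(P_h^\diamond R)\cdot\nabla_h u\,\overline{I_h u}$ holds exactly, by summation by parts in \eqref{eq:def_Lz} with no consistency error. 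It is a weak--strong pairing, so it converges to $(L\phi,\phi)$.

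Your fallback is the right fix, but it needs an argument. The cleanest one uses only Step~1 and the paper's consistency results. Since $H_h$ is self-adjoint and nonnegative, $(H_h(u-w),u-w)\ge 0$ gives, for every $\chi\in H^1_0(\mathcal D)\cap H^2(\mathcal D)$,
\[
(H_{h_n}\uu_n,\uu_n)\ge 2\Re\,(H_{h_n}\uu_n,P_{h_n}\chi)-(H_{h_n}P_{h_n}\chi,P_{h_n}\chi).
\]
The right-hand side tends to $2\Re(H\phi,\chi)-(H\chi,\chi)$ by Proposition~\ref{pr:compactness_consistency} and the proof of Proposition~\ref{pr:convergence_energy}. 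Taking the supremum over this dense class yields $\liminf_n(H_{h_n}\uu_n,\uu_n)\ge(H\phi,\phi)$, which together with strong $L^4$ convergence gives your liminf.

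The paper avoids any liminf inequality. It uses that $\uu_n$ is a constrained critical point and passes to the limit in the discrete Euler--Lagrange equation tested against $P_{h_n}\varphi$ (Proposition~\ref{pr:cv_critical_points}). Testing the limit equation with $\phi^*$ gives $\mu_n\to\mu=2E(\phi^*)+\tfrac12\|\phi^*\|_{L^4}^4$. It then reads off $E_{h_n}(\uu_n)\to E(\phi^*)$ from $\mu_n=2E_{h_n}(\uu_n)+\tfrac12\|\uu_n\|_{L^4}^4$. The repaired $\Gamma$-convergence route also covers approximate minimizers. The paper's route instead yields, along a subsequence, energy convergence for any bounded-energy sequence of discrete critical points, not only minimizers.
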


We now introduce a discrete $L^2$-normalized gradient flow to compute such discrete minimizers. For $u(0) \in \mathbb{S}_h$, we consider 
\begin{equation}\label{eq:gradient_flow} \tag{GF}
    \partial_t u(t) = - E_h'(u(t)) + \mu_h(u(t))u(t),
\end{equation}
for all $t \geq 0$, where $ E_h'(u)$ is given by \eqref{eq:energy_gradient} and 
\begin{equation}\label{eq:discrete_chemical_potential}
    \mu_h(u) = \frac{1}{\|u\|_{L^2}^2}\langle  E_h' (u), u\rangle
\end{equation}
is a Lagrange multiplier ensuring that the solutions stay normalized over time, since
\[ \frac12 \frac{\dd}{\dd t} \|u\|_{L^2}^2 =  \langle \partial_t u, u\rangle=  -\langle E_h'(u), u \rangle + \frac{1}{\|u\|_{L^2}^2}\left\langle \langle E_h'(u), u \rangle u, u\right\rangle =0 . \]
We also note that 
\begin{equation}\label{eq:alternative_formula_mu}
  \mu_h(u) = \frac{1}{\|u\|_{L^2}^2}\left(2E_h(u) + \frac{1}{2}\|u\|_{L^4}^4\right).
\end{equation}
Furthermore, it is straightforward to see that any solution $u$ of \eqref{eq:gradient_flow} is energy diminishing:
\[ \frac{\dd}{\dd t} E_h(u) = \langle E_h'(u), \partial_t u \rangle= \langle -\partial_t u + \mu_h(u) u, \partial_t u\rangle= -\|\partial_t u\|^2 + \frac12 \mu_h(u) \frac{\dd}{\dd t}\|u\|^2 = -\|\partial_t u\|^2 \leq 0.  \]
Our second main result shows the local exponential convergence of solutions to \eqref{eq:gradient_flow} to (local) minimizers. We start by defining the set of critical points of the discrete energy on the unit $L^2$-sphere:
\begin{equation}
  \label{eq:crit} \mathrm{Crit}_h \coloneqq \enstq{u \in \mathbb{S}_h}{ E_h'(u) = \mu_h(u)u}.
\end{equation}
As \eqref{eq:discrete_minimization_pb} stands as a minimization problem on the sphere $\mathbb{S}_h$, we consider the tangent space to $\mathbb{S}_h$ which is given by the following orthogonal space with respect to $u \in \mathbb{S}_h$:
\[ T_u \mathbb{S}_h = \enstq{v \in X(\mathcal{M})}{ \langle u, v\rangle=0}.\]
We also write $T_u^\C\mathbb{S}_h = T_u\mathbb{S}_h \cap T_{iu}\mathbb{S}_h$, which denotes the orthogonal complement of $u$ with respect to the complex $L^2$-inner product $(\cdot,\cdot)$.

Symmetries of the energy $E_h$, such as invariance by a phase shift $e^{i\theta}u$ in our discrete case, leads to degenerate directions in the tangent space that should be removed in order to infer local coercivity. In fact, differentiating twice $E_h(e^{i\theta}u)$ with respect to $\theta$ shows that the operator $E_h''(u) - \mu_h(u)$ is vanishing on the direction of $iu$. We then introduce a quasi-coercivity condition (see also \cite[Definition 2.2]{HenningYadavDiscrete2025}) on the Hessian of the energy.

\begin{assumption}\label{ass:quasi_coercivity}
    A critical point $\uu \in \mathrm{Crit}_h$ is said to be quasi-isolated if there exists a constant $C > 0$ such that, for all $v \in T_{\uu}^\C\mathbb{S}_h$,
\[    \left\langle (E_h''(\uu) - \mu_h(\uu)\Id)v, v\right\rangle \geq C|v|_{1, 2}^2. \]
\end{assumption}

Note that an analogous assumption in the continuous case, known as the \textit{fundamental gap conjecture} \cite[Conjecture 1.6]{AntonelliShakarov2024}, is considered to be true for the ground state of \eqref{eq:GP_energy}, as numerical simulations and formal expansions confirm this conjecture \cite{BaoRuan2018}, although a rigorous proof is still open. Moreover, we will prove in Section~\ref{sec:spectral_gap} using a spectral convergence argument that the existence of a spectral gap in the continuous case implies Assumption~\ref{ass:quasi_coercivity} on meshes with small enough diameter.

Our second main result on local convergence of the gradient flow \eqref{eq:gradient_flow} reads as follows.
\begin{theorem}\label{th:exp_convergence}
    Suppose $\uu \in \mathrm{Crit}_h$ satisfies Assumption \ref{ass:quasi_coercivity}. Then, there exists $C>0$ such that, for all $\varepsilon > 0$, there exists $\delta > 0$ such that, if $\inf_{\theta \in \R}|u(0)-e^{i\theta}\uu|_{1,2} < \delta$, then for all $t \geq 0$,
    $$
    \inf_{\theta \in \R} \|u(t) - e^{i\theta}\uu\|_{L^2} \leq C e^{-(\lambda-\varepsilon)t} \inf_{\theta \in \R} \|u(0) - e^{i\theta}\uu\|_{L^2},
    $$
    where $u$ denotes the solution to \eqref{eq:gradient_flow} and the discrete spectral gap $\lambda$ is given by
\begin{equation} \label{eq:lambda}
\lambda \coloneqq \inf_{v \in T_{\uu}^\C \mathbb{S}_h\backslash\{0\}} \frac{\left\langle \left(E_h''(\uu) - \mu_h(\uu) \Id \right)v, v\right\rangle}{\|v\|_{L^2}^2} >0.
\end{equation}     
\end{theorem}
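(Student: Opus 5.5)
Since $X(\mathcal{M})$ is finite dimensional, \eqref{eq:gradient_flow} is a smooth ODE on $\mathbb{S}_h$ that leaves the sphere invariant. We prove Theorem~\ref{th:exp_convergence} by linearizing at the critical point and working modulo the phase symmetry. The main device is an optimally phase-aligned error. For $u$ close to the orbit $\{e^{i\theta}\uu\}$, we choose $\theta(t)$ minimizing $\|u(t)-e^{i\theta}\uu\|_{L^2}$. The minimizer is characterized by $\Im\,(u,e^{i\theta}\uu)=0$, and it is unique and smooth in $t$ as long as $\Re\,(u,e^{i\theta}\uu)>0$, which holds near the orbit. Both $E_h$ and \eqref{eq:gradient_flow} are invariant under $u\mapsto e^{i\theta}u$, and $e^{i\theta}\uu\in\mathrm{Crit}_h$. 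We may therefore write $u(t)=e^{i\theta(t)}(\uu+w(t))$, where $w$ satisfies $\Im(w,\uu)=0$. The constraint $\|\uu+w\|=1$ gives $\Re(w,\uu)=-\tfrac12\|w\|^2$. Decomposing $w=v+\alpha\uu$ with $v\in T^\C_{\uu}\mathbb{S}_h$, we get $|\alpha|=O(\|w\|^2)$, so $w$ lies in $T^\C_{\uu}\mathbb{S}_h$ up to a quadratic error. In particular $\inf_\theta\|u(t)-e^{i\theta}\uu\|_{L^2}=\|w(t)\|_{L^2}$.

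\textbf{Energy estimate.} Set $\mathcal{H}:=E_h''(\uu)-\mu_h(\uu)\Id$, which is self-adjoint for $\langle\cdot,\cdot\rangle$ once $H_h$ is known to be self-adjoint, and satisfies $\mathcal{H}(i\uu)=0$. We Taylor expand the right-hand side of \eqref{eq:gradient_flow} around $\uu$. Because $E_h'$ and $\mu_h$ are polynomial, hence smooth, on the finite-dimensional space, this gives
\[
-E_h'(\uu+w)+\mu_h(\uu+w)(\uu+w) = -\mathcal{H}w + \big(\langle \mathcal{H}w,\uu\rangle+\dots\big)\uu + O(\|w\|^2),
\]
where the $\uu$-component is the derivative of $\mu_h$. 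The phase dynamics add a term $-i\dot\theta(\uu+w)$, and $\dot\theta=O(\|w\|)$ is fixed by preserving $\Im(w,\uu)=0$. We then take the real inner product with $w$. The $\uu$-direction terms contribute only $O(\|w\|^3)$ since $\langle\uu,w\rangle=O(\|w\|^2)$. The $i\uu$-direction terms are killed because $\langle i\uu,w\rangle=0$ and $\langle iw,w\rangle=0$. The result is
\[
\tfrac12\tfrac{\dd}{\dd t}\|w\|^2 = -\langle\mathcal{H}w,w\rangle+O(\|w\|^3) \le -\langle \mathcal{H}v,v\rangle + O(\|w\|^3),
\]
where we also used $\|w-v\|=O(\|w\|^2)$. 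The definition \eqref{eq:lambda} of $\lambda$ then yields
\[
\tfrac{\dd}{\dd t}\|w\|^2\le -2\lambda\|w\|^2 + K\|w\|^3 .
\]
Here $\lambda>0$ follows from Assumption~\ref{ass:quasi_coercivity} combined with the discrete Poincaré inequality $\|v\|_{L^2}\le C|v|_{1,2}$, or directly from finite-dimensionality.

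\textbf{Trapping and conclusion.} Fix $\varepsilon>0$ and choose $r>0$ so that $Kr\le 2\varepsilon$ and the phase alignment is well defined on $\{\|w\|<r\}$. On this set the inequality gives $\tfrac{\dd}{\dd t}\|w\|^2\le-2(\lambda-\varepsilon)\|w\|^2$. A continuity (bootstrap) argument then shows that if $\|w(0)\|<r$, the trajectory never leaves the ball, and $\|w(t)\|\le e^{-(\lambda-\varepsilon)t}\|w(0)\|$ for all $t\ge 0$. On $X(\mathcal{M})$ all norms are equivalent, so there is $\delta>0$ such that $\inf_\theta|u(0)-e^{i\theta}\uu|_{1,2}<\delta$ implies $\|w(0)\|<r$. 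This proves the claim, with a constant $C$ that can be taken equal to $1$ or absorbs any norm-equivalence constants.

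\textbf{Main obstacle.} The delicate step is the bookkeeping in the energy estimate. One must check that the Lagrange-multiplier contribution and the phase-drift term $\dot\theta$ produce no first-order terms in $\langle\partial_t w,w\rangle$, and that the normal component $\alpha\uu$ of $w$ is genuinely quadratic. Only then does the linear part reduce exactly to $\langle\mathcal{H}v,v\rangle$ on $T^\C_{\uu}\mathbb{S}_h$. We would first handle this in the rotated frame using the identity $\mathcal{H}(i\uu)=0$, differentiating the constraint $\Im(w,\uu)=0$ to obtain $\dot\theta$.
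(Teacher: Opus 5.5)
Your proof is correct on a fixed mesh, but it is organized differently from the paper's.

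\emph{Choice of error variable.} You work in the optimally rotated frame $w=e^{-i\theta}u-\uu$, with the explicit phase $e^{i\theta}=(u,\uu)/|(u,\uu)|$. This is defined whenever $d_0(u,\uu)<\sqrt{2}$, and it is the same angle as in Lemma~\ref{lemma:existence_theta}, obtained without the implicit-function and patching construction. It gives $\|w\|_{L^2}=d_0(u,\uu)$ exactly, hence $C=1$. The price is that $w$ lies in $T^{\C}_{\uu}\mathbb{S}_h$ only up to its component $-\frac12\|w\|_{L^2}^2\uu$, and that $\theta(t)$ must be differentiated. The latter is harmless, since $-i\dot\theta(\uu+w)$ is exactly orthogonal to $w$. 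The paper instead uses the phase-independent projection $v=P^{\C}_{\uu}u$. It lies exactly in $T^{\C}_{\uu}\mathbb{S}_h$ and satisfies $\partial_t v=P^{\C}_{\uu}\partial_t u$, so no $\dot\theta$ ever appears, and $d_0$ is recovered up to the constants of Lemma~\ref{lemma:projection_forumla}.

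\emph{Trapping step (the more substantial difference).} You bound every remainder by $\|w\|_{L^2}^3$ through equivalence of norms on $X(\mathcal{M})$. You then close a direct bootstrap in $L^2$, taking $\varepsilon<\lambda$ without loss of generality so that the ball is forward invariant. The paper only has remainders of size $\mathcal{O}(|v|_{1,2}^2)$ in $L^2$. Their constants come from discrete Sobolev inequalities and are uniform over $\Xi$-regular meshes. It therefore needs an a priori bound on $|v(t)|_{1,2}$, which it obtains from energy coercivity (Proposition~\ref{pr:coercivity_energy}) combined with energy decay along \eqref{eq:gradient_flow} (Proposition~\ref{pr:bounded_A_norm}).

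\emph{What each route buys.} Your route is shorter and fully sufficient for the theorem as stated, where $C$ and $\delta$ may depend on the mesh. However, the constant in your cubic remainder hides inverse inequalities such as $\|w\|_{L^\infty}\le(\min_{K\in\mathcal{F}}|K|)^{-1/2}\|w\|_{L^2}$. Consequently your $\delta$ degenerates as $h\to0$. It cannot support the uniformity in $n$ claimed after Proposition~\ref{th:spectral_gap_convergence}, and that uniformity is precisely what the paper's energy argument is designed to deliver.
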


Note that $\lambda$ depends implicitly on the mesh, implying in practice that the convergence rate may become degenerate in the limit $h \to 0$. However, we will see in Section~\ref{sec:spectral_gap} that, if $(u_n^*)_{n \in \N}$ is a sequence of discrete minimizers converging to a continuous minimizer $\phi^*$ as in Theorem~\ref{th:cv_ground_states}, the associated discrete convergence rates $\lambda_n$ converge to the continuous spectral gap $\lambda$. In particular, if the limiting ground-state is quasi-isolated, the discrete convergence rates may be bounded uniformly in $n$. However, if the limiting $\lambda$ is equal to zero, the convergence becomes degenerate in the limit. This may in particular be the case for axisymmetric potentials, as will be illustrated in Section~\ref{sec:numerics}.

To keep the presentation clear and concise, we restrict ourselves to the case of a continuous flow in time, but we believe that our local convergence analysis can be adapted using recent techniques from \cite{FengTang2025}, which shows in particular the (discrete-in-time) decay of the energy of the usual scheme of \cite{BaoDu2004} that we adopt in our numerical simulations.  We also emphasize that our analysis for proving Theorem \ref{th:exp_convergence} can be adapted to other consistent discretizations such as finite differences.

\section{Discrete functional analysis} \label{sec:discrete_function_analysis}

\subsection{Interpolants}
We introduce both the pointwise interpolant $P_h : \mathcal{C}^0\left(\overline{\mathcal{D}}\right) \rightarrow X(\mathcal{M})$ and the mean interpolant $\Pi_h : L^p(\mathcal{D}) \rightarrow X(\mathcal{M})$ for $p \geq 1$, respectively defined by
\[ (P_h \varphi)_K = \varphi(x_K) \quad \text{and} \quad (\Pi_h \varphi)_K = \frac{1}{|K|}\int_{K} \varphi(x) \dd x  \]
for any $K \in \mathcal{F}$. Similarly, we can define the same interpolants on the diamond mesh $\mathcal{M}^*$, namely $P_h^\diamond : \mathcal{C}^0(\overline{\mathcal{D}}) \rightarrow X(\mathcal{M}^*)$ and $\Pi_h^\diamond : L^p(\mathcal{D}) \rightarrow X(\mathcal{M}^*)$, by the formulas
\[ (P_h^\diamond \varphi)_\sigma = \varphi(x_\sigma) \quad \text{and} \quad (\Pi_h^\diamond \varphi)_\sigma = \frac{d}{|\sigma|d_\sigma}\int_{\mathcal{V}_\sigma} \varphi(x) \dd x  \]
for any $\sigma \in \mathcal{E}$. Note that $\Pi_h$ and $\Pi_h^\diamond$ are the $L^2$ orthogonal projections on $X(\mathcal{M})$ and $X(\mathcal{M}^*)$ respectively. Also, from Sobolev injections, we know that $H^s(\mathcal{D})$ and $W^{1, p}(\mathcal{D})$ continuously inject into $\mathcal{C}^0(\overline{\mathcal{D}})$ for $s>d/2$ and $p > d$, so that $P_h$ and $P_h^\diamond$ are well defined on these spaces. We compare the interpolants  $\Pi_h$ and $P_h$ in the following proposition, which is an improvement of \cite[Lemma 3.5]{Chauleur2026}.

\begin{proposition}\label{pr:diff_Pih_Ph}
  For all $p > d$, there exists $C_p > 0$ such that, for all admissible meshes~$\mathcal{M}$ and all $\varphi \in W^{1, p}(\mathcal{D})$, we have
  $$
  \|\Pi_h \varphi - P_h \varphi\|_{L^p} \leq C_p h \|\nabla \varphi\|_{L^p}.
  $$
\end{proposition}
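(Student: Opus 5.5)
The estimate is local, so I will reduce it to a single cell with a uniform constant and then sum. Write
$$
\|\Pi_h \varphi - P_h \varphi\|_{L^p}^p = \sum_{K \in \mathcal{F}} |K| \, \left|\frac{1}{|K|}\int_K \bigl(\varphi(x) - \varphi(x_K)\bigr)\dd x\right|^p \leq \sum_{K\in\mathcal{F}} \int_K |\varphi(x)-\varphi(x_K)|^p \dd x
$$
by Jensen's inequality. It then suffices to prove, for each convex cell $K$ and each $x_K \in K$,
$$
\int_K |\varphi(x)-\varphi(x_K)|^p\dd x \leq C_p^p\, \diam(K)^p \int_K |\nabla\varphi|^p,
$$
where $C_p$ depends only on $p$ and $d$, and not on the shape of $K$. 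No mesh regularity assumption is available, so this point is essential. Summing over $K$ and using $\diam(K)\le h$ then gives the claim. By density of smooth functions in $W^{1,p}(K)$ and continuity of point evaluation, which holds because $p>d$, I may take $\varphi$ smooth on $\bar K$.

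For the cellwise bound I will use the Morrey/Gilbarg--Trudinger potential estimate on convex sets. For $x, y\in K$, convexity gives $\varphi(x)-\varphi(y) = \int_0^1 \nabla\varphi(y+t(x-y))\cdot(x-y)\dd t$. Integrating in $y$ over $K$ and changing variables along rays yields the classical inequality (Gilbarg--Trudinger, Lemma 7.16)
$$
|\varphi(x) - \varphi_K| \leq \frac{\diam(K)^d}{d\,|K|}\int_K \frac{|\nabla \varphi(z)|}{|x-z|^{d-1}}\dd z ,
$$
with $\varphi_K$ the mean of $\varphi$ over $K$. The factor $\diam(K)^d/|K|$ is shape dependent, and this is where the main difficulty lies: flat cells are allowed. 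To avoid that factor, I will instead compare $\varphi(x)$ and $\varphi(x_K)$ directly along the segment $[x_K,x]\subset K$. This gives
$$
|\varphi(x)-\varphi(x_K)|\le |x-x_K|\int_0^1|\nabla\varphi(x_K+t(x-x_K))|\dd t .
$$
Raising to the power $p$, using Hölder in $t$, integrating over $x\in K$ and substituting $z=x_K+t(x-x_K)$ (Jacobian $t^{-d}$, with $z$ ranging over $x_K + t(K - x_K)\subset K$ by convexity), I get
$$
\int_K|\varphi(x)-\varphi(x_K)|^p\dd x\le \diam(K)^p\int_0^1 t^{-d}\dd t\int_K|\nabla\varphi|^p .
$$
This is shape independent but diverges at $t=0$, which is exactly where pointwise evaluation needs $p>d$.

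To fix the divergence I will split $\varphi(x)-\varphi(x_K)$ as $\bigl(\varphi(x)-\varphi(y)\bigr)+\bigl(\varphi(y)-\varphi(x_K)\bigr)$ and average over $y$ in the same convex set. On the $x_K$ side I will use the radial change of variables around $x_K$ in polar form. This gives a kernel $|z-x_K|^{1-d}$ weighted by the ratio of ray lengths, which I expect to bound by $\diam(K)$ without any volume factor. Hölder with exponent $p>d$ then makes $\bigl(\int_K |z-x_K|^{(1-d)p'}\bigr)^{1/p'}$ finite. I still need to check carefully that the resulting constant scales as $\diam(K)^{1-d/p}\,|K|^{1/p}$ times a constant depending only on $p$ and $d$. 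That scaling would give exactly $\diam(K)^p\int_K|\nabla\varphi|^p$ after multiplying by $|K|$. If this shape-uniform radial estimate fails, the fallback is to accept a dependence on the mesh regularity through $\diam(K)^d/|K|$. I expect this shape-uniform Morrey estimate around an arbitrary point $x_K$ of a possibly degenerate convex cell to be the main obstacle.
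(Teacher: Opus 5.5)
There is a genuine gap. The step that carries the whole statement is a bound whose constant does not depend on the shape of $K$, and it is never established. You leave it as something to ``check carefully''. Your fallback, a constant involving $\diam(K)^d/|K|$, would only prove a weaker result, since the proposition is claimed for \emph{all} admissible meshes with no regularity assumption.

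Your Jensen step also makes the problem harder rather than easier. The quantity to control is only $|K|\,|\varphi_K-\varphi(x_K)|^p$ with $\varphi_K=(\Pi_h\varphi)_K$, whereas $\int_K|\varphi-\varphi(x_K)|^p$ is larger. Your splitting then returns you to $\varphi_K-\varphi(x_K)$, plus a Poincar\'e term $\int_K|\varphi-\varphi_K|^p$ that must also be bounded shape-uniformly, which Gilbarg--Trudinger 7.16 does not do, as you noticed.

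The scaling you plan to verify, $\diam(K)^{1-d/p}|K|^{1/p}$, is not the one you need. The right target is
\[
|\varphi_K-\varphi(x_K)|\le C_p\,\diam(K)\,|K|^{-1/p}\,\|\nabla\varphi\|_{L^p(K)} .
\]
A Morrey-type bound $|\varphi(x)-\varphi(x_K)|\le C\,\diam(K)^{1-d/p}\|\nabla\varphi\|_{L^p(K)}$ cannot hold uniformly in the shape. Take $K=(0,1)\times(0,\epsilon)$ and $\varphi(x)=x_1$: the left side can be $1/2$ while the right side is of order $\epsilon^{1/p}$.

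The divergence at $t=0$ comes from applying H\"older in $t$ before integrating in space. That discards the fact that $K_t=x_K+t(K-x_K)\subset K$ has measure $t^d|K|$. The paper works with the mean directly:
\[
\varphi_K-\varphi(x_K)=\frac{1}{|K|}\int_0^1\int_K\nabla\varphi(x_K+t(y-x_K))\cdot(y-x_K)\,\dd y\,\dd t .
\]
It substitutes $z=x_K+t(y-x_K)$, with Jacobian $t^{-d}$, and applies H\"older in $z$ on $K_t$:
\[
\int_{K_t}|\nabla\varphi|\le (t^d|K|)^{1-1/p}\|\nabla\varphi\|_{L^p(K)} .
\]
The powers of $t$ combine into $\int_0^1 t^{-d/p}\,\dd t$, which is finite exactly when $p>d$, and this gives the displayed bound. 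Raising it to the power $p$, multiplying by $|K|$ and summing over $K$ concludes, with no Poincar\'e inequality and no shape dependence.

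Alternatively, your own segment estimate can be repaired with a weighted H\"older inequality in $t$. With $1/p+1/p'=1$,
\[
\int_0^1 f\,\dd t\le\Bigl(\int_0^1 t^{-ap'}\,\dd t\Bigr)^{1/p'}\Bigl(\int_0^1 t^{ap}f^p\,\dd t\Bigr)^{1/p},
\qquad \frac{d-1}{p}<a<1-\frac1p ,
\]
and this range of $a$ is nonempty precisely when $p>d$. After integrating in $x$ you obtain $\int_0^1 t^{ap-d}\,\dd t<\infty$. This yields $\int_K|\varphi-\varphi(x_K)|^p\le C_{p,d}\,\diam(K)^p\|\nabla\varphi\|_{L^p(K)}^p$ directly.
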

\begin{proof}
 For any $K \in \mathcal{M}$, we write from Taylor expansion that
  \begin{align*}
    \left|\frac{1}{|K|}\int_{K}\varphi(y) \dd y - \varphi(x_K)\right| &= \left|\frac{1}{|K|}\int_{0}^{1} \int_K \nabla \varphi(ty + (1-t)x_K) \cdot (y - x_K) \dd y \dd t\right|,
    \\
    &\leq \frac{\diam(K)}{|K|}\int_{0}^{1} \int_K |\nabla \varphi(ty + (1-t)x_K)| \dd y \dd t.
  \end{align*}
  Defining $z = t y + (1-t)x_K$, we have $\dd z = t^d \dd y$, so from such change of variable,
  $$
  \int_K |\nabla \varphi(ty + (1-t)x_K)| \dd y = \frac{1}{t^d}\int_{K_t} |\nabla \varphi(z)| \dd z,
  $$
   for any $t \in (0,1]$, where $K_t \coloneqq t K + (1-t)x_K$. We also note that $|K_t| = t^d |K|$ so that, using Hölder's inequality, and as $K_t \subset K$ since $K$ is convex,
  \begin{align*}
    \left|(\Pi_h \varphi)_K - (P_h \varphi)_K\right| &\leq \frac{\diam(K)}{|K|} \int_{0}^{1} \frac{1}{t^d} \int_{K_t} |\nabla \varphi(z)| \dd z \dd t
    \\
    &\leq \frac{\diam(K)}{|K|}\int_0^1 \frac{|K_t|^{1-1/p}}{t^d} \|\nabla \varphi\|_{L^p(K_t)} \dd t
    \\ 
    &\leq \diam(K)|K|^{-1/p} \|\nabla \varphi\|_{L^p(K)} \int_{0}^{1} t^{-d/p} \dd t 
    \\
    &\leq C_p \diam(K) |K|^{-1/p} \|\nabla \varphi\|_{L^p(K)}
  \end{align*}
  since $p>d$. Taking this inequality to the power $p$, multiplying by $|K|$ and summing over $K \in \mathcal{F}$ gives the desired result.
\end{proof}

We also state and prove convergence properties of such interpolants.

\begin{proposition}\label{pr:approx_interpolants}
  For all $p \geq 1$, there exists $C_p > 0$ such that, for all admissible meshes $\mathcal{M}$ and $\varphi \in W^{1, p}(\mathcal{D})$, we have 
  $$
  \|\Pi_h \varphi - \varphi\|_{L^p} \leq C_p h \|\nabla \varphi\|_{L^p}.
  $$
  Moreover, if $p > d$, there also exists $C_p > 0$ such that, for all admissible meshes $\mathcal{M}$
  $$
  \|P_h \varphi - \varphi \|_{L^p} \leq C_p h \|\nabla \varphi\|_{L^p}.
  $$
\end{proposition}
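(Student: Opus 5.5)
The first estimate is a cellwise Poincaré-type inequality for the mean over a convex set. The second estimate then follows from the first combined with Proposition~\ref{pr:diff_Pih_Ph}. In both cases we first prove a local bound on each cell $K \in \mathcal{F}$, raise it to the power $p$, and sum over the cells.

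For the mean interpolant, fix $K$ and $x \in K$. We write
\[
\varphi(x) - (\Pi_h\varphi)_K = \frac{1}{|K|}\int_K \big(\varphi(x)-\varphi(y)\big)\,\dd y = \frac{1}{|K|}\int_K\int_0^1 \nabla\varphi\big(y+s(x-y)\big)\cdot(x-y)\,\dd s\,\dd y .
\]
This is valid first for smooth $\varphi$ and then for all $\varphi\in W^{1,p}(\mathcal{D})$ by density of $\mathcal{C}^\infty(\overline{\mathcal{D}})$ in $W^{1,p}(\mathcal{D})$, since $\mathcal{D}$ is Lipschitz. We then take the $L^p(K)$ norm in $x$ and apply Jensen/Hölder to the averaged integral, using $|x-y|\le \diam(K)$. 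This gives
\[
\int_K|\varphi-(\Pi_h\varphi)_K|^p \le \frac{\diam(K)^p}{|K|}\int_0^1\int_K\int_K |\nabla\varphi(y+s(x-y))|^p\,\dd x\,\dd y\,\dd s .
\]
To control the triple integral, we split the $s$-integral at $s=1/2$. For $s\ge 1/2$ we change variables $z=y+s(x-y)$ in $x$, with $\dd z=s^d\,\dd x$, and $z$ stays in $K$ by convexity. The $x$-integral is then at most $2^d\|\nabla\varphi\|_{L^p(K)}^p$, and the $y$-integral contributes a factor $|K|$. For $s<1/2$ we do the symmetric substitution in $y$, with Jacobian $(1-s)^d\ge 2^{-d}$. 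Altogether we get $\|\varphi-\Pi_h\varphi\|_{L^p(K)}\le 2^{d/p}\diam(K)\|\nabla\varphi\|_{L^p(K)}$, with a constant independent of the shape of $K$. Summing over $K$ and using $\diam(K)\le h$ yields the first claim; the case $p=1$ works the same way.

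For the point interpolant with $p>d$, we use the triangle inequality:
\[
\|P_h\varphi-\varphi\|_{L^p}\le \|P_h\varphi-\Pi_h\varphi\|_{L^p}+\|\Pi_h\varphi-\varphi\|_{L^p}.
\]
The first term is bounded by Proposition~\ref{pr:diff_Pih_Ph} and the second by the estimate just proved. No mesh regularity is needed, which is consistent with the statement holding for all admissible meshes.

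The main obstacle is the first estimate: getting a constant that is independent of the cell shape, without any regularity assumption $\xi_h\ge\Xi$. Standard Poincaré–Wirtinger constants on convex sets are proportional to the diameter with a universal constant (Payne–Weinberger type). The change-of-variables argument above, with the split of $s$ at $1/2$ to keep the Jacobians bounded below, gives a self-contained proof of this fact.
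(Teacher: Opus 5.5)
Your proof is correct and has the same structure as the paper's. For $\Pi_h$ you use a cellwise Poincaré--Steklov inequality on each convex cell, with a constant proportional to $\diam(K)$, and sum over $K$; for $P_h$ you combine that estimate with Proposition~\ref{pr:diff_Pih_Ph} through the triangle inequality.

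The only difference is that the paper cites the Poincaré--Steklov inequality (Ern--Guermond, Lemma 3.24), whereas you prove it directly. Your change of variables with the split at $s=1/2$ gives the explicit constant $2^{d/p}$, which does not depend on the shape of the cell.
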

\begin{proof}
  For any $K \in \mathcal{F}$, the Poincaré--Steklov inequality (see for example \cite[Lemma 3.24]{ErnGuermond2021}) provides 
  $$
  \|\Pi_h \varphi - \varphi\|_{L^p(K)}^p \leq C_p \diam(K)^p \|\nabla \varphi\|_{L^p(K)}^p,
  $$
  and summing over $K\in \mathcal{F}$ gives the first estimate. The second estimate immediately follows from the first one along Proposition \ref{pr:diff_Pih_Ph}.
\end{proof}

In the spirit of the proof of Proposition \ref{pr:diff_Pih_Ph}, the following lemma provides a sharp estimate of Taylor reminders on diamond cells, which will prove to be useful throughout the paper.

\begin{lemma}\label{lemma:diamond_integration_lemma}
  Let $K \in \mathcal{F}$ and $\sigma \in \mathcal{E}_K$, then the following estimate holds
  $$
  \left|\int_{\sigma}\int_{0}^{1}t^r \varphi(tx + (1-t)x_K) \dd t \dd x\right| \leq C_{p,r} \frac{|\sigma|^{1-1/p}}{d_{\sigma, K}^{1/p}} \|\varphi\|_{L^p(\mathcal{V}_{\sigma, K})},
  $$
  for any $\varphi$ in $L^p(\mathcal{V}_{\sigma, K})$, $r\in \R$ and $p\geq 1$ such that $p>\frac{d}{r+1}$.
\end{lemma}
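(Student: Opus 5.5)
The plan is to use the change of variables $(t,x)\mapsto z = tx+(1-t)x_K$, which maps $(0,1)\times\sigma$ onto the half-diamond $\mathcal{V}_{\sigma,K}$, and then apply Hölder's inequality, exactly as in the proof of Proposition~\ref{pr:diff_Pih_Ph}. The first step is to compute the Jacobian of this map. Decompose $x - x_K = d_{\sigma,K} n_{\sigma,K} + (x - x_\perp)$, where $x_\perp$ is the orthogonal projection of $x_K$ on the hyperplane of $\sigma$. The map $\Phi(t,x)$ then has Jacobian determinant $t^{d-1} d_{\sigma,K}$: it is a dilation by $t$ in the $d-1$ tangential directions, and the $t$-derivative has normal component $d_{\sigma,K}$. (If $x_K$ does not project inside $\sigma$ this is still correct, since only the normal component of $\partial_t\Phi$ matters.) Consequently, for nonnegative $g$,
$$
\int_\sigma\int_0^1 g(t)\,\varphi(\Phi(t,x))\,\dd t\,\dd x = \int_{\mathcal{V}_{\sigma,K}} \frac{g(t(z))}{t(z)^{d-1} d_{\sigma,K}}\,|\varphi(z)|\,\dd z,
$$
where $t(z)$ is the normalized distance of $z$ to $x_K$ along the normal direction, i.e. $t(z)=\mathrm{dist}(z,\text{plane through }x_K\parallel\sigma)/d_{\sigma,K}$.

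The second step applies this with $g(t)=t^r$, bounds the left-hand side of the lemma by the integral of $|\varphi|$, and then uses Hölder's inequality:
$$
\left|\int_\sigma\int_0^1 t^r\varphi\right|\leq \frac{1}{d_{\sigma,K}}\Big\|t(\cdot)^{r-d+1}\Big\|_{L^{p'}(\mathcal{V}_{\sigma,K})}\|\varphi\|_{L^p(\mathcal{V}_{\sigma,K})}.
$$
The remaining $L^{p'}$ norm is computed by changing variables back. It equals
$$
\left(\int_\sigma\int_0^1 t^{(r-d+1)p'}\, t^{d-1}d_{\sigma,K}\,\dd t\,\dd x\right)^{1/p'} = \left(|\sigma|\,d_{\sigma,K}\int_0^1 t^{(r-d+1)p'+d-1}\dd t\right)^{1/p'}.
$$
This is finite precisely when $(r-d+1)p'+d>0$, that is $r+1-d+d/p'>0$, or equivalently $r+1>d/p$; this is exactly the hypothesis $p>d/(r+1)$. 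The result is $C_{p,r}\,(|\sigma| d_{\sigma,K})^{1-1/p}$, and multiplying by $d_{\sigma,K}^{-1}$ gives $|\sigma|^{1-1/p}d_{\sigma,K}^{-1/p}$, as claimed.

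The case $p=1$ ($p'=\infty$) needs $r\ge d-1$ and is handled the same way with a supremum in place of the $L^{p'}$ norm. I expect the main obstacle to be the careful justification of the Jacobian formula $t^{d-1}d_{\sigma,K}$ and of the injectivity of $\Phi$, which holds because $x_K\notin$ the hyperplane of $\sigma$ and the cone is star-shaped. The exponent bookkeeping is otherwise routine.
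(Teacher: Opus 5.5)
Your proposal is correct and follows essentially the same route as the paper. Both use the change of variables $z = tx+(1-t)x_K$ with Jacobian $t^{d-1}d_{\sigma,K}$ (the paper does this in coordinates with $\sigma\subset\{0\}\times\R^{d-1}$ and $x_K=(-d_{\sigma,K},0)$), then Hölder's inequality, then evaluate $\|t(\cdot)^{r-d+1}\|_{L^{p'}}$ by changing variables back. Two small points: your first display has $\varphi$ on the left and $|\varphi|$ on the right, and the condition $r+1>d/p$ you correctly derive is equivalent to the stated hypothesis $p>d/(r+1)$ only when $r>-1$. The paper makes the same implicit assumption, which is harmless since only $r=0$ is used.
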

\begin{proof}
  Up to rotations and translations, we may assume that $\sigma = \{0\} \times \tilde{\sigma}$ is included in $\{0\} \times \R^{d-1}$ and that $x_K = (-\alpha, 0_{d-1})$ with $\alpha>0$, as in the proof of \cite[Theorem 9.4]{EymardGallouetHerbin2000}. As such, we have $d_{\sigma, K} = \alpha$. We now define the change of variables $z(t, x) = ((t-1)\alpha, tx)$, which is a diffeomorphism from $(0, 1) \times \tilde{\sigma}$ to $\mathcal{V}_{\sigma, K}$. We then compute that, for $z=(z_1,\cdots,z_d)$ in $\mathcal{V}_{\sigma, K}$, $t = 1 + z_1/\alpha$ and 
\[    \dd z = t^{d-1}\alpha \dd t \dd x = \alpha\left(1 + \frac{z_1}{\alpha}\right)^{d-1} \dd t \dd x. \]
  We therefore write that 
  $$
  \int_{\sigma}\int_{0}^{1} t^r \varphi(tx + (1-t)x_K) \dd t \dd x = \frac{1}{\alpha}\int_{\mathcal{V}_{\sigma, K}} \left(1 + \frac{z_1}{\alpha}\right)^{r-d+1} \varphi(z) \dd z.
  $$
  Using Hölder's inequality with $\frac{1}{p}+\frac{1}{q}=1$, we obtain
  \begin{align*}
    \left|\int_{\sigma}\int_{0}^{1} t^r \varphi(tx + (1-t)x_K) \dd t \dd x\right| &\leq \frac{1}{\alpha}\left(\int_{\mathcal{V}_{\sigma, K}} \left(1 + \frac{z_1}{\alpha}\right)^{q(r-d+1)} \dd z \right)^{1/q} \|\varphi\|_{L^p(\mathcal{V}_{\sigma, K})}
    \\
    &\leq \frac{1}{\alpha }\left(\int_{\sigma}\int_{0}^{1} t^{q(r-d+1)}\alpha t^{d-1} \dd t \dd x \right)^{1/q} \|\varphi\|_{L^p(\mathcal{V}_{\sigma, K})},
  \end{align*}
  where we have used the change of variables in the opposite direction. We then compute 
  $$
  \int_{\sigma}\int_{0}^{1} t^{q(r-d+1)}\alpha t^{d-1} \dd t \dd x = \frac{|\sigma|\alpha}{d - q(d-1-r)},
  $$
  whenever $q(d-1-r) < d$. Recalling that $\frac{1}{p} + \frac{1}{q} = 1$, such condition is equivalent to $p > \frac{d}{r+1}$ from elementary algebraic manipulations.
\end{proof}

We now provide continuity estimates for the aforementioned interpolants.

\begin{proposition}\label{pr:12_boundedness_Ph}
 For all $\Xi \in (0, 1]$ and $p>d$, there exists $C_{p, \Xi} > 0$ such that, for all $\Xi$-regular admissible meshes and $\varphi \in W_0^{1,p}(\mathcal{D})$, we have
  $$
  |P_h \varphi|_{1, p} \leq C_{p, \Xi} \|\nabla \varphi\|_{L^p}.
  $$
\end{proposition}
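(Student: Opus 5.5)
The plan is to bound each edge contribution $\frac{|\sigma|}{d_\sigma^{p-1}}|D_\sigma P_h\varphi|^p$ by a constant times $\|\nabla\varphi\|_{L^p(\mathcal{V}_\sigma)}^p$. Summing over $\sigma\in\mathcal{E}$ then gives the result, since the diamond cells $\mathcal{V}_\sigma$ are disjoint. We may assume $\varphi$ is smooth, say $\varphi\in\mathcal{C}^1(\overline{\mathcal{D}})$ vanishing on $\partial\mathcal{D}$. The general case follows by density in $W^{1,p}_0(\mathcal{D})$, because $P_h$ is continuous for the $W^{1,p}$ norm when $p>d$, by the Sobolev embedding into $\mathcal{C}^0$.

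For an interior edge $\sigma=K|L$, we split through the face center:
\[
D_\sigma P_h\varphi=\bigl(\varphi(x_L)-\varphi(x_\sigma)\bigr)-\bigl(\varphi(x_K)-\varphi(x_\sigma)\bigr).
\]
Passing through a single point of $\sigma$ is not enough. Instead we average over $\sigma$:
\[
\varphi(x_K)-\varphi(x_L)=\frac{1}{|\sigma|}\int_\sigma\bigl[(\varphi(x_K)-\varphi(x))-(\varphi(x_L)-\varphi(x))\bigr]\dd x .
\]
We write each term by Taylor's formula as
\[
\varphi(x)-\varphi(x_K)=\int_0^1\nabla\varphi(tx+(1-t)x_K)\cdot(x-x_K)\dd t .
\]
Since $|x-x_K|\le\diam(K)$, we get
\[
\Bigl|\frac{1}{|\sigma|}\int_\sigma(\varphi(x)-\varphi(x_K))\dd x\Bigr|\le\frac{\diam(K)}{|\sigma|}\int_\sigma\int_0^1|\nabla\varphi(tx+(1-t)x_K)|\dd t\dd x .
\]
Lemma~\ref{lemma:diamond_integration_lemma} with $r=0$ applies because $p>d$. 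It bounds this by
\[
C_p\,\frac{\diam(K)}{|\sigma|^{1/p}d_{\sigma,K}^{1/p}}\,\|\nabla\varphi\|_{L^p(\mathcal{V}_{\sigma,K})}.
\]
The same bound holds with $L$ in place of $K$. For an exterior edge, $\varphi=0$ on $\sigma$, so $D_\sigma P_h\varphi=-\varphi(x_K)=\frac{1}{|\sigma|}\int_\sigma(\varphi(x)-\varphi(x_K))\dd x$, and only the $K$ term is present.

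We now use $\Xi$-regularity. It gives $\diam(K)\le\Xi^{-1}d_{\sigma,K}$ and $d_{\sigma,K}\ge\Xi d_\sigma$, and also $d_{\sigma,K}\le d_\sigma$. Hence
\[
|D_\sigma P_h\varphi|\le C_{p,\Xi}\,\frac{d_\sigma^{1-1/p}}{|\sigma|^{1/p}}\,\|\nabla\varphi\|_{L^p(\mathcal{V}_\sigma)}.
\]
Raising this to the power $p$ gives $|D_\sigma P_h\varphi|^p\le C\,d_\sigma^{p-1}|\sigma|^{-1}\|\nabla\varphi\|^p_{L^p(\mathcal{V}_\sigma)}$. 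Multiplying by $|\sigma|/d_\sigma^{p-1}$ leaves exactly $C\|\nabla\varphi\|^p_{L^p(\mathcal{V}_\sigma)}$, and summing over $\sigma$ concludes the proof.

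The main obstacle is controlling a point value difference by a local $L^p$ norm of the gradient with the correct scaling in $|\sigma|$ and $d_\sigma$. Averaging over the face and applying Lemma~\ref{lemma:diamond_integration_lemma} on each half-diamond $\mathcal{V}_{\sigma,K}$ resolves this, and this is precisely where $p>d$ is needed. A smaller point is checking the boundary case: it uses the vanishing trace together with $\mathcal{V}_\sigma=\mathcal{V}_{\sigma,K}$ for exterior edges.
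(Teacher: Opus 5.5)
Your proof is correct and is essentially the paper's argument. You average the first-order Taylor expansion over $\sigma$, bound each half-diamond term by Lemma~\ref{lemma:diamond_integration_lemma} with $r=0$ (which is where $p>d$ enters), handle exterior edges via the vanishing trace, and sum over the disjoint diamonds $\mathcal{V}_\sigma$. Your local use of regularity, $\mathrm{diam}(K)\le \Xi^{-1}d_{\sigma,K}\le \Xi^{-1}d_\sigma$, and your preliminary density reduction are slightly cleaner than the paper, which instead passes through a global factor $h^{p-1}$ and applies Taylor's formula pointwise to $W^{1,p}_0$ functions directly.
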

\begin{proof}
  Let $K \in \mathcal{M}$, $\sigma \in \mathcal{E}_K$ and $x \in \sigma$. We start by assuming that $\sigma = K|L \in \mathcal{E}_{\mathrm{int}}$. From the Taylor expansion, we obtain
\begin{equation} \label{eq:Taylor_first_order}
 \varphi(x_K) - \varphi(x) = \int_{0}^{1} \nabla \varphi(tx_K + (1-t)x) \cdot (x_K - x) \dd t .
\end{equation} 
  Taking the average on $x \in \sigma$ and the difference with the same equation on the adjacent cell $L$, we obtain $\varphi(x_K) - \varphi(x_L) = (I_{\sigma, K} - I_{\sigma, L})/|\sigma|$, where 
  $$
  I_{\sigma, K} \coloneqq \int_\sigma \int_{0}^{1} \nabla \varphi(tx_K + (1-t)x) \cdot (x_K - x) \dd t \dd x.
  $$
  From Lemma \ref{lemma:diamond_integration_lemma}, we have
\[    |I_{\sigma, K}| \leq \diam(K) \int_\sigma \int_{0}^{1} |\nabla \varphi(tx_K + (1-t)x)| \dd t \dd x \leq C_p\frac{\diam(K)|\sigma|^{1-1/p}}{d_{ \sigma, K}^{1/p}}\|\nabla \varphi\|_{L^p(\mathcal{V}_{\sigma, K})},\]
therefore using \eqref{eq:mesh_regularity} we infer
\begin{align*}
  |\varphi(x_K)-\varphi(x_L)|^p &\leq \frac{C_p}{|\sigma|^p}\left(\frac{\diam(K)^p|\sigma|^{p-1}}{d_{\sigma, K}}\|\nabla \varphi\|_{L^p(\mathcal{V}_{\sigma, K})}^p + \frac{\diam(L)^p|\sigma|^{p-1}}{d_{\sigma, L}}\|\nabla \varphi\|_{L^p(\mathcal{V}_{\sigma, L})}^p\right),
  \\
  &\leq \frac{C_{p, \Xi}}{|\sigma|}h^{p-1}\|\nabla \varphi\|_{L^p(\mathcal{V}_{\sigma})}^p.
\end{align*}
Using once again the regularity assumption \eqref{eq:mesh_regularity} on $\mathcal{M}$, we obtain 
\begin{equation} \label{eq:estimate_derivative_Lp}
  \frac{|\sigma|}{d_{\sigma}^{p-1}}|D_\sigma P_h \varphi|^p \leq C_{p, \Xi} \|\nabla \varphi\|_{L^p(\mathcal{V}_\sigma)}^p.
\end{equation}
The case $\sigma \in \mathcal{E}_K \cap \mathcal{E}_{\mathrm{ext}}$ is treated similarly: Taylor expansion \eqref{eq:Taylor_first_order} holds with $\varphi(x)=0$ for $x\in\sigma\subset \partial \mathcal D$, so taking the average over $\sigma$, we get that $\varphi(x_K) = I_{\sigma, K}/|\sigma|$, hence an identical reasoning leads to the estimate \eqref{eq:estimate_derivative_Lp}. Taking the sum over all interior and exterior edges, we obtain the desired result as $\cup_{\sigma \in \mathcal{E}} \mathcal{V}_\sigma = \mathcal{D}$ up to a set of measure zero.
\end{proof}

We now prove a stability and convergence result for the interpolation operator on the diamond mesh $I_h$. 

\begin{proposition}\label{pr:l2_stability_I}
  For all $\Xi \in (0, 1]$ and $p \geq 1$, there exists $C_{p, \Xi} > 0$ such that, for all $\Xi$-regular admissible meshes and $u \in X(\mathcal{M})$, we have
\[  \|I_h u\|_{L^p} \leq C_{p, \Xi} \|u\|_{L^p} \quad \text{and} \quad \|I_h u - u\|_{L^p} \leq \frac{h}{d^{1/p}} |u|_{1, p}. \]
\end{proposition}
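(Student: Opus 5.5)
Both estimates reduce to edge-by-edge computations on the diamond mesh. We use $\|v\|_{L^p}^p = \sum_{\sigma} |v_\sigma|^p |\mathcal{V}_\sigma|$ with $|\mathcal{V}_\sigma| = d_\sigma|\sigma|/d$, and split each $\mathcal{V}_\sigma$ into its two halves $\mathcal{V}_{\sigma,K}$ and $\mathcal{V}_{\sigma,L}$.

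\textbf{Stability.} For $\sigma = K|L$, convexity of $t\mapsto |t|^p$ gives
\[
|I_\sigma u|^p \leq \tfrac12\left(|u_K|^p + |u_L|^p\right).
\]
For $\sigma \in \mathcal{E}_{\mathrm{ext}}$ we simply have $|I_\sigma u|^p \leq |u_K|^p$. Hence
\[
\|I_h u\|_{L^p}^p \leq \sum_{K\in\mathcal{F}} |u_K|^p \sum_{\sigma \in \mathcal{E}_K} \frac{|\sigma| d_\sigma}{d}.
\]
The task is then to bound $\sum_{\sigma\in\mathcal{E}_K} |\sigma| d_\sigma/d$ by $C_\Xi |K|$. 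By \eqref{eq:mesh_regularity} we have $d_\sigma \leq d_{\sigma,K}/\xi \leq d_{\sigma,K}/\Xi$. Therefore
\[
\sum_{\sigma\in\mathcal{E}_K}\frac{|\sigma| d_\sigma}{d} \leq \frac1\Xi \sum_{\sigma\in\mathcal{E}_K}\frac{|\sigma| d_{\sigma,K}}{d} = \frac1\Xi\sum_{\sigma\in\mathcal{E}_K}|\mathcal{V}_{\sigma,K}| = \frac{|K|}{\Xi}.
\]
The last equality holds because the half-diamonds $\mathcal{V}_{\sigma,K}$, $\sigma\in\mathcal{E}_K$, partition $K$. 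This partition is valid since $K$ is convex and $x_K \in K$; admissibility is needed here so that $d_{\sigma,K}$ is the actual height of the cone $\mathcal{V}_{\sigma,K}$. This yields $C_{p,\Xi} = \Xi^{-1/p}$.

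\textbf{Approximation.} Since $u$ is piecewise constant on $\mathcal{F}$, write
\[
\|I_h u - u\|_{L^p}^p = \sum_{\sigma}\sum_{K:\sigma\in\mathcal{E}_K} |I_\sigma u - u_K|^p\,|\mathcal{V}_{\sigma,K}|.
\]
For $\sigma = K|L$ we have $I_\sigma u - u_K = \tfrac12 D_{\sigma,K} u$, and symmetrically for $L$. For $\sigma$ exterior, $I_\sigma u - u_K = -\tfrac12 u_K = \tfrac12 D_{\sigma,K}u$. Hence each edge contributes
\[
2^{-p}|D_\sigma u|^p\,\frac{|\sigma|}{d}\,(d_{\sigma,K}+d_{\sigma,L}) = 2^{-p}|D_\sigma u|^p\frac{|\sigma| d_\sigma}{d},
\]
with the same formula holding for exterior edges. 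Rewrite this as
\[
\frac{2^{-p}}{d}\, d_\sigma^{p}\, \frac{|\sigma|}{d_\sigma^{p-1}}|D_\sigma u|^p.
\]
Finally use $d_\sigma \leq 2h$ for interior edges (since $d_{\sigma,K} \leq \diam(K) \leq h$) and $d_\sigma \leq h$ for exterior ones. This gives
\[
\|I_h u-u\|_{L^p}^p \leq \frac{h^p}{d}|u|_{1,p}^p,
\]
which is exactly the claimed constant $h/d^{1/p}$, with no regularity assumption needed.

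The only delicate point is checking that the factor $2^{-p}$ exactly compensates $d_\sigma \leq 2h$. It does, since $(d_\sigma/2)^p \leq h^p$.
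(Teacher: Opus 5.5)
Your proof is correct and follows the paper's own argument. For stability, you bound $|I_\sigma u|^p$ by the cell values, use $d_\sigma \le d_{\sigma,K}/\Xi$ and $\sum_{\sigma\in\mathcal{E}_K}|\sigma|d_{\sigma,K}/d=|K|$; for the approximation, you note that $I_hu-u=\frac12 D_{\sigma,K}u$ on each half-diamond $\mathcal{V}_{\sigma,K}$ and use $d_\sigma\le 2h$, which is exactly what the paper does. The only difference is that you make the stability constant explicit ($C_{p,\Xi}=\Xi^{-1/p}$, via convexity of $t\mapsto|t|^p$), where the paper writes a generic $C_p$.
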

\begin{proof}
The first estimate follows from 
\[ \|I_h u\|_{L^p}^p = \sum_{\sigma \in \mathcal{E}} \frac{|\sigma|d_\sigma}{d} |I_\sigma u|^p \leq C_p \sum_{K \in \mathcal{F}} \sum_{\sigma \in \mathcal{E}_K} \frac{|\sigma|d_\sigma}{d} |u_K|^p \leq C_{p,\Xi} \sum_{K \in \mathcal{M}} \left(\sum_{\sigma \in \mathcal{E}_K} \frac{|\sigma|d_{\sigma, K}}{d}\right)|u_K|^p   \]
which gives the result since $\sum_{\sigma \in \mathcal{E}_K} \frac{|\sigma|d_{\sigma, K}}{d} = |K|$ as $K = \cup_{\sigma \in \mathcal{E}_K} \mathcal{V}_{\sigma, K}$ up to a set of measure zero. For the second estimate, one can remark that for all $K \in \mathcal{F}$ and $\sigma \in \mathcal{E}_K$, $I_h u - u$ is constant on $\mathcal{V}_{\sigma, K}$ and 
  $$
  (I_h u - u)_{|\mathcal{V}_{\sigma, K}} = \frac{1}{2}D_{\sigma, K} u,
  $$
  so that $|I_h u - u| \in X(\mathcal{M}^*)$ with $|I_h u - u|_{\sigma} = \frac{1}{2} |D_{\sigma} u|$.
  Therefore, we have 
\[    \|I_h u - u\|_{L^p}^p = \frac{1}{2^p} \sum_{\sigma \in \mathcal{E}} |D_\sigma u|^p \frac{\sigma d_\sigma}{d}= \frac{1}{d 2^p} \sum_{\sigma \in \mathcal{E}} d_\sigma^p \frac{|\sigma|}{d_\sigma^{p-1}} |D_\sigma u|^p \leq \frac{h^p}{d} |u|_{1, p}^p \]
  as $d_\sigma \leq 2 h$ for all $\sigma \in \mathcal{E}$, which ends the proof.
\end{proof}

Similar properties hold for the interpolants $P_h^\diamond$ and $\Pi_h^\diamond$ defined on the diamond mesh $\mathcal{M}^*$, and we only state those which will be useful later on, the proofs being nearly identical to the previous ones.

\begin{proposition} \label{pr:diff_Pih_Ph_diamond}
For all $p > d$, there exists $C_p>0$ such that, for all admissible meshes and $\varphi \in W^{1, p}(\mathcal{D})$, we have 
\[   \|\Pi_h^\diamond \varphi - P_h^\diamond \varphi\|_{L^p} \leq C_p h \|\nabla \varphi\|_{L^p} \quad \text{and} \quad \|P_h^\diamond \varphi - \varphi \|_{L^p} \leq C_p h \|\nabla \varphi\|_{L^p}.\]
\end{proposition}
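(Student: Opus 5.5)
The plan is to mimic the proofs of Propositions~\ref{pr:diff_Pih_Ph} and \ref{pr:approx_interpolants} cell by cell, with the cells $K$ replaced by the diamond cells $\mathcal{V}_\sigma$ and the points $x_K$ replaced by the face barycenters $x_\sigma$. The difference with the primal mesh is that a diamond cell $\mathcal{V}_\sigma$ is in general not convex, and $x_\sigma$ lies on its boundary rather than in its interior. The natural workaround is to split each diamond into its half-diamonds $\mathcal{V}_{\sigma,K}$ (and $\mathcal{V}_{\sigma,L}$ when $\sigma=K|L$). Each half-diamond is a convex pyramid with apex $x_K$ and base $\sigma$, and it contains $x_\sigma$ in its closure.

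For the first estimate, I fix $\sigma$ and a half-diamond $\mathcal{V}_{\sigma,K}$. Since $\mathcal{V}_{\sigma,K}$ is convex and star-shaped with respect to $x_\sigma\in\overline{\mathcal{V}_{\sigma,K}}$, I can repeat the computation of Proposition~\ref{pr:diff_Pih_Ph} verbatim. Write
\[
\varphi(y)-\varphi(x_\sigma)=\int_0^1\nabla\varphi(ty+(1-t)x_\sigma)\cdot(y-x_\sigma)\,\dd t ,
\]
and use the change of variables $z=ty+(1-t)x_\sigma$, which maps $\mathcal{V}_{\sigma,K}$ into itself with Jacobian $t^d$. Hölder's inequality together with $\int_0^1 t^{-d/p}\,\dd t<\infty$ for $p>d$ then gives
\[
\Bigl|\frac{1}{|\mathcal{V}_{\sigma,K}|}\int_{\mathcal{V}_{\sigma,K}}\varphi-\varphi(x_\sigma)\Bigr|\le C_p h\,|\mathcal{V}_{\sigma,K}|^{-1/p}\|\nabla\varphi\|_{L^p(\mathcal{V}_{\sigma,K})}.
\]
The pointwise evaluation at $x_\sigma$, which lies on the boundary, is legitimate because $W^{1,p}\hookrightarrow\mathcal{C}^0(\overline{\mathcal{D}})$, and density of smooth functions justifies the Taylor formula. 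The diamond average $(\Pi_h^\diamond\varphi)_\sigma$ is the convex combination of the two half-diamond averages with weights $|\mathcal{V}_{\sigma,K}|/|\mathcal{V}_\sigma|$ and $|\mathcal{V}_{\sigma,L}|/|\mathcal{V}_\sigma|$. Hence
\[
|(\Pi_h^\diamond\varphi)_\sigma-\varphi(x_\sigma)|^p|\mathcal{V}_\sigma|\le \sum_{M\in\{K,L\}}\frac{|\mathcal{V}_{\sigma,M}|}{|\mathcal{V}_\sigma|}\,C_p^ph^p\,\frac{|\mathcal{V}_\sigma|}{|\mathcal{V}_{\sigma,M}|}\|\nabla\varphi\|^p_{L^p(\mathcal{V}_{\sigma,M})}.
\]
In the first step I used Jensen's inequality, and the weights cancel exactly, so no mesh regularity is needed. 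Summing over $\sigma\in\mathcal{E}$ yields the first estimate.

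For the second estimate, I apply the Poincaré--Steklov inequality on each convex half-diamond $\mathcal{V}_{\sigma,K}$ to get
\[
\|\varphi-\Pi_{\mathcal{V}_{\sigma,K}}\varphi\|_{L^p(\mathcal{V}_{\sigma,K})}\le C_p h\|\nabla\varphi\|_{L^p(\mathcal{V}_{\sigma,K})},
\]
where $\Pi_{\mathcal{V}_{\sigma,K}}$ denotes the mean over $\mathcal{V}_{\sigma,K}$; this constant is uniform by convexity. The half-diamond bound above also gives
\[
\|\Pi_{\mathcal{V}_{\sigma,K}}\varphi-\varphi(x_\sigma)\|_{L^p(\mathcal{V}_{\sigma,K})}\le C_ph\|\nabla\varphi\|_{L^p(\mathcal{V}_{\sigma,K})}.
\]
Combining these two bounds by the triangle inequality on each half-diamond, and summing, gives $\|P_h^\diamond\varphi-\varphi\|_{L^p}\le C_ph\|\nabla\varphi\|_{L^p}$. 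The main obstacle is the non-convexity of $\mathcal{V}_\sigma$ combined with $x_\sigma$ sitting on its boundary; working on the convex half-diamonds is what resolves it.
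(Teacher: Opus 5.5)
Your proof is correct and follows essentially the argument the paper intends. The paper only says the proofs are nearly identical to those of Propositions~\ref{pr:diff_Pih_Ph} and \ref{pr:approx_interpolants}: a Taylor formula along segments from $x_\sigma$, the dilation $z=ty+(1-t)x_\sigma$ with Jacobian $t^d$, Hölder's inequality with $\int_0^1 t^{-d/p}\,\dd t<\infty$, and Poincaré--Steklov. Your splitting into the convex half-diamonds $\mathcal{V}_{\sigma,K}$ supplies the convexity that this reference tacitly uses on the possibly non-convex $\mathcal{V}_\sigma$, namely that the dilated sets stay inside and that the Poincaré--Steklov constant is uniform. The first estimate would actually go through on $\mathcal{V}_\sigma$ directly, since $\mathcal{V}_\sigma$ is star-shaped with respect to $x_\sigma$. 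One small correction: for interior faces $x_\sigma$ lies in the interior of $\mathcal{V}_\sigma$, not on its boundary; this does not affect your argument.
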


We finally prove some relations between projections on the primal and diamond mesh. 
\begin{proposition}\label{pr:Ph_Ph_diamond}
  For all $p>d$, there exists $C_p>0$ such that, for all admissible meshes and $\varphi \in W^{1, p}_0(\mathcal{D})$, we have 
  $$
  \|I_h P_h \varphi - P_h^\diamond \varphi\|_{L^p} \leq C_{p} h \|\nabla \varphi\|_{L^p}.
  $$
\end{proposition}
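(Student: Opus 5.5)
Both sides of the inequality are piecewise constant on the diamond mesh, so the $L^p$ norm reduces to a sum over edges:
$$
\|I_h P_h \varphi - P_h^\diamond \varphi\|_{L^p}^p = \sum_{\sigma \in \mathcal{E}} |I_\sigma P_h\varphi - \varphi(x_\sigma)|^p \frac{|\sigma| d_\sigma}{d}.
$$
The plan is to bound each diamond contribution by $C_p h^p\|\nabla\varphi\|_{L^p(\mathcal{V}_\sigma)}^p$ and then sum, using that the diamonds $\mathcal{V}_\sigma$ tile $\mathcal{D}$ up to a null set.

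For an interior edge $\sigma=K|L$, I first use $\frac12+\frac12=1$ to write
$$
I_\sigma P_h\varphi - \varphi(x_\sigma) = \tfrac12\big(\varphi(x_K)-\varphi(x_\sigma)\big) + \tfrac12\big(\varphi(x_L)-\varphi(x_\sigma)\big).
$$
I then bound each half by a pointwise estimate on the corresponding half-diamond $\mathcal{V}_{\sigma,K}$. This set is the convex hull of $x_K$ and $\sigma$, so it is convex and has diameter at most $\diam(K)\le h$, and both $x_K$ and $x_\sigma$ belong to its closure.

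For $p>d$ and a convex set $U$, the Morrey-type estimate gives
$$
|\varphi(y)-\varphi(z)| \le C_{p,d}\, \frac{\diam(U)^d}{|U|}\, \diam(U)^{1-d/p}\|\nabla\varphi\|_{L^p(U)}, \qquad y,z\in\overline U .
$$
This is the main obstacle. The factor $\diam(U)^d/|U|$ is not controlled without regularity: $\mathcal{V}_{\sigma,K}$ may be very flat, since $d_{\sigma,K}$ can be small compared with $|\sigma|$. The statement claims a bound for all admissible meshes, so this flatness has to be circumvented rather than assumed away.

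I would circumvent it by routing through the mean value, exactly as in the proof of Proposition~\ref{pr:diff_Pih_Ph}, but on $\mathcal{V}_{\sigma,K}$.
\begin{itemize}
\item \emph{Cell centre.} Let $m$ be the mean of $\varphi$ on $\mathcal{V}_{\sigma,K}$. The point $x_K$ is the apex of this star-shaped region, and the scaling argument of Proposition~\ref{pr:diff_Pih_Ph} (with $K_t$ replaced by the homothetic copies $t\mathcal{V}_{\sigma,K}+(1-t)x_K$) gives
$$
|\varphi(x_K)-m| \le C_p h|\mathcal{V}_{\sigma,K}|^{-1/p}\|\nabla\varphi\|_{L^p(\mathcal{V}_{\sigma,K})}.
$$
\item \emph{Edge centre.} For $x_\sigma$ I cannot simply rescale, because homothety about $x_\sigma$ with ratio $t\le1$ keeps $\mathcal{V}_{\sigma,K}$ inside itself (by convexity, as $x_\sigma \in \overline{\mathcal{V}_{\sigma,K}}$), but the Jacobian argument only needs $t$-homothetic copies to lie inside the set. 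Since $x_\sigma\in\overline{\mathcal{V}_{\sigma,K}}$, the same computation applies verbatim with $x_K$ replaced by $x_\sigma$ and yields the same bound for $|\varphi(x_\sigma)-m|$.
\end{itemize}
Combining the two, I obtain
$$
|\varphi(x_K)-\varphi(x_\sigma)|^p\,|\mathcal{V}_{\sigma,K}| \le C_p h^p\|\nabla\varphi\|_{L^p(\mathcal{V}_{\sigma,K})}^p .
$$
Convexity is the only property used, so no regularity constant appears.

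Multiplying by the weight $|\mathcal{V}_{\sigma,K}|=|\sigma|d_{\sigma,K}/d$ only controls part of the diamond weight $|\sigma|d_\sigma/d$. To avoid comparing $d_{\sigma,K}$ with $d_{\sigma,L}$, which would require $\Xi$-regularity, I would instead split the edge term by halves of the diamond. Since $|\mathcal{V}_\sigma|=|\mathcal{V}_{\sigma,K}|+|\mathcal{V}_{\sigma,L}|$, I reorganize the estimate as
$$
|a+b|^p|\mathcal V_\sigma| \le 2^{p-1}\big(|a|^p+|b|^p\big)\big(|\mathcal V_{\sigma,K}|+|\mathcal V_{\sigma,L}|\big),
$$
where $a=\tfrac12(\varphi(x_K)-\varphi(x_\sigma))$ and $b=\tfrac12(\varphi(x_L)-\varphi(x_\sigma))$. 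The cross terms, such as $|a|^p|\mathcal V_{\sigma,L}|$, are the genuinely delicate point.

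To handle the cross terms, I would bound $a$ by a Morrey estimate on the whole convex diamond $\mathcal V_\sigma$, which contains $x_K$, $x_L$ and $x_\sigma$, using the mean over $\mathcal V_\sigma$ and the apex-scaling argument with respect to each of these three points. The diamond $\mathcal V_\sigma$ is convex because $[x_K,x_L]$ is orthogonal to $\sigma$ and crosses its hyperplane. The bound then involves $|\mathcal V_\sigma|$ directly, and the term becomes
$$
|I_\sigma P_h\varphi-\varphi(x_\sigma)|^p|\mathcal V_\sigma|\le C_p h^p\|\nabla\varphi\|^p_{L^p(\mathcal V_\sigma)} .
$$

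For an exterior edge, $\varphi=0$ on $\sigma$, so $\varphi(x_\sigma)=0$ and the edge term reduces to $\tfrac12\varphi(x_K)$. I would handle it by the Taylor expansion from $x_K$ averaged over $\sigma$, as in Proposition~\ref{pr:12_boundedness_Ph}, combined with Lemma~\ref{lemma:diamond_integration_lemma} with $r=0$. This gives $|\varphi(x_K)|\le C h\,|\sigma|^{-1/p}d_{\sigma,K}^{-1/p}\|\nabla\varphi\|_{L^p(\mathcal V_{\sigma,K})}$, and the weight $|\sigma|d_{\sigma,K}/d$ exactly cancels these factors.

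Summing the interior and exterior contributions over $\sigma\in\mathcal{E}$ concludes the proof.
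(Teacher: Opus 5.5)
Your final argument is essentially the paper's proof. On interior edges you pivot through the mean of $\varphi$ over the whole diamond $\mathcal{V}_\sigma$ and apply the homothety argument of Proposition~\ref{pr:diff_Pih_Ph} about $x_K$, $x_L$ and $x_\sigma$. The paper does the same, packaging the $x_\sigma$ part as Proposition~\ref{pr:diff_Pih_Ph_diamond} via $\Pi_h^\diamond\varphi$. On exterior edges you bound $|\varphi(x_K)|$ by the Taylor expansion averaged over $\sigma$ together with Lemma~\ref{lemma:diamond_integration_lemma}, which is also the paper's argument; you even skip its detour through $\Pi_h^\diamond\varphi$ there. The half-diamond estimates are a dead end that you rightly abandon, since the weight ratio $|\mathcal{V}_{\sigma,L}|/|\mathcal{V}_{\sigma,K}|$ cannot be controlled without $\Xi$-regularity.

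One step is misjustified. Crossing the hyperplane of $\sigma$ does not make $\mathcal{V}_\sigma$ convex. The union of the two cones is convex exactly when $[x_K,x_L]$ meets $\overline{\sigma}$. If its foot on the hyperplane lies outside $\overline{\sigma}$, the diamond is not even star-shaped about $x_K$, and the inclusion $t\mathcal{V}_\sigma+(1-t)x_K\subset\mathcal{V}_\sigma$ fails. The definition only imposes $x_K\in K$ and orthogonality, which does not obviously exclude this: both $K$ and $L$ can have an obtuse angle at the same endpoint of $\sigma$. The paper's proof relies on the same star-shapedness tacitly when it ``integrates over $\mathcal{V}_\sigma$ instead of $K$''. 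So this is a hidden assumption you share with the paper, not a flaw of your route. Still, you should state $[x_K,x_L]\cap\overline{\sigma}\neq\emptyset$ as a hypothesis rather than derive it from admissibility.
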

\begin{proof}
  We write 
  $$
  \|I_h P_h \varphi - P_h^\diamond \varphi\| \leq \|I_h P_h \varphi - \Pi_h^\diamond \varphi\|_{L^p} + \|\Pi_h^\diamond \varphi - P_h^\diamond \varphi\|_{L^p},
  $$
  so that we only need to estimate $\|I_h P_h \varphi - \Pi_h^\diamond \varphi\|_{L^p}$ thanks to Proposition \ref{pr:diff_Pih_Ph_diamond}. Let $\sigma \in \mathcal{E}$. Proceeding as in the proof of Proposition \ref{pr:diff_Pih_Ph} by integrating over $\mathcal{V}_\sigma$ instead of $K$, we have
  $$
  |(\Pi_h^\diamond \varphi)_{\sigma} - \varphi(x_K)| \leq C_p \diam(\mathcal{V}_\sigma) |\mathcal{V}_\sigma|^{-1/p} \|\nabla \varphi\|_{L^p(\mathcal{V}_\sigma)},
  $$
  where $K$ is such that $\sigma \in \mathcal{E}_K$. If $\sigma = K|L$ is an interior edge, taking the average with the same inequality in $L$ gives
  $$
  |(\Pi_h^\diamond \varphi)_\sigma - I_\sigma P_h \varphi| \leq C_p \diam(\mathcal{V}_\sigma) |\mathcal{V}_\sigma|^{-1/p}\|\nabla \varphi\|_{L^p(\mathcal{V}_\sigma)}.
  $$
  On the other hand if $\sigma \in \mathcal{E}_K \cap \mathcal{E}_{\text{ext}}$, we infer
\[    |(\Pi_h^\diamond \varphi)_\sigma - I_\sigma P_h \varphi| = \left|(\Pi_h^\diamond \varphi)_\sigma - \frac{1}{2}\varphi(x_K)\right| \leq |(\Pi_h^\diamond \varphi)_{\sigma} - \varphi(x_K)| + \frac{1}{2}|\varphi(x_K)|. \]
  Then, as written above, we have first
  $$
  |(\Pi_h^\diamond \varphi)_\sigma - \varphi(x_K)| \leq C_p \diam(\mathcal{V}_\sigma)|\mathcal{V}_\sigma|^{-1/p}\|\nabla \varphi\|_{L^p(\mathcal{V}_\sigma)}.
  $$
  Moreover, using the same computations as in the proof of Proposition \ref{pr:12_boundedness_Ph} and the fact that $\sigma$ is an exterior edge, we have 
  \begin{align*}
    |\varphi(x_K)| &= \left|\frac{1}{|\sigma|}\int_\sigma \int_{0}^{1} \nabla \varphi(tx_K + (1-t)x) \cdot (x_K - x) dt dx\right|
    \\
    &\leq C_p \frac{\diam(\mathcal{V}_\sigma)}{d_{\sigma}^{1/p}|\sigma|^{1/p}}\|\nabla \varphi\|_{L^p(\mathcal{V}_\sigma)}
    \\
    &\leq C_p \diam(\mathcal{V}_\sigma) |\mathcal{V}_\sigma|^{-1/p} \|\nabla \varphi\|_{L^p(\mathcal{V}_\sigma)}
  \end{align*}
  Therefore, multiplying by $|\mathcal{V}_\sigma|$ and summing over $\sigma \in \mathcal{E}$ we obtain 
  \begin{align*}
    \|\Pi_h^\diamond \varphi - I_h P_h \varphi\|_{L^p}^p &\leq C_p \sum_{\sigma \in \mathcal{E}} \diam(\mathcal{V}_\sigma)^p \|\nabla \varphi\|_{L^p(\mathcal{V}_\sigma)}^p \leq C_{p} h^p \|\nabla \varphi\|_{L^p}^p.
  \end{align*}
\end{proof}

\section{Convergence of discrete minimizers} \label{sec:minimizers}

\subsection{Boundedness of the discrete energy} We turn to the analysis of the discrete energy \eqref{eq:discrete_GP_energy}. We start by proving continuity estimates for the discretized angular momentum and potential energy, then turn to the continuity and coercivity of the linear operator $H_h$.

\begin{proposition}\label{pr:Lz_self_adjoint}
 The operator $L_h : X(\mathcal{M}) \to X(\mathcal{M})$ is self adjoint for the $L^2$-product. Moreover, for all $\Xi \in (0, 1]$, there exists $C_\Xi > 0$ such that, for all $\Xi$-regular admissible meshes $\mathcal{M}$ and $u, v \in X(\mathcal{M})$,
\[  \left| \left( L_h u, v\right) \right| \leq C_{\Xi} \|u\|_{L^2} |v|_{1, 2} . \]
\end{proposition}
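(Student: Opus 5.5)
Both claims follow from rewriting $(L_h u, v)$ as a sum over edges. Set $r_{\sigma,K} := R(x_\sigma)\cdot n_{\sigma,K}$, and note that $r_{\sigma,L} = -r_{\sigma,K}$ when $\sigma = K|L$. By definition,
\[
(L_h u, v) = i\sum_{K\in\mathcal{F}} \sum_{\sigma\in\mathcal{E}_K} |\sigma|\, r_{\sigma,K}\,(I_\sigma u)\,\overline{v_K}.
\]
Regroup this by edges. For an interior edge $\sigma=K|L$ the contribution is $i|\sigma| r_{\sigma,K} (I_\sigma u)(\overline{v_K}-\overline{v_L}) = -i|\sigma| r_{\sigma,K}(I_\sigma u)\overline{D_{\sigma,K}v}$. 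For an exterior edge it is $i|\sigma| r_{\sigma,K}(I_\sigma u)\overline{v_K}$, which has the same form since $D_{\sigma,K}v=-v_K$. This gives the edge formula
\[
(L_h u,v) = -i\sum_{\sigma\in\mathcal{E}} |\sigma|\, r_{\sigma,K}\,(I_\sigma u)\,\overline{D_{\sigma,K} v},
\]
and the summand does not depend on which side $K$ is chosen, because both $r$ and $D$ change sign together.

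\textbf{Self-adjointness.} I will show $(L_h u,v)=\overline{(L_h v,u)}$. On an interior edge, expand $(I_\sigma u)\overline{D_{\sigma,K}v} = \tfrac12(u_K+u_L)(\overline{v_L}-\overline{v_K})$. The cross terms $u_K\overline{v_L}-u_L\overline{v_K}$ and the diagonal terms $u_L\overline{v_L}-u_K\overline{v_K}$ appear.

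The diagonal terms, summed over all edges of a cell, are multiplied by $\sum_{\sigma\in\mathcal{E}_K}|\sigma| r_{\sigma,K}$. Since $R$ is affine, $R(x_\sigma)\cdot n_{\sigma,K}|\sigma| = \int_\sigma R\cdot n_{\sigma,K}$. By the divergence theorem this sum equals $\int_K \mathrm{div} R = 0$. Exterior edges fit the same bookkeeping, with $u_L=v_L=0$ formally, so each cell's diagonal contribution vanishes.

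What remains is $-\tfrac{i}{2}\sum_{\sigma=K|L}|\sigma| r_{\sigma,K}(u_K\overline{v_L}-u_L\overline{v_K})$. This expression is manifestly Hermitian in $(u,v)$: swapping $u$ and $v$ and conjugating reproduces it, because the prefactor $-i$ conjugates to $i$ and the antisymmetric bracket flips sign. So $L_h$ is self-adjoint.

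I expect this bookkeeping to be the main obstacle. Concretely, one must carefully verify the cellwise identity $\sum_{\sigma}|\sigma| r_{\sigma,K}=0$, which requires $x_\sigma$ to be the barycenter of $\sigma$ so that the midpoint rule is exact for the affine field $R$, and one must check that the boundary edges introduce no asymmetric term.

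\textbf{Continuity.} Start from the edge formula and use $|r_{\sigma,K}|\le |x_\sigma|\le C_{\mathcal{D}}$, since $\mathcal{D}$ is bounded. Cauchy--Schwarz gives
\[
|(L_h u,v)| \le C\Big(\sum_\sigma |\sigma| d_\sigma |I_\sigma u|^2\Big)^{1/2}\Big(\sum_\sigma \frac{|\sigma|}{d_\sigma}|D_\sigma v|^2\Big)^{1/2}.
\]
The first factor is $\sqrt{d}\,\|I_h u\|_{L^2}$, and Proposition~\ref{pr:l2_stability_I} bounds it by $C_\Xi\|u\|_{L^2}$. The second factor is exactly $|v|_{1,2}$. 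This yields the estimate with a constant $C_\Xi$ depending on $\Xi$, $d$ and $\mathcal{D}$.
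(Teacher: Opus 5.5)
Your proof is correct and follows essentially the same route as the paper. The identity $\sum_{\sigma\in\mathcal{E}_K}|\sigma|R(x_\sigma)\cdot n_{\sigma,K}=\int_K \mathrm{div}R=0$ (exact because $R$ is linear and $x_\sigma$ is the barycenter) removes the diagonal $u_K\overline{v_K}$ contributions, leaving a Hermitian sum of cross terms. The continuity bound then comes from the edge formula, Cauchy--Schwarz with weights $(|\sigma|d_\sigma)^{1/2}$ and $(|\sigma|/d_\sigma)^{1/2}$, and Proposition~\ref{pr:l2_stability_I}. The only cosmetic difference is that the paper drops the $\frac12 u_K$ part of $I_\sigma u$ directly in the cell formula for $(L_h u)_K$ before pairing with $v$, whereas you split into diagonal and cross terms after passing to the edge form.
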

\begin{proof}
We start by remarking that for any $u \in X(\mathcal{M})$ and $K \in \mathcal{F}$, from the definition of $I_{\sigma} u$ along \eqref{eq:def_Lz}, we can rewrite
\[  (L_h u)_K = \frac{iu_K}{2|K|}\sum_{\sigma \in \mathcal{E}_K} |\sigma| R(x_\sigma) \cdot n_{\sigma, K} + \frac{i}{2|K|}\sum_{\sigma = K|L} |\sigma| u_L R(x_\sigma) \cdot n_{\sigma, K}, \]
  and the first term vanishes on the right-hand side as 
  \begin{align*}
    0 = \int_{K} \mathrm{div}(R)= \sum_{\sigma \in \mathcal{E}_K} \int_\sigma R(x) \cdot n_{\sigma, K} \dd x = \sum_{\sigma \in \mathcal{E}_K} |\sigma| R(x_\sigma) \cdot n_{\sigma, K},
  \end{align*}
  where we have used the linearity of $R$ to get that $\frac{1}{|\sigma|}\int_{\sigma} R(x) dx = R(x_\sigma)$. From \eqref{eq:def_Lz}, we thus have, using that $n_{\sigma, K} = -n_{\sigma, L}$ for $\sigma = K|L$, 
  \begin{align*}
    \left( L_h u, v \right) = -\frac{i}{2}\sum_{L \in \mathcal{F}} \sum_{\sigma = K|L} |\sigma| u_L \overline{v_K} R(x_\sigma) \cdot n_{\sigma, L}= \left(  u, L_h v \right),
  \end{align*}
   which proves the self-adjointness. To get the continuity estimate, we remark from \eqref{eq:def_Lz} that
  \begin{align*}
    \left( L_h u, v\right) &= i\sum_{K \in \mathcal{F}} \sum_{\sigma \in \mathcal{E}_K} |\sigma| (I_\sigma u) \overline{v_K} R(x_\sigma) \cdot n_{\sigma, K},
  \end{align*}
 for any $u,v \in X(\mathcal{M})$. For $\sigma = K|L$, we thus have the contribution
  $$
  |\sigma| I_\sigma u \left(n_{\sigma, K} \overline{v_K} + n_{\sigma, L} \overline{v_L}\right) \cdot R(x_\sigma) = |\sigma| (I_\sigma u) (D_\sigma \overline{v}) R(x_\sigma) \cdot n_\sigma,
  $$
  so that rearranging the sum, 
  $$
  \left( L_h u, v\right) = i\sum_{\sigma \in \mathcal{E}} |\sigma| (I_\sigma u)(D_\sigma \overline{v}) R(x_\sigma) \cdot n_\sigma.
  $$
  Therefore, by Cauchy-Schwarz inequality and Proposition~\ref{pr:l2_stability_I} we obtain 
  \begin{align*}
    |\left( L_h u, v\right)| &= \left|\sum_{\sigma \in \mathcal{E}} \left(\frac{|\sigma|^{1/2}}{d_\sigma^{1/2}}D_\sigma \overline{v}\right) \left(|\sigma|^{1/2}d_\sigma^{1/2} I_\sigma u\right) (R(x_\sigma) \cdot n_\sigma)\right|
    \\
    &\leq C \left(\sum_{\sigma \in \mathcal{E}} \frac{|\sigma|}{d_\sigma} |D_\sigma v|^2 \right)^{1/2} \left(\sum_{\sigma \in \mathcal{E}} \frac{|\sigma|d_\sigma}{d} |I_\sigma u|^2\right)^{1/2}
    \\
    &\leq C_{\Xi} \|u\|_{L^2} |v|_{1, 2}.
  \end{align*}  
\end{proof}

\begin{proposition}\label{pr:boundedness_pot}
For all $\Xi \in (0, 1]$, there exists $C_\Xi > 0$ such that, for all $\Xi$-regular admissible meshes $\mathcal{M}$ and for all $u,v \in X(\mathcal{M})$, 
  $$
  \left| \left( V_h u, v\right) \right|\leq C_{\Xi} \| V \|_{L^\infty} \|u\|_{L^2} \|v\|_{L^2}. 
  $$
\end{proposition}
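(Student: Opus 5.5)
The plan is to use the identity already noted after the definition of $V_h$: since $I_h^*$ is the $L^2$-adjoint of $I_h$, for $u,v \in X(\mathcal{M})$ we have
\[
\left( V_h u, v\right) = \left( I_h^*\left[(P_h^\diamond V) I_h u\right], v\right) = \left( (P_h^\diamond V) I_h u, I_h v\right) = \sum_{\sigma \in \mathcal{E}} V(x_\sigma)\, (I_\sigma u)\, \overline{I_\sigma v}\, |\mathcal{V}_\sigma|.
\]
So the bound reduces to an estimate on the diamond mesh, and no information about derivatives of $V$ is needed.

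The steps are as follows.
\begin{enumerate}
\item Bound $|V(x_\sigma)| \le \|V\|_{L^\infty}$. This is legitimate because $V \in W^{1,\infty}(\mathcal{D})$ is Lipschitz continuous on $\overline{\mathcal{D}}$, so point values at the edge centers $x_\sigma$, including those on $\partial\mathcal{D}$ for exterior edges, make sense and are controlled by the sup norm.
\item Apply the Cauchy--Schwarz inequality to the sum over $\sigma$ with weights $|\mathcal{V}_\sigma|$. This gives
\[
\left|\left( V_h u, v\right)\right| \le \|V\|_{L^\infty} \|I_h u\|_{L^2} \|I_h v\|_{L^2}.
\]
\item Conclude with the first estimate of Proposition~\ref{pr:l2_stability_I} for $p=2$, namely $\|I_h w\|_{L^2} \le C_{2,\Xi}\|w\|_{L^2}$ on $\Xi$-regular meshes. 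Applied to both $u$ and $v$, this yields the claim with $C_\Xi = C_{2,\Xi}^2$.
\end{enumerate}

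There is no genuine obstacle. The only point requiring care is the adjoint identity, which holds for the complex Hermitian product $(\cdot,\cdot)$ by definition of $I_h^*$. The $\Xi$-dependence enters solely through Proposition~\ref{pr:l2_stability_I}, via the ratio $d_\sigma/d_{\sigma,K}$ controlled by \eqref{eq:mesh_regularity}.
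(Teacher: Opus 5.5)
Your proposal is correct and is exactly the paper's argument: the adjoint identity reduces $\left( V_h u, v\right)$ to a sum over the diamond mesh. Bounding $|V(x_\sigma)|$ by $\|V\|_{L^\infty}$ and applying Cauchy--Schwarz then gives $\|V\|_{L^\infty}\|I_h u\|_{L^2}\|I_h v\|_{L^2}$, and the $L^2$-stability of $I_h$ from Proposition~\ref{pr:l2_stability_I} concludes.
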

\begin{proof}
We simply write from Cauchy-Scwharz inequality that
\[ \left|\left( V_h u, v\right) \right| = \left|\sum_{\sigma \in \mathcal{E}} V(x_\sigma) | \mathcal{V}_\sigma| I_\sigma u \overline{ I_\sigma v} \right| \leq \| V \|_{L^{\infty}} \| I_h u \|_{L^2} \| I_h v \|_{L^2}, \]
so the result follows from Proposition \ref{pr:l2_stability_I}.
\end{proof}

\begin{proposition}\label{pr:equivalence_h1}
For all $\Xi \in (0, 1]$, there exists $C_1, C_\Xi > 0$ such that, for all $\Xi$-regular admissible meshes and $u \in X(\mathcal{M})$, we have
\[  C_1 |u|_{1, 2}^2 \leq \left( H_h u, u \right) \leq C_{\Xi} |u|_{1, 2}^2. \]
\end{proposition}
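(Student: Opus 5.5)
For the upper bound, I will bound the three parts of $H_h = -A_h + V_h - \Omega L_h$ one at a time. For the lower bound, I will exploit the edge-wise form of $(L_h u,u)$ obtained in the proof of Proposition~\ref{pr:Lz_self_adjoint}. A Young inequality on each edge then lets the potential term absorb the rotation term exactly, thanks to assumption \eqref{eq:assumption_V}. This is what makes $C_1$ independent of $\Xi$.

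\emph{Upper bound.} We have $-(A_h u,u) = |u|_{1,2}^2$. Proposition~\ref{pr:boundedness_pot} gives $(V_h u,u)\le C_\Xi\|V\|_{L^\infty}\|u\|_{L^2}^2$, and Proposition~\ref{pr:Lz_self_adjoint} gives $|\Omega(L_h u,u)|\le C_\Xi|\Omega|\,\|u\|_{L^2}|u|_{1,2}$. To close, I will invoke the standard discrete Poincaré inequality $\|u\|_{L^2}\le \diam(\mathcal{D})\,|u|_{1,2}$ for admissible meshes (\cite[Lemma 9.1]{EymardGallouetHerbin2000}). Combining these gives $(H_h u,u)\le C_\Xi |u|_{1,2}^2$. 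Here $(H_h u,u)$ is real because $L_h$ is self-adjoint and $A_h$, $V_h$ clearly are.

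\emph{Lower bound.} From the proof of Proposition~\ref{pr:Lz_self_adjoint},
\[ (L_h u,u) = i\sum_{\sigma\in\mathcal{E}}|\sigma|\,(I_\sigma u)(D_\sigma\overline{u})\,R(x_\sigma)\cdot n_\sigma . \]
Since $|R(x)|\le |x|$, for any $\varepsilon>0$ the bound $ab\le \varepsilon a^2+b^2/(4\varepsilon)$ applied on each edge yields
\[ |\Omega (L_h u,u)| \le \varepsilon \sum_{\sigma}\frac{|\sigma|}{d_\sigma}|D_\sigma u|^2 + \frac{\Omega^2}{4\varepsilon}\sum_\sigma |\sigma| d_\sigma |x_\sigma|^2 |I_\sigma u|^2 . \]
On the other hand, \eqref{eq:assumption_V} gives
\[ (V_h u,u)=\sum_\sigma V(x_\sigma)|I_\sigma u|^2\frac{|\sigma|d_\sigma}{d} \ge (1+\kappa)\frac{\Omega^2}{4}\sum_\sigma |x_\sigma|^2|I_\sigma u|^2|\sigma|d_\sigma . \]
The factor $d$ in \eqref{eq:assumption_V} is exactly what cancels the $1/d$ in $|\mathcal{V}_\sigma|$.

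Choosing $\varepsilon = 1/(1+\kappa)$, the weighted $|I_\sigma u|^2$ sums cancel. This leaves
\[ (H_h u,u)\ge \Bigl(1-\tfrac{1}{1+\kappa}\Bigr)|u|_{1,2}^2, \]
so $C_1=\kappa/(1+\kappa)$, independent of $\Xi$ and of the mesh.

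The only delicate point is arranging the edge-wise Young inequality so that both sides are expressed on the same diamond quantities $|\sigma| d_\sigma |x_\sigma|^2|I_\sigma u|^2$. This works precisely because $V_h$ was defined on the diamond mesh through $I_h$ and evaluated at the same points $x_\sigma$ as in $L_h$. Boundary edges need no separate treatment: there the same formulas hold with $D_\sigma u=-u_K$ and $I_\sigma u = u_K/2$.
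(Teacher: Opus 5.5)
Your proof is correct and follows essentially the paper's argument. The upper bound combines Propositions~\ref{pr:Lz_self_adjoint} and~\ref{pr:boundedness_pot} with the discrete Poincaré inequality; the lower bound balances the rotation term against the diamond-mesh potential through an edge-wise Young inequality calibrated by $\kappa$, giving the same constant $C_1=\kappa/(1+\kappa)$. The only cosmetic difference is that the paper first rewrites $(H_h u,u)$ as a completed discrete magnetic-gradient square plus a shifted potential term (identity \eqref{eq:H_prod_symmetric}) before applying Young, whereas you apply Young directly to the edge-wise formula for $(L_h u,u)$, which yields the identical estimate.
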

\begin{proof}
The upper bound is a direct consequence of Proposition \ref{pr:Lz_self_adjoint}, Proposition \ref{pr:boundedness_pot} and discrete Poincaré inequality $\|u\|_{L^2} \leq C_{\mathcal{D}} |u|_{1, 2}$ (see e.g. \cite[Lemma 9.1]{EymardGallouetHerbin2000}), writing that
\[    \left( H_h u, u \right) = -\left( A_h u, u \right) + \left( V_h u, u\right) - \Omega\left( L_h u, u\right) \leq |u|_{1, 2}^2 + C_{\Xi} \|u\|_{L^2}^2 + C_{\Xi} \|u\|_{L^2}|u|_{1, 2} \leq C_{\Xi} |u|_{1, 2}^2. \]
 For the lower bound, we adapt the argument from \cite[Appendix A]{DodingHenning2024} in the discrete setting. We begin by computing 
\[    \sum_{\sigma \in \mathcal{E}} d_\sigma |\sigma| \left|\frac{D_\sigma u}{d_\sigma}n_\sigma + i \frac{\Omega}{2} R(x_\sigma) I_\sigma u\right|^2 
    = -\langle A_h u, u\rangle - \Omega \langle L_h u, u\rangle + \frac{d \Omega^2}{4} \sum_{\sigma \in \mathcal{E}} \frac{|\sigma| d_\sigma}{d} |R(x_\sigma)|^2 |I_\sigma u|^2,\]
  where we have used that $L_h$ is self-adjoint. We thus deduce that 
  \begin{equation}\label{eq:H_prod_symmetric}
    \left( H_h u, u\right) = \sum_{\sigma \in \mathcal{E}} d_\sigma |\sigma| \left|\frac{D_\sigma u}{d_\sigma}n_\sigma + i \frac{\Omega}{2} R(x_\sigma) I_\sigma u\right|^2 + \sum_{\sigma \in \mathcal{E}} \frac{|\sigma|d_\sigma}{d} \left(V(x_\sigma) - \frac{d\Omega^2}{4} |R(x_\sigma)|^2\right) |I_\sigma u|^2.
  \end{equation}
  We now focus on the first part of the right hand side of \eqref{eq:H_prod_symmetric}. We write 
  $$
  \left|\frac{D_\sigma u}{d_\sigma} n_\sigma + i\frac{\Omega}{2}(I_\sigma u)R(x_\sigma)\right|^2 \geq \frac{|D_\sigma u|^2}{d_\sigma^2} - \Omega \frac{|D_\sigma u|}{d_\sigma} |I_\sigma u| |R(x_\sigma)| + \frac{\Omega^2}{4}|I_\sigma u|^2 |R(x_\sigma)|^2.
  $$
  Using Young's inequality $ab \leq \frac{1}{\delta}a^2 + \frac{\delta}{4}b^2$ with $a = \frac{|D_\sigma u|}{d_\sigma}$ and $b = \Omega |I_\sigma u| |R(x_\sigma)|$, we get
  $$
  \left|\frac{D_\sigma u}{d_\sigma} t_\sigma + i\frac{\Omega}{2}(I_\sigma u)x_\sigma\right|^2 \geq \left(\frac{\delta - 1}{\delta}\right) \frac{|D_\sigma u|^2}{d_\sigma^2} + (1-\delta) \frac{\Omega^2}{4}|I_\sigma u|^2 |R(x_\sigma)|^2,
  $$
  so that, using \eqref{eq:H_prod_symmetric}, we infer
  $$
  \left( H_h u, u \right) \geq \left(\frac{\delta - 1}{\delta}\right) |u|_{1, 2}^2 + \sum_{\sigma \in \mathcal{E}} \frac{|\sigma|d_\sigma}{d}\left(V(x_\sigma) - \delta \frac{d \Omega^2}{4}|R(x_\sigma)|^2\right)|I_\sigma u|^2.
  $$
  We then set $\delta = 1 + \kappa$ where $\kappa$ is given by \eqref{eq:assumption_V}, so that 
\[    \left( H_h u, u \right)\geq \frac{\kappa}{1 + \kappa} |u|_{1, 2}^2 + \sum_{\sigma \in \mathcal{E}} \frac{|\sigma|d_\sigma}{d}\left(V(x_\sigma) - (1+\kappa) \frac{d \Omega^2}{4}|R(x_\sigma)|^2\right)|I_\sigma u|^2 \geq \frac{\kappa}{1+\kappa} |u|_{1, 2}^2. \]
\end{proof}

\subsection{Convergence of the discrete energy}

Before proving the convergence of the discrete energy towards the continuous one, we begin by showing the consistency of the discretized Hamiltonian. This is given by the three following propositions. 

\begin{proposition}\label{pr:convergence_laplacian}
  For all $\Xi \in (0, 1]$, there exists $C_\Xi > 0$ such that, for all $\Xi$-regular admissible meshes $\mathcal{M}$ and for all $\varphi \in H^2(\mathcal{D})$ and $v \in X(\mathcal{M})$, we have
\[  \left|\left( \Pi_h \Delta \varphi - A_h P_h \varphi, v \right)\right| \leq C_\Xi h |v|_{1, 2} \|D^2 \varphi\|_{L^2} , \]
  where $D^2 \varphi$ denotes the Hessian matrix of $\varphi$.
\end{proposition}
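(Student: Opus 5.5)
The plan is to follow the classical consistency analysis of the two-point flux approximation (as in \cite[Theorem 9.4]{EymardGallouetHerbin2000}). I will treat $\varphi$ as vanishing on $\partial\mathcal{D}$, which is implicit since $A_h$ encodes homogeneous Dirichlet conditions through $D_{\sigma,K}u=-u_K$ on exterior edges. Note also that $P_h\varphi$ is well defined because $H^2(\mathcal{D})\subset\mathcal{C}^0(\overline{\mathcal{D}})$ for $d\le 3$.

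\textbf{Step 1: rewrite the pairing as a sum of flux errors.} By the divergence theorem,
\[ (\Pi_h\Delta\varphi,v)=\sum_{K}\overline{v_K}\sum_{\sigma\in\mathcal{E}_K}\int_\sigma\nabla\varphi\cdot n_{\sigma,K}. \]
By definition of $A_h$,
\[ (A_hP_h\varphi,v)=\sum_K\overline{v_K}\sum_{\sigma\in\mathcal{E}_K}\frac{|\sigma|}{d_\sigma}D_{\sigma,K}P_h\varphi. \]
For each $K$ and $\sigma\in\mathcal{E}_K$ I introduce the flux consistency error
\[ R_{\sigma,K}:=\frac{1}{|\sigma|}\int_\sigma\nabla\varphi\cdot n_{\sigma,K}-\frac{D_{\sigma,K}P_h\varphi}{d_\sigma}. \]
It is conservative: $R_{\sigma,K}=-R_{\sigma,L}$ for $\sigma=K|L$. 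Reordering the sum over edges gives
\[ (\Pi_h\Delta\varphi-A_hP_h\varphi,v)=-\sum_{\sigma}|\sigma|R_{\sigma,K}\,\overline{D_{\sigma,K}v}. \]
The Cauchy--Schwarz inequality then yields the bound $|v|_{1,2}\bigl(\sum_\sigma|\sigma|d_\sigma|R_\sigma|^2\bigr)^{1/2}$. It therefore suffices to prove the local estimate
\[ |\sigma|d_\sigma|R_\sigma|^2\le C_\Xi h^2\|D^2\varphi\|^2_{L^2(\mathcal{V}_\sigma)}, \]
and then sum over $\sigma$.

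\textbf{Step 2: second-order Taylor expansion.} For $x\in\sigma$ I expand
\[ \varphi(x_K)=\varphi(x)+\nabla\varphi(x)\cdot(x_K-x)+\int_0^1(1-t)\,D^2\varphi\bigl(tx_K+(1-t)x\bigr)(x_K-x)\cdot(x_K-x)\,\dd t \]
and average over $\sigma$. Let $y_\sigma$ be the orthogonal projection of $x_K$ onto the hyperplane of $\sigma$. Then
\[ x_K-x=-d_{\sigma,K}n_{\sigma,K}+(y_\sigma-x), \]
where $y_\sigma-x$ is tangential. Admissibility (orthogonality of $[x_K,x_L]$ to $\sigma$) is exactly what makes $y_\sigma$ common to $K$ and $L$.

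\emph{Interior edges.} Subtracting the expansions for $K$ and $L$ cancels the term $\varphi(x)$ and the tangential term $\int_\sigma\nabla\varphi(x)\cdot(y_\sigma-x)$. What remains is
\[ D_{\sigma,K}P_h\varphi=d_\sigma\frac{1}{|\sigma|}\int_\sigma\nabla\varphi\cdot n_{\sigma,K}+\frac{T_K-T_L}{|\sigma|}, \]
with $T_K$ the averaged second-order remainder.

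\emph{Exterior edges.} Here $\varphi=0$ on $\sigma$, so the tangential gradient vanishes there and the same identity holds with only one cell.

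\textbf{Step 3: bound the remainder (the main obstacle).} I need to control the Taylor remainder integrated over $\sigma\times(0,1)$ in terms of $\|D^2\varphi\|_{L^2}$ on the half-diamond. This is precisely what Lemma~\ref{lemma:diamond_integration_lemma} provides, but the variables must be arranged correctly. After the substitution $t\mapsto 1-t$, the weight $(1-t)$ becomes $s$ with the argument $sx+(1-s)x_K$. So I apply the lemma with $r=1$ and $p=2$; the condition $2>d/2$ holds for $d\le3$. Using $|x_K-x|\le\diam(K)$, this gives
\[ |T_K|\le C\,\diam(K)^2\frac{|\sigma|^{1/2}}{d_{\sigma,K}^{1/2}}\|D^2\varphi\|_{L^2(\mathcal{V}_{\sigma,K})}. \]
Since $R_\sigma=(T_K-T_L)/(|\sigma|d_\sigma)$, I obtain
\[ |\sigma|d_\sigma|R_\sigma|^2\le C\sum_{M\in\{K,L\}}\frac{\diam(M)^4}{d_\sigma d_{\sigma,M}}\|D^2\varphi\|^2_{L^2(\mathcal{V}_{\sigma,M})}. \]
The $\Xi$-regularity \eqref{eq:mesh_regularity} gives $\diam(M)\le d_{\sigma,M}/\Xi\le d_\sigma/\Xi$, so this is bounded by $C_\Xi h^2\|D^2\varphi\|^2_{L^2(\mathcal{V}_\sigma)}$.

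Summing over $\sigma$, and using that the diamonds $\mathcal{V}_\sigma$ cover $\mathcal{D}$ up to a null set, concludes the proof. The delicate points are the cancellation of the tangential first-order terms, which relies on admissibility in the interior and on $\varphi|_{\partial\mathcal{D}}=0$ at the boundary, and the correct use of the singular weight in Lemma~\ref{lemma:diamond_integration_lemma}.
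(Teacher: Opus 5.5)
Your proof is correct and is essentially the classical TPFA consistency argument of \cite[Theorem 9.4]{EymardGallouetHerbin2000}, which is all the paper gives here (it only cites that result); you reconstruct it in full, using the half-diamond estimate of Lemma~\ref{lemma:diamond_integration_lemma} with $r=1$, $p=2$ to handle the $H^2$ Taylor remainder. You are also right that the statement implicitly needs $\varphi \in H^1_0(\mathcal{D})$ (for $\varphi\equiv 1$ the left-hand side is nonzero while the right-hand side vanishes), which is how the paper uses it, and the only slip is a harmless sign in the interior-edge identity, whose remainder should read $(T_L-T_K)/|\sigma|$.
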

\begin{proof}
The result is classical, see for instance \cite[Theorem 9.4]{EymardGallouetHerbin2000}.
\end{proof}

\begin{proposition}\label{pr:convergence_Lz}
  For all $\Xi \in (0, 1]$ and $p > d$, there exists $C_{p, \Xi} > 0$ such that, for all $\Xi$-regular admissible meshes $\mathcal{M}$ and for all $\varphi \in W^{1, p}_0(\mathcal{D})$ and $v \in X(\mathcal{M})$, we have
\[  \left| \left( \Pi_h L \varphi - L_h P_h \varphi, v \right) \right| \leq C_{p, \Xi} h |v|_{1, q} \|\nabla \varphi\|_{L^p}, \]
where $1/p + 1/q = 1$.
\end{proposition}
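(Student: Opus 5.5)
The plan is to write both terms as sums over edges and compare them edge by edge. For the continuous term, since $L\varphi = i\,\mathrm{div}(R\varphi)$, the divergence theorem gives
\[
(\Pi_h L\varphi, v) = \sum_{K} \overline{v_K}\int_K L\varphi = i\sum_{K}\overline{v_K}\sum_{\sigma\in\mathcal{E}_K}\int_\sigma \varphi(x)\,R(x)\cdot n_{\sigma,K}\,\dd x .
\]
On an exterior edge the integrand vanishes because $\varphi \in W^{1,p}_0$ with $p>d$ is continuous and zero on $\partial\mathcal{D}$. We then regroup by edges exactly as in the proof of Proposition~\ref{pr:Lz_self_adjoint}. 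Using $n_{\sigma,K}=-n_{\sigma,L}$, this gives
\[
(\Pi_h L\varphi, v) = i\sum_{\sigma\in\mathcal{E}} (D_\sigma\overline{v})\int_\sigma \varphi(x)\,R(x)\cdot n_\sigma\,\dd x ,
\]
with the same sign conventions as there. The same proof already yields
\[
(L_hP_h\varphi,v)= i\sum_\sigma |\sigma|\,(I_\sigma P_h\varphi)\,(D_\sigma\overline v)\,R(x_\sigma)\cdot n_\sigma .
\]
Subtracting, it suffices to bound
\[
T_\sigma := \int_\sigma \varphi(x)\,R(x)\cdot n_\sigma\,\dd x - |\sigma|\,(I_\sigma P_h\varphi)\,R(x_\sigma)\cdot n_\sigma
\]
and then conclude with H\"older: we will aim for $|T_\sigma|\le C h\,|\sigma|^{1-1/p}\,d_\sigma^{1/q}\,\|\nabla\varphi\|_{L^p(\mathcal{V}_\sigma)}$.

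To estimate $T_\sigma$, split it as
\[
T_\sigma=\int_\sigma (\varphi(x)-I_\sigma P_h\varphi)\,R(x)\cdot n_\sigma\,\dd x + (I_\sigma P_h\varphi)\int_\sigma (R(x)-R(x_\sigma))\cdot n_\sigma\,\dd x .
\]
The second integral vanishes because $R$ is linear and $x_\sigma$ is the barycenter of $\sigma$. This is the key cancellation, and the only place the choice of $x_\sigma$ matters. For the first integral, $|R(x)|\le C_{\mathcal D}$ on the bounded domain, so it remains to bound $\int_\sigma|\varphi(x)-I_\sigma P_h\varphi|\,\dd x$. For an interior edge $\sigma=K|L$, write $\varphi(x)-\tfrac12(\varphi(x_K)+\varphi(x_L))$ as half the sum of the Taylor remainders \eqref{eq:Taylor_first_order} from $x_K$ and from $x_L$. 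Each satisfies
\[
|\varphi(x)-\varphi(x_K)|\le \diam(K)\int_0^1|\nabla\varphi(tx_K+(1-t)x)|\,\dd t .
\]
Integrating over $\sigma$ and applying Lemma~\ref{lemma:diamond_integration_lemma} (after substituting $t\to 1-t$, so the weight is $t^0$ and $p>d$ is exactly the condition needed) gives the bound $C_p h\,|\sigma|^{1-1/p}d_{\sigma,K}^{-1/p}\|\nabla\varphi\|_{L^p(\mathcal{V}_{\sigma,K})}$.

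The exterior edges need separate care, since there $I_\sigma P_h\varphi=\tfrac12\varphi(x_K)$ while $\varphi=0$ on $\sigma$. Then $T_\sigma = -\tfrac12|\sigma|\varphi(x_K)R(x_\sigma)\cdot n_\sigma$. As in Proposition~\ref{pr:12_boundedness_Ph}, we have $|\varphi(x_K)|\le C_p\,\diam(K)\,|\sigma|^{-1/p}d_{\sigma,K}^{-1/p}\|\nabla\varphi\|_{L^p(\mathcal{V}_{\sigma,K})}$, which is again of the same form.

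The factor $d_{\sigma,K}^{-1/p}$ is the delicate point. Using $\Xi$-regularity, $d_{\sigma,K}\ge \Xi\, d_\sigma$, so each edge contributes
\[
|T_\sigma|\,|D_\sigma v| \le C_{p,\Xi}\, h\,\|\nabla\varphi\|_{L^p(\mathcal V_\sigma)}\,\Big(\frac{|\sigma|}{d_\sigma^{q-1}}\Big)^{1/q}|D_\sigma v| ,
\]
because $|\sigma|^{1-1/p}d_\sigma^{-1/p}=|\sigma|^{1/q}d_\sigma^{-(q-1)/q}$. Summing over $\sigma$ with the discrete H\"older inequality in $(p,q)$ then yields
\[
C_{p,\Xi}\,h\,\Big(\sum_\sigma\|\nabla\varphi\|^p_{L^p(\mathcal V_\sigma)}\Big)^{1/p}|v|_{1,q}=C_{p,\Xi}\,h\,\|\nabla\varphi\|_{L^p}\,|v|_{1,q}.
\]
I expect the main obstacle to be the bookkeeping of the exponents in Lemma~\ref{lemma:diamond_integration_lemma} so that the powers of $|\sigma|$ and $d_\sigma$ match the $|\cdot|_{1,q}$ weights exactly. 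A secondary point is checking that the boundary terms drop out of $(\Pi_hL\varphi,v)$ with the sign conventions used for $D_\sigma$.
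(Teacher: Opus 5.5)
Your proposal is correct and follows essentially the same route as the paper. Your split, in which $\int_\sigma (R(x)-R(x_\sigma))\cdot n_\sigma\,\dd x$ vanishes by the barycenter property and $\varphi(x)-I_\sigma P_h\varphi$ becomes two half Taylor remainders, is exactly the paper's identity $E_{\sigma,K}=(J_{\sigma,K}-J_{\sigma,L})/(2|\sigma|)$, bounded in the same way via Lemma~\ref{lemma:diamond_integration_lemma} with $r=0$, the regularity bound $d_{\sigma,K}\ge \Xi d_\sigma$, and an edgewise H\"older sum against $|v|_{1,q}$. The only slip is that your announced target carries $d_\sigma^{1/q}$, whereas the bound you actually derive and use, correctly, is $|T_\sigma|\le C_{p,\Xi}h|\sigma|^{1-1/p}d_\sigma^{-1/p}\|\nabla\varphi\|_{L^p(\mathcal{V}_\sigma)}$.
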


\begin{proof}
Our main objective is to estimate the circulation error in the definition of $L_h$. More precisely, for $K \in \mathcal{F}$ and $\sigma \in \mathcal{E}_K$, we define the circulation error $E_{\sigma, K}$ as
\[  \left(\Pi_h L \varphi - L_h P_h \varphi \right)_K = \frac{i}{|K|}\sum_{\sigma \in \mathcal{E}_K} |\sigma| \Bigl( \underbrace{\frac{1}{|\sigma|}\int_{\sigma} \varphi(x) R(x) \cdot n_{\sigma, K} - I_\sigma (P_h \varphi) R(x_\sigma) \cdot n_{\sigma, K} }_{\eqqcolon E_{\sigma, K} } \Bigl). \]
We begin by estimating the error on interior edges. Let $K \in \mathcal{F}$ and $x \in \sigma = K|L$. From the usual Taylor expansion we have
$$
\varphi(x)(R(x) \cdot n_{\sigma, K}) - \varphi(x_K) (R(x) \cdot n_{\sigma, K}) = \int_0^1 \nabla \varphi(tx + (1-t)x_K) \cdot (x-x_K) (R(x) \cdot n_{\sigma, K}) \dd t.
$$
so taking the average on $x \in \sigma$, we obtain 
\begin{equation}\label{eq:formula_R_Lz}
\frac{1}{|\sigma|}\int_{\sigma} \varphi(x) (R(x) \cdot n_{\sigma, K}) - \varphi(x_K)(R(x_\sigma) \cdot n_{\sigma, K}) = \frac{1}{|\sigma|}J_{ \sigma,K},
\end{equation}
where we have used that $x_\sigma = \frac{1}{|\sigma|}\int_\sigma x \dd x$ is the barycenter of $\sigma$, and defining
$$
J_{ \sigma, K} \coloneqq \frac{1}{|\sigma|}\int_\sigma \int_0^1 \nabla \varphi(tx + (1-t)x_K) \cdot (x-x_K) (R(x) \cdot n_{\sigma, K}) \dd t \dd x.
$$
Taking the mean of \eqref{eq:formula_R_Lz} on $K$ and $L \in \mathcal{F}$, we infer
$$
E_{\sigma, K} = \frac{1}{2|\sigma|}(J_{\sigma, K} - J_{\sigma, L}),
$$
where we have used that $I_\sigma(P_h \varphi) = \frac{1}{2}(\varphi(x_K) + \varphi(x_L))$ and $n_{\sigma, L} = -n_{\sigma, K}$. In order to estimate $E_{\sigma, K}$, we thus begin by estimating $J_{\sigma, K}$. We have, using Lemma \ref{lemma:diamond_integration_lemma} and thanks to \eqref{eq:mesh_regularity} that
\begin{align*}
  |J_{\sigma, K}| &\leq C \diam(K) \int_\sigma \int_0^1 |\nabla \varphi(tx + (1-t)x_K)| \dd t \dd x,  
  \\ 
  &\leq C_{p} \frac{\diam(K)|\sigma|^{1-1/p}}{d_{\sigma, K}^{1/p}} \|\nabla \varphi\|_{L^p(\mathcal{V}_{\sigma, K})},
  \\
  &\leq C_{p, \Xi} h d_\sigma^{-1/p} |\sigma|^{1-1/p} \|\nabla \varphi\|_{L^p(\mathcal{V}_{\sigma, K})},
\end{align*}
so that 
\begin{equation} \label{eq:bound_E_K_sigma}
|E_{\sigma, K}| \leq C_{p, \Xi} h d_\sigma^{-1/p} |\sigma|^{-1/p} \|\nabla \varphi\|_{L^p(\mathcal{V}_\sigma)},
\end{equation}
where we recall that $\mathcal{V}_\sigma = \mathcal{V}_{\sigma, K} \cup \mathcal{V}_{\sigma, L}$. The same reasoning allows to obtain a similar estimate for edges on the boundary $\sigma \in \mathcal{E}_{\mathrm{ext}}$, using that $\varphi(x)=0$ for $x \in \partial \mathcal{D}$ and that $\mathcal{V}_\sigma = \mathcal{V}_{\sigma, K}$ in such case. We may now estimate $\Pi_h L \varphi - L_h P_h \varphi$. For $v \in X(\mathcal{M})$, we have
\begin{align*}
    \left( \Pi_h L \varphi - L_h P_h \varphi, v \right) &= i\sum_{K\in\mathcal{F}} \sum_{\sigma \in \mathcal{E}_K} |\sigma| E_{\sigma, K} \overline{v_K},
    \\
    &= i\sum_{\sigma=K|L} |\sigma|(E_{\sigma, K}\overline{v_K} + E_{\sigma, L}\overline{v_L}) + i\sum_{\sigma \in \mathcal{E}_{\mathrm{ext}}} |\sigma| E_{\sigma, K} \overline{v_K}.
  \end{align*}
  Since $E_{\sigma, K} = -E_{\sigma, L}$, we infer 
  $$
  \left|\left( L_h \Pi_h \varphi - \Pi_h L \varphi, v \right)\right| \leq \sum_{\sigma \in \mathcal{E}} E_\sigma |\sigma| |D_\sigma v|,
  $$
  with $E_\sigma = |E_{\sigma, K}| = |E_{\sigma, L}|$. Using Hölder's inequality with $\frac{1}{p}+\frac{1}{q}=1$, by \eqref{eq:bound_E_K_sigma} we finally get
  \begin{align*}
    \left|\left(  \Pi_h L \varphi - L_h P_h \varphi , v \right)\right| &\leq \sum_{\sigma \in \mathcal{E}} \left(E_\sigma |\sigma|^{1/p} d_{\sigma}^{1/p}\right) \left(|D_\sigma v| \frac{|\sigma|^{1/q}}{d_{\sigma}^{1-1/q}}\right) \leq \left(\sum_{\sigma \in \mathcal{E}} R_\sigma^p |\sigma| d_{\sigma}\right)^{1/p} |v|_{1, q}\\ 
    &\leq C_{p, \Xi} h \left(\sum_{\sigma \in \mathcal{E}}\|\nabla \varphi\|_{L^p(\mathcal{V}_\sigma)}^p\right)^{1/p} |v|_{1, q},
  \end{align*}
  and we conclude using that $\cup_{\sigma \in \mathcal{E}}\mathcal{V}_\sigma = \mathcal{D}$ up to a set of measure zero.
\end{proof} 

\begin{proposition}\label{pr:convergence_V}
  For all $\Xi \in (0, 1]$ and $p > d$, there exists $C_{p, \Xi} > 0$ such that, for all $\Xi$-regular admissible meshes $\mathcal{M}$ and for all $\varphi \in W^{1, p}_0(\mathcal{D})$ and $v \in X(\mathcal{M})$, we have
\[  \left|\left( V_h P_h \varphi - \Pi_h V \varphi, v \right) \right| \leq C_{p, \Xi} h \|\varphi\|_{W^{1,p}} |v|_{1, q}, \]
where $1/p + 1/q = 1$.
\end{proposition}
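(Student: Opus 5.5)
The plan is to compare both sides through the diamond mesh, where the discrete potential actually lives, and to split the error into three pieces, each controlled by an interpolation estimate from Section~\ref{sec:discrete_function_analysis}. Since $v \in X(\mathcal{M})$ and $\Pi_h$ is the $L^2$-orthogonal projection, we have $(\Pi_h V\varphi, v) = (V\varphi, v)$. By the definition of $V_h$,
\[
(V_h P_h\varphi, v) = \left(P_h^\diamond V\, I_h P_h\varphi,\, I_h v\right).
\]
We also use that $(P_h^\diamond V)(P_h^\diamond \varphi) = P_h^\diamond(V\varphi)$ pointwise on diamond cells. This gives the decomposition
\begin{align*}
(V_h P_h\varphi - \Pi_h V\varphi, v) ={}& \left(P_h^\diamond V\,(I_h P_h\varphi - P_h^\diamond\varphi),\, I_h v\right) + \left(P_h^\diamond(V\varphi) - V\varphi,\, I_h v\right) \\
&+ \left(V\varphi,\, I_h v - v\right).
\end{align*}

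\textbf{First term.} Apply H\"older's inequality with exponents $(p,q)$ and bound $\|P_h^\diamond V\|_{L^\infty} \le \|V\|_{L^\infty}$. Proposition~\ref{pr:Ph_Ph_diamond} gives $\|I_h P_h\varphi - P_h^\diamond\varphi\|_{L^p}\le C_p h\|\nabla\varphi\|_{L^p}$, using $\varphi\in W^{1,p}_0$. Proposition~\ref{pr:l2_stability_I} gives $\|I_h v\|_{L^q}\le C_{q,\Xi}\|v\|_{L^q}$.

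\textbf{Second term.} Since $V\in W^{1,\infty}$ and $\varphi\in W^{1,p}$, the product satisfies $V\varphi\in W^{1,p}$ with $\|\nabla(V\varphi)\|_{L^p}\le \|V\|_{W^{1,\infty}}\|\varphi\|_{W^{1,p}}$. The second estimate of Proposition~\ref{pr:diff_Pih_Ph_diamond} then gives $\|P_h^\diamond(V\varphi)-V\varphi\|_{L^p}\le C_p h\|V\|_{W^{1,\infty}}\|\varphi\|_{W^{1,p}}$. The factor $I_h v$ is handled as in the first term. This is where the full $W^{1,p}$ norm of $\varphi$, not only its gradient, enters the statement.

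\textbf{Third term.} H\"older's inequality and the second estimate of Proposition~\ref{pr:l2_stability_I} with exponent $q$ give
\[
\left|(V\varphi, I_h v - v)\right| \le \|V\|_{L^\infty}\|\varphi\|_{L^p}\, \frac{h}{d^{1/q}}\,|v|_{1,q}.
\]

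\textbf{Main obstacle.} The first two terms leave a factor $\|v\|_{L^q}$, and we need to bound it by $|v|_{1,q}$. For this we invoke a discrete Poincar\'e inequality in $L^q$, $\|v\|_{L^q}\le C_{q,\Xi}|v|_{1,q}$, valid for $\Xi$-regular admissible meshes with homogeneous Dirichlet conditions; it is the $L^q$ analogue of \cite[Lemma 9.1]{EymardGallouetHerbin2000}. If only the $L^2$ version is available, I would first try to prove it from the $|\cdot|_{1,1}$ case: apply the discrete $W^{1,1}$ Sobolev--Poincar\'e inequality to $|v|^q$, then use H\"older's inequality edge by edge, which is standard in \cite{CoudiereDiscrete2001}. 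Here $q<d/(d-1)$, so the inequality is well within the Sobolev range. Summing the three bounds gives the claim with a constant depending on $p$, $\Xi$, $\mathcal{D}$ and $\|V\|_{W^{1,\infty}}$.
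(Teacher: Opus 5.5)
Your proof is correct and follows essentially the same three-term decomposition and Hölder argument as the paper. The one cosmetic difference is that the paper writes the middle term as $P_h^\diamond(V\varphi)-\Pi_h^\diamond(V\varphi)$, using that $\Pi_h^\diamond$ is the $L^2$-projection onto $X(\mathcal{M}^*)\ni I_h v$, whereas you use $P_h^\diamond(V\varphi)-V\varphi$; both bounds are covered by Proposition~\ref{pr:diff_Pih_Ph_diamond}. Your explicit use of the discrete Poincaré inequality $\|v\|_{L^q}\le C|v|_{1,q}$, to pass from $\|I_h v\|_{L^q}$ to $|v|_{1,q}$, is a step the paper's proof leaves tacit, and your sketch of it (the $L^1$ Poincaré inequality applied to $|v|^q$, then Hölder edge by edge) is the standard argument.
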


\begin{proof}
  We directly compute, using that $P_h^\diamond(V \varphi)= (P_h^\diamond V) (P_h^\diamond\varphi)$,
  \begin{align*}
    \left( V_h P_h \varphi - \Pi_h V \varphi, v \right) &= \left( V_h P_h \varphi, v \right) - \left( V \varphi, v \right), 
    \\
    &= \left( (P_h^\diamond V) I_h P_h \varphi, I_h v \right) - \left( V \varphi, I_h v \right) + \left( V\varphi, I_h v - v \right),
    \\
    &= \left( (P_h^\diamond V) I_h P_h \varphi - \Pi_h^\diamond(V \varphi), I_h v \right) + \left( V\varphi, I_h v - v \right),
    \\
    &= \left( P_h^\diamond(V \varphi) - \Pi_h^\diamond(V \varphi), I_hv \right) + \left( (P_h^\diamond V) (I_h P_h \varphi - P_h^\diamond \varphi), I_h v \right) 
    \\
    &+ \left( V\varphi, I_h v - v \right),
  \end{align*}
  so by Hölder inequality
  \begin{align*}
  |\left( V_h P_h \varphi - \Pi_h V \varphi, v \right)| &\leq \|P_h^\diamond (V \varphi) - \Pi_h^\diamond(V \varphi)\|_{L^p} \|I_h v\|_{L^q}  
  \\
  &+ \|V\|_{L^\infty} \|I_h P_h \varphi - P_h^\diamond \varphi\|_{L^p} \|I_h v\|_{L^q}+ \|V\|_{L^\infty} \|\varphi\|_{L^p} \|I_h v - v \|_{L^q}.
  \end{align*}
  Note that $V \in W^{1, \infty}(\mathcal{D})$ provides that $V \varphi \in W^{1, p}(\mathcal{D})$. The result then follows from Proposition \ref{pr:l2_stability_I}, Proposition \ref{pr:diff_Pih_Ph_diamond} and Proposition \ref{pr:Ph_Ph_diamond}.
\end{proof}

We now have all the ingredients for the proof of the convergence of the discrete energy. 

\begin{proposition}\label{pr:convergence_energy}
Let $(\mathcal{M}_h)_{h>0}$ be a family of $\Xi$-regular meshes with $\Xi \in (0, 1]$. Then for all $\varphi \in H^1_0(\mathcal{D}) \cap H^s(\mathcal{D})$ with $s > d/2$, we have $E_h(P_h \varphi) \rightarrow E(\varphi)$ as $h\rightarrow 0$. Furthermore, if $s \geq 2$, then
  $$
  |E_h(P_h \varphi) - E(\varphi)| \leq C_{\Xi, \varphi} h,
  $$
  where $C_{\Xi, \varphi}$ depends only on $\varphi$ through its $H^2(\mathcal{D})$-norm.
\end{proposition}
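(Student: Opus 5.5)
We split the energy difference term by term. Writing $u_h := P_h\varphi$,
\[
E_h(u_h) - E(\varphi) = \tfrac12\Bigl[(H_h u_h,u_h) - \textstyle\int_{\mathcal D}\bigl(|\nabla\varphi|^2 + V|\varphi|^2 - \Omega\overline{\varphi}L\varphi\bigr)\Bigr] + \tfrac14\bigl(\|u_h\|_{L^4}^4 - \|\varphi\|_{L^4}^4\bigr).
\]
We first treat the case $s\ge 2$ quantitatively, and then obtain the case $d/2<s<2$ by a density argument.

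\emph{Quartic term.} Since $H^s\hookrightarrow W^{1,p}$ for some $p>d$ (and in particular into $L^\infty$), Proposition~\ref{pr:approx_interpolants} gives $\|P_h\varphi-\varphi\|_{L^4}\le C h\|\nabla\varphi\|_{L^p}$ when $p\ge 4$, which holds for $s\ge 2$, $d\le 3$. The elementary bound $\bigl||a|^4-|b|^4\bigr|\le C|a-b|(|a|^3+|b|^3)$, combined with the $L^\infty$ bound on $\varphi$ and hence on $P_h\varphi$, then yields a contribution of order $Ch$.

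\emph{Quadratic part.} We write $(H_hu_h,u_h) = (H_hP_h\varphi - \Pi_h H\varphi, u_h) + (\Pi_h H\varphi, u_h)$, where $H=-\Delta+V-\Omega L$.
\begin{itemize}
\item The first term is bounded by Propositions~\ref{pr:convergence_laplacian}, \ref{pr:convergence_Lz} and \ref{pr:convergence_V} by $Ch\,\|\varphi\|_{H^2}\,|u_h|_{1,q}$ with $q=p'\le 2$. By the discrete Hölder inequality on the finite measure $\sum_\sigma |\sigma| d_\sigma$, we have $|u_h|_{1,q}\le C|u_h|_{1,2}$, and Proposition~\ref{pr:12_boundedness_Ph} bounds $|u_h|_{1,2}\le C\|\nabla\varphi\|_{L^p}$.
\item The second term equals $(H\varphi, P_h\varphi)$, because $\Pi_h$ is the $L^2$ projection and $P_h\varphi\in X(\mathcal M)$. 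Hence
\[
\bigl|(H\varphi,P_h\varphi) - (H\varphi,\varphi)\bigr| \le \|H\varphi\|_{L^2}\,\|P_h\varphi-\varphi\|_{L^2} \le Ch\,\|\varphi\|_{H^2}^2 .
\]
Finally, $(H\varphi,\varphi)$ equals the continuous quadratic form, by integration by parts with Dirichlet data.
\end{itemize}
Together these give $|E_h(P_h\varphi)-E(\varphi)|\le C_{\Xi,\varphi}h$ when $s\ge2$.

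\emph{Case $d/2<s<2$.} We approximate $\varphi$ in $H^s$ by $\varphi_\varepsilon\in H^2\cap H^1_0$. This is the main obstacle: it requires a density of $H^2\cap H^1_0$ in $H^1_0\cap H^s$ that is compatible with the $P_h$ interpolant on a nonconvex polyhedral domain. For example, one can take $\varphi_\varepsilon\in C_c^\infty(\mathcal D)$, which is dense in $H^1_0\cap H^s$ for $s<3/2$; for larger $s$ one can simply use $s'=\min(s,3/2-)$, still with $s'>d/2$, since $d\le 3$ gives $d/2\le 3/2$.

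We then need the continuity estimate
\[
|E_h(P_h\varphi)-E_h(P_h\varphi_\varepsilon)|\le C\,\|\varphi-\varphi_\varepsilon\|_{W^{1,p}}
\]
uniformly in $h$, with $p>d$ chosen such that $H^{s'}\hookrightarrow W^{1,p}$. Such a $p$ exists only if $s'>d/2$, since $1/p = 1/2 - (s'-1)/d$. The estimate follows from Proposition~\ref{pr:equivalence_h1} (upper bound, applied to the Hermitian form via polarization), Proposition~\ref{pr:12_boundedness_Ph}, and the $L^\infty$ control of $P_h$. Combining this with the continuity of $E$ on $H^1_0\cap L^4$ and the $s\ge2$ result for $\varphi_\varepsilon$, an $\varepsilon/3$ argument concludes.

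One technical point must be checked: Proposition~\ref{pr:12_boundedness_Ph} controls $|P_h\varphi|_{1,p}$, and we need the reduction from $|\cdot|_{1,2}$ to $|\cdot|_{1,p}$ with $p\ge2$. This again follows from Hölder's inequality with the weights $|\sigma|d_\sigma$, whose sum is $d|\mathcal D|$.
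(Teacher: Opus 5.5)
\textbf{The case $s \geq 2$.} Your argument here is the paper's: the same splitting into the consistency term $(\Pi_h H\varphi - H_h P_h\varphi, P_h\varphi)$, the interpolation term $(H\varphi, \varphi - P_h\varphi)$ and the quartic term, handled with the same propositions. You also make a correct point that the paper leaves implicit, since it only asks $\psi_n \in H^2(\mathcal{D})$: the approximants in the density step must vanish on $\partial\mathcal{D}$. This is needed so that Proposition~\ref{pr:12_boundedness_Ph} applies to $\varphi - \varphi_\varepsilon$ and the $H^2$ estimate applies to $\varphi_\varepsilon$.

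\textbf{The gap: the density step fails for $d=3$.} You need $s' < 3/2$ for $C_c^\infty(\mathcal{D})$ to be dense in $H^1_0 \cap H^{s'}$. You also need $s' > d/2$ so that $H^{s'} \hookrightarrow W^{1,p}$ with $p > d$. For $d = 3$ these read $s' < 3/2 < s'$, so no such $s'$ exists. The inequality $d/2 \leq 3/2$ is not enough; you would need it strictly. You also cannot keep $s \in (3/2, 2)$ and approximate in the $H^s$ norm by $C_c^\infty$ functions. For $s > 3/2$ the normal derivative trace is continuous on $H^s(\mathcal{D})$ and vanishes on $C_c^\infty(\mathcal{D})$, so the $H^s$-closure of $C_c^\infty(\mathcal{D})$ is strictly smaller than $H^1_0 \cap H^s$. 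As written, the case $d = 3$ is not covered.

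\textbf{The fix.} The repair is already in your own continuity estimate, which only involves $\|\varphi - \varphi_\varepsilon\|_{W^{1,p}}$. Drop the detour through $H^{s'}$ entirely.
\begin{itemize}
\item Fix $p > d$, $p \geq 2$, with $H^s \hookrightarrow W^{1,p}$.
\item Then $\varphi \in W^{1,p}(\mathcal{D})$ has zero trace, since it lies in $H^1_0$. Hence $\varphi \in W^{1,p}_0(\mathcal{D})$ on the Lipschitz domain $\mathcal{D}$.
\item $C_c^\infty(\mathcal{D})$ is dense in $W^{1,p}_0(\mathcal{D})$ by definition. Pick $\varphi_\varepsilon \in C_c^\infty(\mathcal{D}) \subset H^1_0 \cap H^2$ with $\|\varphi - \varphi_\varepsilon\|_{W^{1,p}} \to 0$.
\end{itemize}
This choice gives three things: the uniform-in-$h$ bound on $|E_h(P_h\varphi) - E_h(P_h\varphi_\varepsilon)|$; $E(\varphi_\varepsilon) \to E(\varphi)$, because $W^{1,p}$-convergence implies $H^1$-convergence on a bounded domain; and the $O(h)$ estimate for each fixed $\varphi_\varepsilon$. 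Your $\varepsilon/3$ argument then closes for both $d = 2$ and $d = 3$.

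Compared with the paper's route (density of $H^2$ in $H^s$ and continuity in the $H^s$ norm), this version needs only the elementary density in $W^{1,p}_0$ and makes the boundary condition explicit. It also avoids the delicate question of whether $H^1_0 \cap H^2$ is dense in $H^1_0 \cap H^s$ on a nonconvex polyhedron.
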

\begin{proof}
  We begin with the second case $\varphi \in H^1_0(\mathcal{D}) \cap H^2(\mathcal{D})$ before treating the more general one using a density argument. Throughout the proof, we will often use without further mention that, for $s>d/2$, $H^s(\mathcal{D})$ injects continuously in $W^{1, p}(\mathcal{D})$ for some $p>d$ and that, for $p_2 > p_1$, we have the norm inequalities
  $$
  \|\cdot\|_{L^{p_2}} \leq C \|\cdot\|_{L^{p_1}} \; \text{and } |\cdot|_{1, p_2} \leq C_\Xi |\cdot|_{1, p_1}.
  $$
  We refer the reader to \cite[Remark 2.2]{BessemoulinChainaisFilbet2015} for the second inequality. Let $\varphi \in H^1_0(\mathcal{D}) \cap H^2(\mathcal{D})$. We write
  \begin{align}
    E(\varphi) - E_h(P_h \varphi) &= \frac{1}{2}\left( H \varphi, \varphi\right) - \frac{1}{2}\left( H_h P_h \varphi, P_h \varphi \right) + \frac{1}{4}\left(\|\varphi\|_{L^4}^4 - \|P_h \varphi\|_{L^4}^4\right), \nonumber
    \\
    &= \frac{1}{2}\left( \Pi_h H \varphi - H_h P_h \varphi, P_h \varphi \right) + \frac{1}{2}\left( H \varphi, \varphi - P_h \varphi \right) +  \frac{1}{4}\left(\|\varphi\|_{L^4}^4 - \|P_h \varphi\|_{L^4}^4\right).\label{eq:decomposition_energy_error}
  \end{align}
  For the first term of \eqref{eq:decomposition_energy_error}, using Proposition~\ref{pr:convergence_laplacian} and Proposition~\ref{pr:12_boundedness_Ph}, we have 
  \begin{equation} \label{eq:first_term_energy}
    |(\Pi_h \Delta \varphi - A_h P_h \varphi, P_h \varphi)| \leq C_\Xi h |P_h \varphi|_{1, 2} \|D^2 \varphi\|_{L^2} \leq C_\Xi h \|\varphi\|_{H^2}^2. 
  \end{equation}
  Then, choosing $p > d$ appropriately with $1/p + 1/q = 1$, Proposition~\ref{pr:convergence_Lz} gives 
  \begin{equation} \label{eq:second_term_energy}
    |(\Pi_h L \varphi - L_h P_h \varphi, P_h \varphi)| \leq C_\Xi h |P_h \varphi|_{1, q} \|\nabla \varphi\|_{L^p}\leq C_\Xi h \|\varphi\|_{H^2}^2. 
  \end{equation}
  Similarly, we find using Proposition~\ref{pr:convergence_V}
  \begin{equation}\label{eq:third_term_energy}
    |(\Pi_h V \varphi - V_h P_h \varphi, P_h \varphi)| \leq C_\Xi h \|\varphi\|_{H^2}^2.
  \end{equation}
  Gathering \eqref{eq:first_term_energy}, \eqref{eq:second_term_energy} and \eqref{eq:third_term_energy} yields
  $$
  |\left( \Pi_h H \varphi - H_h P_h \varphi, P_h \varphi \right)| \leq C_\Xi h \|\varphi\|_{H^2}^2,
  $$
  for the first term of \eqref{eq:decomposition_energy_error}. For the second term of \eqref{eq:decomposition_energy_error}, we simply write using Cauchy--Schwarz inequality and Proposition~\ref{pr:approx_interpolants} that
  \begin{align*}
    |(H \varphi, \varphi - P_h \varphi)| &\leq \|H \varphi\|_{L^2} \|\varphi - P_h \varphi\|_{L^2}\leq C h \|\varphi\|_{H^2}^2.
  \end{align*}
  Finally, for the last term of \eqref{eq:decomposition_energy_error}, we write $\|P_h \varphi\|_{L^4} \leq C \|P_h \varphi\|_{L^\infty} \leq C \|\varphi\|_{L^\infty} \leq C \|\varphi\|_{H^2}$. Together with Proposition~\ref{pr:approx_interpolants} and the continuous injection of $H^2(\mathcal{D})$ in $W^{1, 4}(\mathcal{D})$, this yields 
  \begin{align*}
    \left|\|\varphi\|_{L^4}^4 - \|P_h \varphi\|_{L^4}^4\right| &= \left|\|\varphi\|_{L^4} - \|P_h \varphi\|_{L^4}\right|(\|\varphi\|_{L^4} + \|P_h \varphi\|_{L^4})(\|\varphi\|_{L^4}^2 + \|P_h \varphi\|_{L^4}^2)
    \\
    &\leq C \|\varphi - P_h \varphi\|_{L^4} \|\varphi\|_{H^2}^3 
    \\
    &\leq C h \|\varphi\|_{H^2}^4,
  \end{align*}
  which allows us to conclude that 
  $$
  |E(\varphi) - E_h(P_h \varphi)| \leq C_{\Xi, \varphi} h.
  $$
  
  Now, let $\varphi \in H^1_0(\mathcal{D}) \cap H^s(\mathcal{D})$ with $s \in (d/2, 2)$. We proceed by a diagonal argument using the density of $H^2(\mathcal{D})$ into $H^s(\mathcal{D})$. Let $(\psi_n)_{n \in \N}$ be a sequence of $H^2(\mathcal{D})$ which converges to $\varphi$ in $H^s(\mathcal{D})$. We write, for all $h>0$ and $n \in \N$ that
  \begin{equation} \label{eq:energy_compactness}
    |E_h(P_h \varphi) - E(\varphi)| \leq |E_h(P_h \varphi) - E_h(P_h \psi_n)| + |E_h(P_h \psi_n) - E(\psi_n)|  + |E(\psi_n) - E(\varphi)|.
  \end{equation}  
  We begin by treating the first term in the right-hand side of \eqref{eq:energy_compactness}. We prove that for all $\varepsilon > 0$ there exists $N_1\in \N$ such that for all $n \geq N_1$ and for all $h>0$, we have
  \begin{equation}\label{eq:uniform_continuity}
  |E_h(P_h \varphi) - E_h(P_h \psi_n)| < \frac{\varepsilon}{3}.
  \end{equation}
  Indeed, we have from Proposition \ref{pr:equivalence_h1} and Proposition \ref{pr:12_boundedness_Ph} that 
  \begin{align*}
    |\left( H_h P_h \varphi, P_h \varphi \right) - \left( H_h P_h \psi_n, P_h \psi_n \right)| &= |\left( H_h P_h(\varphi - \psi_n), P_h(\varphi + \psi_n) \right)|,
    \\
    &\leq C_\Xi |P_h(\varphi - \psi_n)|_{1, 2} |P_h(\varphi + \psi_n)|_{1, 2},
    \\
    &\leq C_\Xi \|\varphi - \psi_n\|_{H^s} \|\varphi + \psi_n\|_{H^s}.
  \end{align*}
  Similarly, 
  \begin{align*}
    \left|\|P_h \varphi\|_{L^4}^4 - \|P_h \psi_n\|_{L^4}^4\right| &\leq \|P_h(\varphi - \psi_n)\|_{L^4}\left(\|P_h \varphi\|_{L^4} + \|P_h \psi_n\|_{L^4}\right)\left(\|P_h \varphi\|_{L^4}^2 + \|P_h \psi_n\|_{L^4}^2\right),
    \\
    &\leq C\|\varphi - \psi_n\|_{H^s}\left(\|\varphi\|_{H^s} + \|\psi_n\|_{H^s}\right)\left(\|\varphi\|_{H^s}^2 + \|\psi_n\|_{H^s}^2\right),
  \end{align*}
  and as these estimates are independent of $h$, we infer \eqref{eq:uniform_continuity} as claimed. Now, using the continuity of $E$ in $H^1(\mathcal{D})$, we directly get that there exists $N_2 \in \N$ such that for all $n \geq N_2$, $|E(\psi_n) - E(\varphi)| < \varepsilon/3$. Finally, for all $n \in \N$, $\psi_n \in H^2(\mathcal{D})$, so that there exists $h_0>0$ such that for all $h<h_0$, $|E_h(P_h \psi_n) - E(\psi_n)| < \varepsilon/3$. Combining these inequalities we conclude that, for all $\varepsilon > 0$, by choosing $n = \max(N_1, N_2)$, there exists $h_0 > 0$ such that for all $h<h_0$,
\[  |E_h(P_h \varphi) - E(\varphi)| < \varepsilon.\]
\end{proof}

\subsection{Convergence of discrete minimizers of the energy}

In this section, we consider a sequence of uniformly regular admissible meshes $\left(\mathcal{M}_n\right)_{n\in\N}$ such that $h_n \rightarrow 0$. We begin by proving a compactness and consistency property of the linear part $H_h$ of the energy.

\begin{proposition}\label{pr:compactness_consistency}
  Let $(u_n)_{n \in \N}$ be a sequence such that, for all $n \in \N$, $u_n \in X(\mathcal{M}_n)$. If there exists $C > 0$ such that
  $$
  \left( H_{h_n} u_n, u_n \right) \leq C,
  $$
  for all $n \in \N$, then, up to a subsequence, $(u_n)_{n \in \N}$ converges strongly in $L^2(\mathcal{D})$ to a limit $\psi \in H^1_0(\mathcal{D})$. Furthermore, for this subsequence and for all $\varphi \in H^1_0(\mathcal{D}) \cap H^2(\mathcal{D})$, we have 
  $$
  \lim_{n \rightarrow +\infty} \left( H_{h_n} u_n, P_{h_n} \varphi \right) = \left( H \psi, \varphi \right).
  $$
\end{proposition}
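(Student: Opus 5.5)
The plan has two parts: extract a limit by discrete compactness, then identify the limit of $(H_{h_n}u_n,P_{h_n}\varphi)$ by consistency.

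\textbf{Compactness.} By the lower bound of Proposition~\ref{pr:equivalence_h1}, the hypothesis gives $|u_n|_{1,2}^2 \leq C/C_1$ uniformly in $n$. The discrete Poincaré inequality then bounds $\|u_n\|_{L^2}$ as well. I will invoke the classical discrete Rellich-type compactness theorem for finite volume functions with bounded discrete $H^1_0$ seminorm \cite[Theorem 9.5 / Lemma 9.3]{EymardGallouetHerbin2000}. It gives, up to a subsequence, $u_n \to \psi$ strongly in $L^2(\mathcal{D})$ with $\psi \in H^1_0(\mathcal{D})$; the zero extension of $\psi$ lies in $H^1(\R^d)$.

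The same theorem provides weak convergence of the discrete gradients. Set $\nabla_n u_n \in X(\mathcal{M}_n^*)^d$, defined on $\mathcal{V}_\sigma$ by $d\,\frac{D_{\sigma}u_n}{d_\sigma}n_\sigma$. Then $\nabla_n u_n \rightharpoonup \nabla\psi$ weakly in $L^2$; the factor $d$ is the standard one for the diamond-cell gradient. Since uniformly regular meshes are assumed, this is available.

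\textbf{Consistency.} Fix $\varphi \in H^1_0\cap H^2$. By self-adjointness of $H_{h_n}$ (Propositions~\ref{pr:Lz_self_adjoint} and \ref{pr:boundedness_pot}; $A_h$ and $V_h$ are clearly symmetric), we can move the operator onto the test function:
\[
(H_{h_n}u_n,P_{h_n}\varphi)=\overline{(H_{h_n}P_{h_n}\varphi,u_n)}.
\]
We then write $H_{h_n}P_{h_n}\varphi = \Pi_{h_n}H\varphi + (H_{h_n}P_{h_n}\varphi-\Pi_{h_n}H\varphi)$.

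The consistency error is handled by Propositions~\ref{pr:convergence_laplacian}, \ref{pr:convergence_Lz} and \ref{pr:convergence_V}, with $p>d$ chosen so that $H^2\hookrightarrow W^{1,p}$. Tested against $u_n$, these give a bound $C h_n(|u_n|_{1,2}+|u_n|_{1,q})\|\varphi\|_{H^2}$. For the $|u_n|_{1,q}$ term I need $q\le 2$, i.e.\ $p\ge2$, which holds since $p>d\ge2$. Then $|u_n|_{1,q}\le C_\Xi|u_n|_{1,2}$ by Hölder on the discrete seminorm, using that $\sum_\sigma |\sigma| d_\sigma = d|\mathcal{D}|$. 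Hence the error term is $O(h_n)\to0$.

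The main term is $(\Pi_{h_n}H\varphi,u_n)=(H\varphi,u_n)$, because $\Pi_{h_n}$ is the $L^2$ projection. This converges to $(H\varphi,\psi)$ by strong $L^2$ convergence, since $H\varphi\in L^2$ (using $V\in L^\infty$ and $L\varphi\in L^2$ for bounded $\mathcal{D}$).

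It remains to check $(H\varphi,\psi)=(H\psi,\varphi)$ in the weak sense, where $(H\psi,\varphi)$ means the $H^{-1}$–$H^1_0$ pairing. Taking conjugates, the identity follows from integration by parts against $\psi\in H^1_0$. The term $L$ is symmetric because $R$ is divergence free and $\psi\varphi$ vanishes on $\partial\mathcal{D}$. This yields the claimed limit.

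\textbf{Main obstacle.} The delicate point is the compactness step. Existing finite-volume compactness results are stated for the Dirichlet seminorm, and I reduce to that case through Proposition~\ref{pr:equivalence_h1}. Care is needed that the limit lies in $H^1_0$ and not merely in $H^1$ for a non-convex polyhedral $\mathcal{D}$. I will handle this by extending $u_n$ by zero and bounding translates,
\[
\|u_n(\cdot+\eta)-u_n\|_{L^2(\R^d)}^2\le C|\eta|(|\eta|+h_n)|u_n|_{1,2}^2 ,
\]
as in \cite{EymardGallouetHerbin2000}. Kolmogorov's criterion then gives compactness, and the $H^1(\R^d)$ limit vanishes outside $\mathcal{D}$. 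For a Lipschitz domain, this implies $\psi\in H^1_0(\mathcal{D})$.
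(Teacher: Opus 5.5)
Your proposal is correct and follows the paper's proof essentially step for step: a uniform bound on $|u_n|_{1,2}$ from Proposition~\ref{pr:equivalence_h1} together with the classical finite volume compactness theorem, then self-adjointness of $H_{h_n}$, the consistency estimates of Propositions~\ref{pr:convergence_laplacian}--\ref{pr:convergence_V}, and $(\Pi_{h_n}H\varphi,u_n)=(H\varphi,u_n)$ combined with strong $L^2$ convergence. You also make explicit several points the paper leaves implicit: the bound $|u_n|_{1,q}\leq C|u_n|_{1,2}$ for $q\leq 2$, why $\psi\in H^1_0(\mathcal{D})$, and the identity $\overline{(H\varphi,\psi)}=(H\psi,\varphi)$, which is what you actually need rather than $(H\varphi,\psi)=(H\psi,\varphi)$ as literally written, while the weak convergence of discrete gradients you mention is never used.
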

\begin{proof}
  For the first claim, we know from Proposition \ref{pr:equivalence_h1} that $|u_n|_{1, 2}$ is uniformly bounded. From classical compactness results (see for instance \cite[Theorem 14.2]{EymardGallouetHerbin2000}), we know that $(u_n)_n$ converges in $L^2(\mathcal{D})$ to some $\psi \in H^1_0(\mathcal{D})$ up to a subsequence. For the second claim, we use the fact that $H_{h_n}$ is self-adjoint so that 
  $$
  \left( H_{h_n} u_n, P_{h_n} \varphi \right) = \left( u_n, H_{h_n} P_{h_n} \varphi \right).
  $$
  for all $\varphi \in H^1_0(\mathcal{D}) \cap H^2(\mathcal{D})$. Thus, we have 
  $$
  \left( H_{h_n} P_{h_n} \varphi, u_n \right) - \left( H \varphi, \psi \right) = \left( H_{h_n} P_{h_n} \varphi - \Pi_{h_n} H \varphi, u_n \right) + \left( H \varphi, u_n - \psi \right).
  $$
  For the first term, we proceed as in the proof of Proposition~\ref{pr:convergence_energy} to obtain
  $$
  \left( H_{h_n} P_{h_n} \varphi - \Pi_{h_n} H \varphi, u_n \right) \leq C_{\Xi} h_n |u_n|_{1, 2} \|\varphi\|_{H^2}.
  $$
  The second term is then treated using $\|H\varphi\|_{L^2} \leq C\|\varphi\|_{H^2}$ and the convergence of $u_n$ to $\psi$ in $L^2(\mathcal{D})$.
\end{proof}

We now prove the convergence of discrete critical points towards continuous critical points.

\begin{proposition}\label{pr:cv_critical_points}
  Let $(\uu_n)_{n\in\N}$ be a sequence with $\uu_n \in \mathrm{Crit}_{h_n}$ so that, for all $n\in\N$, there exists $\mu_n \in \R$ such that $\uu_n$ satisfies 
\begin{equation} \label{eq:discrete_minimizer}
 H_{h_n} \uu_n +|\uu_n|^2 \uu_n = \mu_n \uu_n.
\end{equation}
  Furthermore, assume that $(E_{h_n}(\uu_n))_{n \in \N}$ is bounded above. Then, up to a subsequence, $\uu_n$ converges strongly in $L^2(\mathcal{D})$ to $\phi^* \in H^1_0(\mathcal{D})$ which satisfies 
\begin{equation} \label{eq:critical_limit}
  H \phi^* + |\phi^*|^2 \phi^* = \mu \phi^*,
\end{equation}
 for some $\mu \in \R$. Furthermore, we have $\lim_{n \rightarrow +\infty} E_{h_n}(\uu_n) = E(\phi^*)$.
\end{proposition}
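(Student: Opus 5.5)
Since $E_{h_n}(\uu_n)$ is bounded above and $\frac14\|\uu_n\|_{L^4}^4 \geq 0$, we get $(H_{h_n}\uu_n,\uu_n) \leq 2\sup_n E_{h_n}(\uu_n)$. This is a uniform bound, so Proposition~\ref{pr:compactness_consistency} applies. Up to a subsequence, $\uu_n \to \phi^*$ strongly in $L^2(\mathcal{D})$ with $\phi^*\in H^1_0(\mathcal{D})$, and $|\uu_n|_{1,2}$ is bounded by Proposition~\ref{pr:equivalence_h1}. Since $\|\uu_n\|_{L^2}=1$, we also get $\|\phi^*\|_{L^2}=1$.

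Next we bound the multipliers. By \eqref{eq:alternative_formula_mu}, $\mu_n = 2E_{h_n}(\uu_n)+\frac12\|\uu_n\|_{L^4}^4$, so $\mu_n \geq 0$. To bound $\|\uu_n\|_{L^4}$ uniformly we use the discrete Sobolev inequality $\|u\|_{L^4}\leq C_\Xi |u|_{1,2}$, valid for $d\le 3$ (see e.g.\ \cite{EymardGallouetHerbin2000}). Hence $(\mu_n)$ is bounded, and after extracting again $\mu_n\to\mu\in\R$. The same Sobolev bound together with $L^2$ convergence and interpolation gives $\uu_n\to\phi^*$ in $L^p$ for every $p<6$; in particular $|\uu_n|^2\uu_n \to |\phi^*|^2\phi^*$ in $L^{4/3}$, and even in $L^2$ when $d=2$ or $p$ is close to $6$.

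To obtain the limit equation \eqref{eq:critical_limit}, test \eqref{eq:discrete_minimizer} against $P_{h_n}\varphi$ with $\varphi\in H^1_0(\mathcal{D})\cap H^2(\mathcal{D})$. The linear term converges to $(H\phi^*,\varphi)$ by Proposition~\ref{pr:compactness_consistency}. The other two terms converge because $P_{h_n}\varphi\to\varphi$ uniformly and in $L^4$ (Proposition~\ref{pr:approx_interpolants}): $(|\uu_n|^2\uu_n,P_{h_n}\varphi)\to(|\phi^*|^2\phi^*,\varphi)$ and $\mu_n(\uu_n,P_{h_n}\varphi)\to\mu(\phi^*,\varphi)$. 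Since $H^1_0\cap H^2$ is dense in $H^1_0$, $\phi^*$ solves \eqref{eq:critical_limit} weakly.

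The main obstacle is the energy limit $E_{h_n}(\uu_n)\to E(\phi^*)$, which requires strong convergence of the quadratic part and not just weak lower semicontinuity. The strategy is to avoid it through the equation. Testing \eqref{eq:discrete_minimizer} with $\uu_n$ gives $(H_{h_n}\uu_n,\uu_n)=\mu_n-\|\uu_n\|_{L^4}^4$, and testing \eqref{eq:critical_limit} with $\phi^*$ gives $(H\phi^*,\phi^*)=\mu-\|\phi^*\|_{L^4}^4$. The first right-hand side converges to the second, because $\mu_n\to\mu$ and $L^4$ convergence follows from $L^2$ convergence plus the uniform discrete $H^1$ bound. Therefore
\[
E_{h_n}(\uu_n)=\tfrac12\mu_n-\tfrac14\|\uu_n\|_{L^4}^4 \;\longrightarrow\; \tfrac12\mu-\tfrac14\|\phi^*\|_{L^4}^4=E(\phi^*).
\]
The delicate points are the discrete Sobolev embedding into $L^4$ for $d=3$, which is standard for uniformly regular meshes, and the justification that $\phi^*$ is an admissible test function in its own weak equation. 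The latter holds because $\phi^*\in H^1_0$ and $|\phi^*|^2\phi^*\in H^{-1}$ for $d\le3$.
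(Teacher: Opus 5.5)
Your proof is correct and follows essentially the same route as the paper: compactness from Proposition~\ref{pr:compactness_consistency}, discrete Sobolev plus interpolation for $L^4$ convergence, boundedness of $\mu_n$ via \eqref{eq:alternative_formula_mu}, passage to the limit in the tested equation, and the energy limit through the identity $E=\frac12\mu-\frac14\|\cdot\|_{L^4}^4$ at both the discrete and continuous level (you test every term against $P_{h_n}\varphi$, which is slightly cleaner than the paper). One small fix: interpolation gives convergence in $L^p$ for $p<6$, and in particular in $L^4$, only if you use the discrete Sobolev bound in $L^6$ (as the paper does), not merely the $L^4$ bound you state; also, your aside about $L^2$ convergence of the cubic term is not justified for $d=3$, though you never use it.
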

\begin{proof}
  Since $(E_{h_n}(\uu_n))_{n \in \N}$ is bounded above, relation \eqref{eq:discrete_GP_energy} implies that $(H_{h_n}u_n^*,u_n^*)$ is also bounded above, and hence we know from Proposition \ref{pr:compactness_consistency} that $(\uu_n)_{n \in \N}$ converges in $L^2(\mathcal{D})$ up to a subsequence to $\phi^* \in H^1_0(\mathcal{D})$. Furthermore, for this subsequence, we have for all $\varphi \in H^1_0(\mathcal{D}) \cap H^2(\mathcal{D})$
  $$
  \lim_{n \rightarrow +\infty} \left( H_{h_n} \uu_n, P_{h_n} \varphi \right) = \left( H \phi^*, \varphi \right).
  $$  
  For the nonlinear term, we use discrete Sobolev inequalities from \cite{BessemoulinChainaisFilbet2015} to prove that $\uu_n$ converges strongly in $L^4$. Indeed, we have 
  $$
  \|\uu_n\|_{L^6} \leq C_\Xi |\uu_n|_{1, 2},
  $$
  so that, using Lebesgue interpolation, we obtain
  $$
  \|\uu_n - \phi^*\|_{L^4} \leq \|\uu_n - \phi^*\|_{L^2}^{1/4} \|\uu_n - \phi^*\|_{L^6}^{3/4} \leq C \|\uu_n - \phi^*\|_{L^2}^{1/4}\left(|\uu_n|_{1, 2} + \|\phi^*\|_{H^1}\right)^{3/4}.
  $$
  This implies in particular that $\uu_n$ converges in $L^3$ so that $|\uu_n|^2\uu_n$ converges to $|\phi^*|^2 \phi^*$ in $L^1(\mathcal{D})$. Therefore, 
  $$
  \lim_{n \rightarrow +\infty} \left( |\uu_n|^2 \uu_n, \varphi \right) = \left( |\phi^*|^2 \phi^*, \varphi \right),
  $$
  as $\varphi \in H^2(\mathcal{D}) \hookrightarrow L^\infty(\mathcal{D})$. Finally, from \eqref{eq:alternative_formula_mu} we have $\mu_n = 2E_{h_n}(\uu_n) + \frac{1}{2}\|\uu_n\|_{L^4}^4$ so that the sequence $(\mu_n)_{n \in \N}$ is bounded. Therefore, extracting another subsequence if necessary, there exists $\mu \geq 0$ such that $\mu_n \rightarrow \mu$. We deduce that, for all $\varphi \in H^1_0(\mathcal{D}) \cap H^2(\mathcal{D})$, we have
  $$
  \lim_{n \rightarrow +\infty}\mu_n \left( \uu_n, \varphi \right) = \mu \left( \phi^*, \varphi \right).
  $$
  Combining the three limits above together with the equality  \eqref{eq:discrete_minimizer}, we infer
\[  \left( H \phi^*, \varphi \right) +  \left( |\phi^*|^2 \phi^*, \varphi \right) = \mu \left( \phi^*, \varphi \right), \]
  for all $\varphi \in H^1_0(\mathcal{D}) \cap H^2(\mathcal{D})$, so $\phi^*$ solves \eqref{eq:critical_limit}. We now prove that $E_{h_n}(\uu_n)$ converges to $E(\phi^*)$. Taking the inner product of \eqref{eq:critical_limit} with $\phi^*$ and using $\|\phi^*\|_{L^2}=1$, we obtain
\[  \mu = 2E(\phi^*) + \frac{1}{2}\|\phi^*\|_{L^4}^4. \]
  Thus, from $\mu_n = 2E_{h_n}(\uu_n) + \frac{1}{2}\|\uu_n\|_{L^4}^4$ and $\lim_{n \rightarrow \infty} \mu_n = \mu$, we obtain 
  $$
  \lim_{n \rightarrow \infty} 2E_{h_n}(\uu_n) + \frac{1}{2}\|\uu_n\|_{L^4}^4 = 2E(\phi^*) + \frac{1}{2}\|\phi^*\|_{L^4}^4.
  $$
  We then conclude using the convergence of $\uu_n$ to $\phi^*$ in $L^4(\mathcal{D})$ shown earlier in the proof. Note that, from uniqueness of the limit $\mu = 2E(\phi^*) + \frac{1}{2}\|\phi^*\|_{L^4}^4$, we actually find that $(\mu_n)_{n \in \N}$ converges to $\mu$, depending on $\phi^*$, without having to extract another subsequence.
  \end{proof}

We now have all the elements to prove Theorem \ref{th:cv_ground_states}. We first define, for any $\varphi \in H^1_0(\mathcal{D}) \cap H^s(\mathcal{D})$ normalized in $L^2(\mathcal{D})$, the normalized interpolant
$$
\widetilde{P}_{h_n} \varphi \coloneqq \frac{P_{h_n}\varphi}{\|P_{h_n}\varphi\|_{L^2}}.
$$
Choosing $p > d$ such that $H^s(\mathcal{D})$ injects in $W^{1, p}(\mathcal{D})$ and using 
$$
\|P_{h_n}\varphi-\varphi\|_{L^2} \leq C\|P_{h_n}\varphi - \varphi\|_{L^p} \leq C h_n \|\nabla \varphi\|_{L^p},
$$ 
from Proposition~\ref{pr:approx_interpolants} as well as $E_{h_n}(P_{h_n} \varphi) \rightarrow E(\varphi)$ from Proposition~\ref{pr:convergence_energy}, we readily get that $E_{h_n}(\widetilde{P}_{h_n} \varphi) \rightarrow E(\varphi)$.

We are now able to prove the main result of the section.

\begin{proof}[Proof of Theorem \ref{th:cv_ground_states}]
  Let $(\uu_n)_{n \in \N}$ be a sequence of minimizers of $E_{h_n}$. From Proposition~\ref{pr:equivalence_h1}, we know that $E_{h_n}(\uu_n) \geq 0$ for all $n$. Furthermore, we have for any $\varphi \in H^1_0(\mathcal{D}) \cap H^s(\mathcal{D})$ normalized in $L^2(\mathcal{D})$ that
\begin{equation} \label{eq:normalized_interpolant_energy}
  E_{h_n}(\uu_n) \leq E_{h_n}\left(\widetilde{P}_{h_n} \varphi\right),
\end{equation}
  so that, from the convergence of the right-hand side of this bound, we get that $(E_{h_n}(\uu_n))_{n \in \N}$ is bounded. Furthermore, since for all $n\in\N$, $\uu_n$ is a minimizer of $E_{h_n}$ on $\mathbb{S}_{h_n}$, it satisfies \eqref{eq:discrete_minimizer}. Hence, from Proposition \ref{pr:cv_critical_points}, we conclude that the sequence $(\uu_n)_{n \in \N}$ converges in $L^2(\mathcal{D})$ up to a subsequence to some $\phi^* \in H^1_0(\mathcal{D})$, which solves \eqref{eq:critical_limit} and such that $\lim_{n \rightarrow \infty} E_{h_n}(\uu_n) = E(\phi^*)$. It now remains to prove that $\phi^*$ is a minimizer of $E$. Let $\varphi$ in $H^1_0(\mathcal{D}) \cap H^s(\mathcal{D})$ be normalized in $L^2(\mathcal{D})$, then  \eqref{eq:normalized_interpolant_energy} holds for all $n \in \N$, hence passing to the limit, we obtain $E(\phi^*) \leq E(\varphi)$. From the density of $H^1_0(\mathcal{D}) \cap H^s(\mathcal{D})$ into $H^1_0(\mathcal{D})$ and by continuity of $E$ in $H^1_0(\mathcal{D})$, we finally get that $E(\phi^*) \leq E(\varphi)$ for all $\varphi \in H^1_0(\mathcal{D})$ normalized in $L^2(\mathcal D)$, implying that $\phi^*$ is a minimizer of the energy on the $L^2$-unit sphere.
\end{proof}

\section{Normalized gradient flow and spectral gap} \label{sec:gradient_flow}
This section is divided in two parts. In Section~\ref{sec:convergence_gradient_flow}, we work on a fixed admissible mesh and study the long time behaviour of solutions to the gradient flow equation \eqref{eq:gradient_flow}. More specifically, we prove Theorem~\ref{th:exp_convergence}, giving the local exponential convergence of solutions to \eqref{eq:gradient_flow} when the initial data is close to a critical point satisfying Assumption~\ref{ass:quasi_coercivity}. Section~\ref{sec:spectral_gap} is then devoted to study the dependence of Assumption~\ref{sec:spectral_gap} on the mesh. In particular, we show that if $(u^*_n)_{n \in \N}$ is a sequence of discrete critical points converging to a continuous critical point $\phi^*$ as in Theorem~\ref{th:cv_ground_states} and if $\phi^*$ satisfies an equivalent of Assumption~\ref{ass:quasi_coercivity} in the continuous case, then $u^*_n$ satisfies Assumption~\ref{ass:quasi_coercivity} for $n$ large enough.

\subsection{Convergence}\label{sec:convergence_gradient_flow}
Throughout this section, the admissible mesh $\mathcal M$ is fixed and $\uu \in \mathrm{Crit}_h$ denotes a critical point of the discrete energy~$E_h$ such that $E_h'(\uu) = \mu_h \uu$ and which satisfies Assumption~\ref{ass:quasi_coercivity}. Our approach is inspired by the method introduced in \cite{FaouJezequel2018}, the main novelties being that $d\in \left\{ 2,3\right\}$ and $\Omega \neq 0$, so minimizers are generally not real-valued. Denoting by $u(t)$ a solution of the normalized gradient flow~\eqref{eq:gradient_flow}, a direct consequence is that, for $u(0)$ close to $\uu$, it may be expected that $u(t)$ will converge to $\uu$ only up to phase, requiring a careful treatment of the phase invariance of \eqref{eq:discrete_GP_energy}. 

We recall the notation $T_{\uu}^\C \mathbb{S}_h = \enstq{v \in X(\mathcal{M})}{ \langle v, \uu\rangle = \langle v, i\uu\rangle = 0}$, and  briefly describe our strategy of proof. For all $t \geq 0$, we want to find an angle $\theta(t)$ so that the projection $v(t) = u(t) - \langle u(t), e^{i\theta(t)}\uu \rangle e^{i\theta(t)} \uu$ converges exponentially to zero as $t \to \infty$. Such angle $\theta(t)$ is chosen so that $v(t)$ is not only orthogonal to $e^{i\theta(t)}\uu$, but also to $ie^{i\theta(t)}\uu$, hence $v(t) \in T_{\uu}^\C \mathbb{S}_h$ for all $t \geq 0$ so we can use the spectral gap property provided by Assumption~\ref{ass:quasi_coercivity} to infer exponential convergence. The first part of the proof is therefore devoted to rigorously establishing the existence of such a function $\theta$, while the second part analyzes the equation satisfied by the projected solution $v$.

We denote the unit circle by $\T = \R/(2\pi\Z)$ and the related projection map $\Pi_\T : \R \rightarrow \T$. We also introduce $\tilde{\mathcal{B}}_\delta(\uu) = \enstq{u \in X(\mathcal{M})}{ |u - \uu|_{1, 2} < \delta}$ for $\delta > 0$ and define the pseudo-distances (see also \cite{ZhangHenning2025})
\[ d_0 (u, v) \coloneqq \min_{\alpha \in \T}\|u - e^{i\alpha}v\|_{L^2} \quad \text{and} \quad d_1 (u, v) \coloneqq \min_{\alpha \in \T}|u - e^{i\alpha}v|_{1,2} \]
on $X(\mathcal{M})$ as well as the tubular neighborhood $\tilde{\mathcal{T}}_\delta(\uu) \coloneqq \{u \in \mathbb{S}_h \mid d_1(u, \uu) < \delta\}$. The corresponding neighborhoods on the unit sphere are given by $\mathcal{B}_\delta(\uu) \coloneqq \tilde{\mathcal{B}}_\delta(\uu) \cap \mathbb{S}_h$ and $\mathcal{B}_\delta(\uu) \coloneqq \tilde{\mathcal{B}}_\delta(\uu) \cap \mathbb{S}_h$.

Throughout this section, for a function $R : X(\mathcal{M}) \rightarrow X(\mathcal{M})$ and an integer $p\in\N$, we say that $R(v) = \mathcal{O}(|v|_{1, 2}^p)$ if there exists $\delta > 0$ and $C \geq 0$ such that, for all $|v|_{1, 2} < \delta$, we have $\|R(v)\|_{L^2} \leq C |v|_{1, 2}^p$. Similarly, for $R : X(\mathcal{M}) \rightarrow \R$, we say that $R(v) = \mathcal{O}(|v|_{1, 2}^p)$ if there exists $\delta > 0$ and $C > 0$ such that, for all $|v|_{1, 2} < \delta$, we have $|R(v)| \leq C |v|_{1, 2}^p$. Moreover, all upper constants, generically denoted by $C>0$ for simplicity, may only depend on $|\uu|_{1, 2}$ as well as other fixed parameters of the model \eqref{eq:GP_energy}, but are uniform with respect to $h>0$. As such, if we have a sequence of discrete minimizers on different meshes converging to a continuous minimizer as in Theorem \ref{th:cv_ground_states}, all such constants are uniformly bounded.
 
Finally, from Proposition~\ref{pr:equivalence_h1}, the expression of $E''_h(\uu)$ in \eqref{eq:energy_hessian} and Assumption~\ref{ass:quasi_coercivity}, one concludes that the norm induced by $E''_h(\uu)-\mu\Id$ is equivalent to the discrete $H^1_0$-norm on $T_{\uu}^\C \mathbb{S}_h$, that is, there exists $K_1$ and $K_2$ such that 
\begin{equation}
  \label{eq:equivalenceS}
K_1 |v|_{1, 2}^2 \leq \langle (E_h''(\uu) - \mu\Id) v, v\rangle \leq K_2 |v|_{1, 2}^2,
\end{equation}
for all $v \in T_{\uu}^\C \mathbb{S}_h$. This fact will be used extensively in this section.

We start by proving the invariance of the tangent space $T_{\uu}^\C \mathbb{S}_h$ by phase shifts.

\begin{lemma}\label{lemma:invariance_theta_subspace}
    For any $\alpha \in \R$, we have $T_{e^{i\alpha}\uu}^\C \mathbb{S}_h = T_{\uu}^\C \mathbb{S}_h$. Furthermore, we have 
    $$
    \inf_{v \in T_{\uu}^\C \mathbb{S}_h\backslash\{0\}} \frac{\left\langle \left( E_h''(e^{i\alpha}\uu) - \mu_h \Id \right) v, v\right\rangle}{\|v\|_{L^2}^2} = \inf_{v \in T_{\uu}^\C \mathbb{S}_h\backslash\{0\}} \frac{\left\langle \left( E_h''(\uu) - \mu_h \Id \right) v, v\right\rangle}{\|v\|_{L^2}^2}.
    $$
\end{lemma}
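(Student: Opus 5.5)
The argument is direct and rests on two facts: real-linearity of the phase multiplication $v \mapsto e^{i\alpha}v$, and its isometry for the real inner product $\langle\cdot,\cdot\rangle$.

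\textbf{Invariance of the tangent space.} Write $T_{e^{i\alpha}\uu}^\C\mathbb{S}_h = \enstq{v}{(v, e^{i\alpha}\uu)=0}$, since $\langle v,w\rangle = \langle v, iw\rangle = 0$ is equivalent to the complex product $(v,w)$ vanishing. Because $(v,e^{i\alpha}\uu) = e^{-i\alpha}(v,\uu)$, the two conditions coincide, which gives the first claim.

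\textbf{Equality of the infima.} We first note that $\mu_h(e^{i\alpha}\uu) = \mu_h(\uu)$. This follows from \eqref{eq:alternative_formula_mu}, since $E_h$ and the $L^4$ norm are phase invariant: $H_h$ is $\C$-linear, so $(H_h e^{i\alpha}u, e^{i\alpha}u) = (H_h u,u)$. Next, using \eqref{eq:energy_hessian}, we check the conjugation identity
\[
E_h''(e^{i\alpha}\uu)(e^{i\alpha}w) = e^{i\alpha}E_h''(\uu)w .
\]
Indeed, $H_h$ commutes with $e^{i\alpha}$, the modulus satisfies $|e^{i\alpha}\uu|^2 = |\uu|^2$, and
\[
2e^{i\alpha}\uu\,\Re\bigl(\overline{e^{i\alpha}\uu}\,e^{i\alpha}w\bigr) = e^{i\alpha}\,2\uu\,\Re(\overline{\uu}w).
\]
Since multiplication by $e^{i\alpha}$ preserves both $\langle\cdot,\cdot\rangle$ and $\|\cdot\|_{L^2}$, we obtain, for $v = e^{i\alpha}w$,
\[
\frac{\langle (E_h''(e^{i\alpha}\uu)-\mu_h)v,v\rangle}{\|v\|_{L^2}^2} = \frac{\langle (E_h''(\uu)-\mu_h)w,w\rangle}{\|w\|_{L^2}^2}.
\]

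\textbf{Conclusion.} Finally, $w\mapsto e^{i\alpha}w$ is a bijection of $T_{\uu}^\C\mathbb{S}_h$ onto itself: $(e^{i\alpha}w,\uu) = e^{i\alpha}(w,\uu)$, so it preserves the defining condition, and its inverse is multiplication by $e^{-i\alpha}$. The two Rayleigh quotient sets are therefore identical, and the infima are equal.

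The only point needing care is the nonlinear term $2u\Re(\overline{u}v)$ in the Hessian, which is merely $\R$-linear in $v$. The conjugation identity has to be verified with the phase applied simultaneously to $\uu$ and $w$; applying it to $v$ alone would break the argument. Everything else is routine.
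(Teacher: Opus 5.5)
Your proof is correct and follows essentially the same route as the paper. For the first claim you use $(v,e^{i\alpha}\uu)=e^{-i\alpha}(v,\uu)$, which is the complex form of the paper's $\cos\alpha/\sin\alpha$ computation with the real product; for the second you combine the conjugation identity $E_h''(e^{i\alpha}\uu)v = e^{i\alpha}E_h''(\uu)(e^{-i\alpha}v)$ with the fact that multiplication by $e^{i\alpha}$ is a $\langle\cdot,\cdot\rangle$-isometric bijection of $T_{\uu}^\C\mathbb{S}_h$, exactly as the paper does. Your extra check that $\mu_h(e^{i\alpha}\uu)=\mu_h(\uu)$ is harmless but not needed, since $\mu_h$ in the statement is the fixed scalar $\mu_h(\uu)$ on both sides.
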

\begin{proof}
    For the first statement, one computes that $\langle v, e^{i\alpha} \uu\rangle = \cos \alpha \langle v, \uu \rangle + \sin \alpha \langle v, i \uu \rangle$ and $\langle v, i e^{i\alpha} \uu \rangle = -\sin \alpha \langle v, \uu \rangle + \cos \alpha \langle v, i \uu \rangle$. Therefore, $\langle v, e^{i\alpha}\uu \rangle = \langle v, i e^{i\alpha} \uu \rangle = 0$ if and only if $\langle v, \uu\rangle = \langle v, iu^* \rangle = 0$.

    For the second statement, one readily computes that $E_h''(e^{i\alpha}\uu) v = e^{i\alpha} E_h''(\uu) e^{-i\alpha}v$. Furthermore, we remark that $e^{-i\alpha} v \in T_{\uu}^\C \mathbb{S}_h$ if and only if $v \in T_{e^{i\alpha}\uu}^\C \mathbb{S}_h = T_{\uu}^\C \mathbb{S}_h$. We conclude by combining these two observations.
\end{proof}

We now define $P_{\uu}^\C$ as the $L^2$-orthogonal projection on $T_{\uu}^\C \mathbb{S}_h$. From Lemma \ref{lemma:invariance_theta_subspace}, we therefore find that for any $\alpha \in \R$,
\begin{equation} \label{eq:proj_u_star_per}
P_{\uu}^\C u = u - \langle u, e^{i\alpha} \uu\rangle e^{i\alpha}\uu - \langle u, i e^{i\alpha}\uu \rangle i e^{i\alpha}\uu .
\end{equation}

\begin{lemma}\label{lemma:existence_theta}
    There exists $\rho > 0$ and a $\mathcal{C}^1$ map $\theta : \tilde{\mathcal{T}}_\rho(\uu) \rightarrow \T$ such that 
\begin{equation} \label{eq:theta_u_star}
    \langle u, ie^{i\theta(u)}\uu\rangle = 0,  
\end{equation}
    for all $u \in \tilde{\mathcal{T}}_\rho(\uu)$. Moreover, there exists $C>0$ such that, for all $u \in \tilde{\mathcal{T}}_\rho(\uu)$, we have 
\begin{equation} \label{eq:boundedness_minimizer_rotated}
 |u - e^{i\theta(u)}\uu|_{1, 2} \leq C d_1(u, \uu) \quad \text{and} \quad \|u - e^{i\theta(u)}\uu\|_{L^2} \leq C d_0(u, \uu).
\end{equation}
\end{lemma}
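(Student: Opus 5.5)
The plan is to apply the implicit function theorem to
\[
F(u,\alpha) \coloneqq \langle u, i e^{i\alpha}\uu\rangle, \qquad (u,\alpha) \in X(\mathcal{M})\times\R .
\]
The map $F$ is smooth, real-valued and $2\pi$-periodic in $\alpha$. At the base point we have $F(\uu,0)=\langle \uu, i\uu\rangle = \Re(-i\|\uu\|_{L^2}^2)=0$. For the derivative we get
\[
\partial_\alpha F(u,\alpha) = \langle u, -e^{i\alpha}\uu\rangle, \qquad \partial_\alpha F(\uu,0) = -\|\uu\|_{L^2}^2 = -1 \neq 0 .
\]
So near $\uu$ there is a unique smooth $\theta$ with $\theta(\uu)=0$. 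Rather than patching local inverse functions, we will use the identity
\[
\langle u, e^{i\alpha}\uu\rangle + i\langle u, i e^{i\alpha}\uu\rangle = e^{-i\alpha}(u,\uu),
\]
which follows from $\Re(\overline{i z}) = \Im \bar z$ after bookkeeping. Hence \eqref{eq:theta_u_star} holds exactly when $e^{-i\alpha}(u,\uu)$ is real, and we may define
\[
\theta(u) \coloneqq \arg (u,\uu) \in \T ,
\]
so that $e^{-i\theta(u)}(u,\uu) = |(u,\uu)| >0$. This is well defined and $\mathcal{C}^1$ (indeed real-analytic) wherever $(u,\uu)\neq 0$, since $\arg$ is smooth on $\C\setminus\{0\}$ as a map into $\T$ and $u\mapsto (u,\uu)$ is real-linear.

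It remains to check that $(u,\uu)\neq 0$ on a tubular neighbourhood. Take $u\in\tilde{\mathcal{T}}_\rho(\uu)$ and let $\beta$ realize $d_1(u,\uu)$. By the discrete Poincaré inequality $\|w\|_{L^2}\le C_{\mathcal{D}}|w|_{1,2}$,
\[
|(u,\uu)| = |(e^{-i\beta}u,\uu)| \ge \|\uu\|_{L^2}^2 - \|e^{-i\beta}u-\uu\|_{L^2} \ge 1 - C_{\mathcal{D}}\rho > \tfrac12
\]
for $\rho < 1/(2C_{\mathcal{D}})$. This $\rho$ is independent of $h$.

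For the estimates \eqref{eq:boundedness_minimizer_rotated}, the key point is the maximization property. With $\theta(u)$ chosen so that $\langle u,e^{i\theta}\uu\rangle = |(u,\uu)|$, and using $\|u\|_{L^2}=\|\uu\|_{L^2}=1$,
\[
\|u-e^{i\alpha}\uu\|_{L^2}^2 = 2 - 2\,\Re\bigl(e^{-i\alpha}(u,\uu)\bigr),
\]
which is minimized over $\alpha$ exactly at $\alpha=\theta(u)$. This gives the $L^2$ bound with $C=1$, and even with equality: $\|u-e^{i\theta(u)}\uu\|_{L^2}=d_0(u,\uu)$.

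The $H^1$ bound is the main obstacle, because $\theta(u)$ minimizes the $L^2$ distance, not the $|\cdot|_{1,2}$ distance. Let $\beta$ be the $d_1$-minimizing phase. Then
\[
|u-e^{i\theta}\uu|_{1,2} \le |u-e^{i\beta}\uu|_{1,2} + |e^{i\beta}-e^{i\theta}|\,|\uu|_{1,2},
\]
so it suffices to bound $|e^{i\beta}-e^{i\theta}|$. We have
\[
|e^{i\beta}-e^{i\theta}| = \|e^{i\beta}\uu-e^{i\theta}\uu\|_{L^2} \le \|u-e^{i\beta}\uu\|_{L^2} + \|u-e^{i\theta}\uu\|_{L^2} \le 2\|u-e^{i\beta}\uu\|_{L^2} \le 2C_{\mathcal{D}}\, d_1(u,\uu),
\]
where the middle inequality uses the $L^2$-optimality of $\theta$ and the last uses Poincaré. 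This yields
\[
|u-e^{i\theta(u)}\uu|_{1,2} \le \bigl(1+2C_{\mathcal{D}}|\uu|_{1,2}\bigr)\, d_1(u,\uu),
\]
with a constant depending only on $|\uu|_{1,2}$ and $\mathcal{D}$, uniformly in $h$, as required by the conventions of this section.
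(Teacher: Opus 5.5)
Your proof is correct, but it takes a different route from the paper. The paper applies the implicit function theorem to $F(u,\theta)=\langle u, ie^{i\theta}\uu\rangle$ at $(\uu,0)$ to get a local map $\theta_0$ on a ball $\tilde{\mathcal{B}}_\rho(\uu)$. It transports $\theta_0$ to balls around each $e^{i\alpha}\uu$ via $\theta_\alpha(u)=\theta_0(e^{-i\alpha}u)+\alpha$. It then proves that these local branches agree modulo $2\pi$ on overlaps, using disjointness of the balls around $\uu$ and $e^{-i\alpha}\uu$ for $|\alpha|\ge r/2$ together with local uniqueness in the implicit function theorem. Finally, it derives \eqref{eq:boundedness_minimizer_rotated} from a local Lipschitz bound on $\theta_0$.

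You instead use the identity $\langle u,e^{i\alpha}\uu\rangle + i\langle u, ie^{i\alpha}\uu\rangle = e^{-i\alpha}(u,\uu)$. This gives the global closed form $\theta(u)=\arg(u,\uu)$, which is the same branch the paper selects, namely the one with $r(u)=\langle u,e^{i\theta(u)}\uu\rangle>0$. As a result:
\begin{itemize}
\item the patching and compatibility step disappears, and the only thing to check is $(u,\uu)\neq 0$, which the discrete Poincaré inequality gives for $\rho<1/C_{\mathcal{D}}$;
\item positivity of $r(u)$, which the paper has to enforce later by shrinking $\rho$ in Lemma~\ref{lemma:projection_forumla}, becomes automatic.
\end{itemize}

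Your route is also more quantitative. Since $\theta(u)$ is exactly the $L^2$-optimal phase, the $L^2$ estimate holds with equality. The $|\cdot|_{1,2}$ estimate holds with the explicit constant $1+2C_{\mathcal{D}}|\uu|_{1,2}$, and $\rho$ is independent of the mesh. This directly supports the uniformity-in-$h$ claims made later in the section, whereas in the paper these rely on tracking the implicit function theorem constants. The paper's implicit-function-plus-patching scheme is more generic: it is the template for symmetry groups with no closed-form optimal alignment, for instance adding rotations when $V$ and $\mathcal{D}$ are axisymmetric. For the phase action, however, your argument is shorter and sharper.

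Two minor remarks. First, your formula $\|u-e^{i\alpha}\uu\|_{L^2}^2 = 2-2\Re\bigl(e^{-i\alpha}(u,\uu)\bigr)$ uses $\|u\|_{L^2}=1$. If $\tilde{\mathcal{T}}_\rho(\uu)$ is meant as a subset of all of $X(\mathcal{M})$, as the paper's proof on balls $\tilde{\mathcal{B}}_\rho(\uu)$ and the relation $\mathcal{T}_\rho(\uu)=\tilde{\mathcal{T}}_\rho(\uu)\cap\mathbb{S}_h$ suggest, replace $2$ by $\|u\|_{L^2}^2+1$. That term does not depend on $\alpha$, so the minimization argument is unchanged. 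Second, your opening implicit-function-theorem paragraph is never used and can be dropped.
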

\begin{proof}
    We briefly sketch the proof. We begin by using an implicit function argument to build a map $\theta_0 : \tilde{\mathcal{B}}_\rho(\uu) \rightarrow \R$ and then use $\theta_0$ to define maps $\theta_\alpha : \tilde{\mathcal{B}}_\rho(e^{i\alpha} \uu) \rightarrow \R$ for any $\alpha \in \T$. We then prove that, for $\rho$ sufficiently small, these maps are all compatible when their images are projected onto $\T$, which allows us to define a map $\theta$ on the tubular neighborhood $\tilde{\mathcal{T}}_\rho(\uu)$.

    Let $F : X(\mathcal{M}) \times \R \rightarrow \R$ be given by 
    $$
    F(u, \theta) = \langle u, i e^{i\theta}\uu \rangle.
    $$
    We see that $F$ verifies the implicit function theorem condition around the pair $(\uu, 0)$, as $F(\uu, 0)=0$ and $\partial_\theta F(u, \theta) = -\langle u, e^{i\theta}\uu\rangle$, so that $\partial_\theta F(\uu, 0) = -1$. We thus deduce the existence of an open interval $I\subset\R$ containing $0$, $\rho>0$ and a $\mathcal C^1$ function $\theta_0$ from $\tilde{\mathcal{B}}_\rho(\uu)$ to $I$ such that
      \begin{equation*}
        \forall \theta \in I,\forall u\in \tilde{\mathcal{B}}_\rho(\uu),\qquad
        \left(
          F(u,\theta)=0 \Longleftrightarrow \theta=\theta_0(u)
        \right).
      \end{equation*}
      For later in the proof, we note that one may choose a smaller $\rho$ while keeping the same $I$ where uniqueness of solutions to $F(u, \theta) = 0$ is ensured.
\noindent
For $\alpha \in \R$ and $u \in \tilde{\mathcal{B}}_\rho(e^{i\alpha}\uu)$, we have $e^{-i\alpha}u \in \tilde{\mathcal{B}}_\rho(\uu)$. Therefore, we can define $\theta_\alpha : \tilde{\mathcal{B}}_\rho(e^{i\alpha}\uu) \rightarrow I + \alpha$ by 
    $$
    \theta_\alpha(u) = \theta_0(e^{-i\alpha}u) + \alpha.
    $$
    It is immediate that $\theta_\alpha(e^{i\alpha}\uu) = \alpha$ and $F(u, i e^{i\theta_\alpha(u)}\uu) = 0$ for all $u \in \tilde{\mathcal{B}}_\rho(e^{i\alpha}\uu)$. It remains to show that, if $u \in \tilde{\mathcal{T}}_\rho(\uu)$, then
    $$
    \theta(u) := \Pi_\T(\theta_\alpha(u))
    $$
    is independent of the choice of $\alpha \in \R$ such that $u \in \tilde{\mathcal{B}}_\rho(e^{i\alpha}\uu)$. This is equivalent to showing that, if $u \in \tilde{\mathcal{B}}_\rho(e^{i\alpha_1}\uu) \cap \tilde{\mathcal{B}}_\rho(e^{i\alpha_2}\uu)$, then 
    $$
    \theta_{\alpha_1}(u) = \theta_{\alpha_2}(u) \pmod{2\pi}.
    $$
    To simplify the problem, we define $\alpha = \alpha_2 - \alpha_1$ and write $w = e^{-i\alpha_1}u$. This allows us to reduce to proving that, for all $w \in \tilde{\mathcal{B}}_\rho(\uu) \cap \tilde{\mathcal{B}}_\rho(e^{-i\alpha}\uu)$, we have 
    $$
    \theta_{0}(w) = \theta_{0}(e^{i\alpha} w) - \alpha \pmod{2\pi}.
    $$
    Also, since $\theta_{0}(e^{i(\alpha + 2k\pi)}w) = \theta_0(e^{i\alpha}w)$ for $k \in \Z$, it is sufficient to prove that, for $\alpha \in [-\pi, \pi)$, we have 
\begin{equation} \label{eq:theta_0}
\theta_0(w) = \theta_0(e^{i\alpha} w) - \alpha,
\end{equation}
    if $w \in \tilde{\mathcal{B}}_\rho(\uu) \cap \tilde{\mathcal{B}}_\rho(e^{-i\alpha}\uu)$. As $I$ is a neighborhood of 0, we may choose $r \in (0, \pi)$ such that $(-r, r) \subset I$ and set
    $$
    m \coloneqq \min_{\pi \geq |\alpha| \geq r/2}|u^* - e^{-i\alpha}u^*|_{1, 2} > 0.
    $$ 
    Note that $r$ and $m$ do not depend on $\rho$. Up to choosing $\rho \leq m/2$, we now show that $\tilde{\mathcal{B}}_\rho(\uu) \cap \tilde{\mathcal{B}}_\rho(e^{-i\alpha}\uu)$ is empty whenever $\pi \geq |\alpha| \geq r/2$. Indeed, we have for such $\alpha$ and $w \in \tilde{\mathcal{B}}_\rho(\uu)$
\[      |u - e^{-i\alpha}\uu|_{1, 2} \geq |u^* - e^{-i\alpha}\uu|_{1, 2} - |u - \uu|_{1, 2}\geq m - \rho \geq \rho, \]
    so that $w \notin \tilde{\mathcal{B}}_\rho(e^{-i\alpha}\uu)$. Suppose now that $w \in \tilde{\mathcal{B}}_\rho(\uu) \cap \tilde{\mathcal{B}}_\rho(e^{-i\alpha}\uu)$ for $|\alpha|\leq\pi$. From the argument written just above, we have $|\alpha| < r/2$. Moreover, we may once again choose $\rho$ small enough so that $\theta_0(\tilde{\mathcal{B}}_\rho(\uu)) \subset (-r/2, r/2)$. From this, we obtain 
    $$
    \theta_0(e^{i\alpha}w) - \alpha \in (-r, r) \subset I.
    $$
    A simple computation shows that $F(w, \theta_0(e^{i\alpha}w) - \alpha) = 0$. Since by definition, $\theta = \theta_0(w)$ is the only solution in $I$ to $F(w, \theta) = 0$, we deduce that $\theta_0(e^{i\alpha}w) - \alpha = \theta_0(w)$, which proves~\eqref{eq:theta_0}.
    
    We now turn to inequalities \eqref{eq:boundedness_minimizer_rotated}. Suppose $u \in \tilde{\mathcal{T}}_\rho(\uu)$ and define $w_\alpha = e^{-i\alpha}u$ for all $\alpha \in \T$ such that $u \in \tilde{\mathcal{B}}_\rho(e^{i\alpha}\uu)$. Then, we have $w_\alpha \in \tilde{\mathcal{B}}_\rho(\uu)$ and $|u - e^{i\theta(u)}\uu|_{1, 2} = |w_\alpha - e^{i\theta_0(w_\alpha)}\uu|_{1,2}$, as $\theta(u) = \Pi_\T(\theta_0(w_\alpha)-\alpha)$. We now write 
    \begin{align*}
      |w_\alpha - e^{i\theta_0(w_\alpha)}\uu|_{1, 2} &\leq |w_\alpha - \uu|_{1, 2} + |\uu - e^{-i\theta_0(w_\alpha)}\uu|_{1, 2}
      \\
      &\leq |w_\alpha - \uu|_{1, 2} + C|\theta_0(w)|.
    \end{align*}
    where $C=C(|u^*|_{1,2})$. Then, since $\theta_0$ is $C^1$, we may choose $\rho$ small enough so that $\theta_0$ is globally Lipschitz on $\tilde{\mathcal{B}}_\rho(\uu)$. We infer that 
    $$
    |\theta_0(w_\alpha)| = |\theta_0(w_\alpha) - \theta_0(\uu)| \leq C|w_\alpha - \uu|_{1, 2},
    $$
    so that, using $|w_\alpha - \uu|_{1, 2} = |u - e^{i\alpha}\uu|_{1, 2}$ and $|w_\alpha - e^{i\theta_0(w_\alpha)}\uu|_{1, 2} = |u - e^{i\theta(u)}\uu|_{1, 2}$, we obtain 
    $$
    |u - e^{i\theta(u)}\uu|_{1, 2} \leq C|u - e^{i\alpha}\uu|_{1, 2},
    $$
    where $C$ is independent of $\alpha$. We may then obtain the leftmost inequality in \eqref{eq:boundedness_minimizer_rotated} by taking the minimum of the right-hand side over $\alpha \in \T$. The other inequality in \eqref{eq:boundedness_minimizer_rotated} is proven identically. For the remainder of the section, we note in passing that one can reduce $\rho$ while keeping the same constant $C$ in \eqref{eq:boundedness_minimizer_rotated}.
\end{proof}

 For $\rho$ as in Lemma~\ref{lemma:existence_theta}, we also define the mapping $r : \mathcal{T}_\rho(\uu) \rightarrow \R$
\begin{equation} \label{eq:def_r}
r(u) = \langle u, e^{i\theta(u)}\uu \rangle.
\end{equation}
Here, we recall that $\mathcal{T}_\rho(\uu) = \tilde{\mathcal{T}}_\rho(\uu) \cap \mathbb{S}_h$ is the tubular neighborhood of $\uu$ restricted to the $L^2$-sphere.

\begin{lemma}\label{lemma:projection_forumla}
    For $u \in \mathcal{T}_\rho(\uu)$, define $v = u - r(u)e^{i\theta(u)}\uu$. Then, up to choosing smaller $\rho$ if necessary, we have $v = P_{\uu}^\C u$ and 
    $$
    r(u) = \sqrt{1-\|v\|_{L^2}^2}.
    $$
    Moreover, there exists $C_1, C_2>0$ such that, for all $u \in \mathcal{T}_\rho(\uu)$, we have 
    \[ C_1 d_1(u, \uu) \leq |v|_{1, 2} \leq C_2 d_1(u, \uu), \quad \text{and} \quad C_1 d_0(u, \uu) \leq \|v\|_{L^2} \leq C_2 d_0(u, \uu).\]
\end{lemma}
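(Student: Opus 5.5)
We first identify $v$ with $P_{\uu}^\C u$. By \eqref{eq:proj_u_star_per} with $\alpha = \theta(u)$, we have
\[ P_{\uu}^\C u = u - \langle u, e^{i\theta(u)}\uu\rangle e^{i\theta(u)}\uu - \langle u, ie^{i\theta(u)}\uu\rangle ie^{i\theta(u)}\uu. \]
The last coefficient vanishes by \eqref{eq:theta_u_star}, and the middle coefficient is $r(u)$ by \eqref{eq:def_r}, so $v = P_{\uu}^\C u$.

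For the formula for $r(u)$, we use the orthogonal decomposition $u = v + r(u)e^{i\theta(u)}\uu$. Since $v \perp e^{i\theta(u)}\uu$ in $\langle\cdot,\cdot\rangle$ and $\|\uu\|_{L^2} = \|u\|_{L^2} = 1$, Pythagoras gives $1 = \|v\|_{L^2}^2 + r(u)^2$. It remains to show $r(u) > 0$, which is where we may shrink $\rho$. We write
\[ r(u) = 1 + \langle u - e^{i\theta(u)}\uu, e^{i\theta(u)}\uu\rangle \geq 1 - \|u - e^{i\theta(u)}\uu\|_{L^2}. \]
By the discrete Poincaré inequality and \eqref{eq:boundedness_minimizer_rotated}, the last term is at most $C_{\mathcal D} C\, d_1(u,\uu) < C_{\mathcal D} C \rho$. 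Taking $\rho$ small therefore gives $r(u) \geq 1/2$, and hence $r(u) = \sqrt{1-\|v\|_{L^2}^2}$.

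For the upper bounds, we write
\[ v = (u - e^{i\theta(u)}\uu) + (1 - r(u))e^{i\theta(u)}\uu. \]
The first term is controlled by $C d_1(u,\uu)$ in $|\cdot|_{1,2}$, and by $C d_0(u,\uu)$ in $L^2$, thanks to \eqref{eq:boundedness_minimizer_rotated}. For the second term, we use that $r(u)$ is the real inner product with a unit vector:
\[ 1 - r(u) = \tfrac12\|u - e^{i\theta(u)}\uu\|_{L^2}^2 \leq \|u - e^{i\theta(u)}\uu\|_{L^2}. \]
This is bounded by $C d_0(u,\uu)$, and also by $C d_1(u,\uu)$ through Poincaré. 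Multiplying by $|\uu|_{1,2}$, respectively $\|\uu\|_{L^2} = 1$, gives $|v|_{1,2} \leq C_2 d_1(u,\uu)$ and $\|v\|_{L^2} \leq C_2 d_0(u,\uu)$.

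For the lower bounds, we start from $d_0(u,\uu) \leq \|u - e^{i\theta(u)}\uu\|_{L^2}$ and decompose
\[ u - e^{i\theta(u)}\uu = v - (1 - r(u))e^{i\theta(u)}\uu. \]
The key observation is that $1 - r(u) = 1 - \sqrt{1-\|v\|^2_{L^2}} \leq \|v\|_{L^2}^2 \leq \|v\|_{L^2}$, valid since $\|v\|_{L^2} \leq 1$. This yields $d_0(u,\uu) \leq 2\|v\|_{L^2}$. In the $H^1$ case we obtain similarly
\[ d_1(u,\uu) \leq |v|_{1,2} + \|v\|_{L^2}^2|\uu|_{1,2} \leq |v|_{1,2} + C_{\mathcal D}\|v\|_{L^2}|\uu|_{1,2}, \]
and Poincaré $\|v\|_{L^2} \leq C_{\mathcal D}|v|_{1,2}$ finishes the argument.

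The only delicate point is ensuring $r(u) > 0$, which fixes the sign of the square root. This is settled by the smallness of $\rho$ combined with \eqref{eq:boundedness_minimizer_rotated}, whose constants are unchanged when $\rho$ is reduced. All resulting constants depend only on $|\uu|_{1,2}$ and $C_{\mathcal D}$.
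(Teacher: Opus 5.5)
Your proof is correct and follows essentially the same route as the paper: you identify $v$ via \eqref{eq:proj_u_star_per} and \eqref{eq:theta_u_star}, use Pythagoras and the smallness of $\rho$ to fix the sign of $r(u)$, and obtain both bounds by the triangle inequality with $1-r(u)$ controlled. The differences are only minor refinements. You use the exact identity $1-r(u)=\frac12\|u-e^{i\theta(u)}\uu\|_{L^2}^2$ where the paper uses Cauchy--Schwarz. In the lower bound you use $1-r(u)\le\|v\|_{L^2}^2\le\|v\|_{L^2}$, justified by $\|v\|_{L^2}\le 1$, where the paper absorbs the term using $|v|_{1,2}\lesssim\rho$. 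One small slip: in the $H^1$ lower bound the constant $C_{\mathcal D}$ in $\|v\|_{L^2}^2\le C_{\mathcal D}\|v\|_{L^2}$ is unnecessary. That step holds with constant $1$ because $\|v\|_{L^2}\le 1$, and $C_{\mathcal D}$ enters only through the subsequent Poincar\'e inequality.
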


\begin{proof}
    From the definition of $\theta$ and $r$ alongside \eqref{eq:proj_u_star_per} and \eqref{eq:theta_u_star}, we have 
    \begin{align*}
        v = u - \langle u, e^{i\theta(u)} \uu \rangle e^{i\theta} \uu= u - \langle u, e^{i\theta(u)} \uu \rangle e^{i\theta} \uu - \langle u, ie^{i\theta(u)} \uu \rangle ie^{i\theta} \uu = P_{\uu}^\C u.
    \end{align*}
    Using that $u$ and $\uu$ are normalized in $L^2(\mathcal{D})$, as well as $\langle v, e^{i \theta(u)}\uu \rangle = 0$, we infer 
    $$
    1 = r(u)^2 + \|v\|_{L^2}^2.
    $$
    Moreover, we have $r(u) - 1 = \langle u - e^{i\theta(u)}\uu, e^{i\theta(u)}\uu\rangle$, so that $|r(u) - 1| \leq \|u - e^{i\theta(u)}\uu\|_{L^2}$ by Cauchy-Schwarz inequality. From Lemma~\ref{lemma:existence_theta}, we infer $\|u - e^{i\theta(u)}\uu\|_{L^2} \leq C\rho$, so that we may choose $\rho$ small enough to ensure $r(u) > 0$ for all $u \in \mathcal{T}_\rho(\uu)$. We deduce that
    $$
    r(u) = \sqrt{1 - \|v\|_{L^2}^2}.
    $$
    For the final inequalities, we first write 
    \begin{align*}
      |v|_{1, 2} &= |u - r(u) e^{i\theta(u)}\uu|_{1, 2}
      \\
      &\leq |u - e^{i\theta(u)}\uu|_{1, 2} + |r(u)-1||\uu|_{1, 2}
      \\
      &\leq C_2 d_1(u, \uu),
    \end{align*}
    where we have used that $|r(u)-1| \leq C|u - e^{i\theta(u)}\uu|_{1, 2}$ from above as well as Lemma~\ref{lemma:existence_theta}. For the other inequality, we have 
    \begin{align*}
      |v|_{1, 2} \geq |u - e^{i\theta(u)}\uu|_{1, 2} - |r(u)-1| |\uu|_{1, 2}.
    \end{align*}
    Then, using $r(u) = \sqrt{1-\|v\|_{L^2}^2}$, Poincaré's inéquality and $|v|_{1, 2} \leq C_1 d_1(u, \uu)$, we infer $|r(u)-1| \leq C_\rho |v|_{1, 2}^2 \leq C_\rho C_1 \rho |v|_{1,2}$ for all $u \in \mathcal{T}_\rho(\uu)$. Therefore, we obtain 
    $$
    |v|_{1, 2} \geq C_2 |u - e^{i\theta(u)}\uu|_{1, 2} \geq C_2 d_1(u, \uu).
    $$
    The inequalities for the $L^2$-norm are obtained in the same way.
  \end{proof}

The following asymptotic developments will be useful in the sequel. 

\begin{lemma}\label{lemma:big_o_hessian_and_mu}
    For $u,v \in X(\mathcal{M})$, it holds
    \begin{align}
        E_h(u + v) &= E_h(u) + \langle E_h'(u), v \rangle + \frac{1}{2}\langle E_h''(u) v, v \rangle + \mathcal{O}(|v|_{1, 2}^3),\label{eq:dl_E}
        \\
        E_h'(u + v) &= E_h'(u) + E_h''(u) v + \mathcal{O}(|v|_{1, 2}^2),\label{eq:dl_E_prime}
        \\
        \mu_h(u + v) &= \mu_h(u) + \mathcal{O}(|v|_{1, 2}).\label{eq:dl_mu}
    \end{align} 
\end{lemma}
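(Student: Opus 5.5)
The plan is to expand $E_h(u+v)$ exactly. Since $E_h$ is the sum of a quadratic part and the quartic term $\frac14\|u\|_{L^4}^4$, the expansion is a finite polynomial in $v$, and only the cubic and quartic remainders need to be controlled by powers of $|v|_{1,2}$. Uniformity in $h$ is not needed here: in \eqref{eq:dl_E}–\eqref{eq:dl_mu} the mesh is fixed and $u$ is fixed. Still, I will track the constants, which should depend only on $\|u\|_{L^\infty}$ or on discrete Sobolev norms of $u$.

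For the quadratic part, self-adjointness of $H_h$ (Propositions \ref{pr:Lz_self_adjoint} and \ref{pr:boundedness_pot}) gives
\[
\tfrac12(H_h(u+v),u+v)=\tfrac12(H_hu,u)+\langle H_hu,v\rangle+\tfrac12\langle H_hv,v\rangle
\]
with no remainder. For the quartic term, I write $|u+v|^2=|u|^2+2\Re(\bar uv)+|v|^2$ and square it. Pointwise,
\[
\tfrac14|u+v|^4=\tfrac14|u|^4+|u|^2\Re(\bar uv)+\tfrac12|u|^2|v|^2+\Re(\bar uv)^2+\Re(\bar uv)|v|^2+\tfrac14|v|^4 .
\]
The linear and quadratic terms match $\langle |u|^2u,v\rangle$ and $\frac12\langle |u|^2v+2u\Re(\bar uv),v\rangle$, consistent with \eqref{eq:energy_gradient} and \eqref{eq:energy_hessian}. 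The remainder $\int\Re(\bar uv)|v|^2+\frac14|v|^4$ is bounded by $\|u\|_{L^2}\|v\|_{L^4}^2\|v\|_{L^\infty}+\|v\|_{L^4}^4$. I will then bound $\|v\|_{L^p}\le C|v|_{1,2}$ using the discrete Sobolev inequality $\|v\|_{L^6}\le C_\Xi|v|_{1,2}$ from \cite{BessemoulinChainaisFilbet2015} together with the discrete Poincaré inequality. The factor $\|v\|_{L^\infty}$ is the delicate point. To avoid it, I will instead bound the cubic term by $\||u|\,|v|^3\|_{L^1}\le\|u\|_{L^4}\|v\|_{L^4}^3$ via Hölder with exponents $(4,4/3)$. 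Restricting to $|v|_{1,2}<\delta$ then yields \eqref{eq:dl_E} with a constant depending on $\|u\|_{L^4}$.

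For \eqref{eq:dl_E_prime}, the linear part is again exact. Expanding the nonlinearity gives
\[
|u+v|^2(u+v)-|u|^2u-|u|^2v-2u\Re(\bar uv)=2v\Re(\bar uv)+|v|^2u+|v|^2v,
\]
and its $L^2$ norm is at most $3\|u\|_{L^6}\|v\|_{L^6}^2+\|v\|_{L^6}^3\le C(|v|_{1,2}^2+|v|_{1,2}^3)$ by Hölder and the same discrete Sobolev embedding. Here the remainder is measured in $L^2$, as required by the definition of $\mathcal{O}$.

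For \eqref{eq:dl_mu}, I use \eqref{eq:alternative_formula_mu} together with continuity:
\[
\mu_h(u+v)-\mu_h(u)=\frac{\langle E_h'(u+v),u+v\rangle}{\|u+v\|^2}-\frac{\langle E_h'(u),u\rangle}{\|u\|^2}.
\]
By \eqref{eq:dl_E_prime} and the bound $\|E_h''(u)v\|_{L^2}\le C|v|_{1,2}$, the numerator changes by $\mathcal{O}(|v|_{1,2})$. Proving that last bound requires $\|A_hv\|_{L^2}$, which is not controlled by $|v|_{1,2}$ uniformly in $h$. I will avoid it by using self-adjointness, writing $\langle H_h(u+v),u+v\rangle-\langle H_hu,u\rangle=2\langle H_hu,v\rangle+\langle H_hv,v\rangle$, which is $\mathcal{O}(|v|_{1,2})$ using $\|H_hu\|_{L^2}$ (a fixed quantity) and Proposition \ref{pr:equivalence_h1}. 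The denominator satisfies $\|u+v\|^2=\|u\|^2+\mathcal{O}(|v|_{1,2})$ by Poincaré, and stays bounded below for small $\delta$ when $u\neq0$. Combining these gives the claim.

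The main obstacle is keeping the remainders controlled by $|v|_{1,2}$ alone, without inverse inequalities. I will handle this by systematically using self-adjointness for the linear part and the discrete $L^6$ Sobolev bound for the nonlinear part.
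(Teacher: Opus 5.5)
Your proof is correct. For \eqref{eq:dl_E} and \eqref{eq:dl_E_prime} it coincides with the paper's argument: exact expansion of the quadratic part by self-adjointness of $H_h$, algebraic expansion of the nonlinearity, and discrete Sobolev bounds on the remainders. Your choice $\int|u||v|^3\le\|u\|_{L^4}\|v\|_{L^4}^3$ neatly avoids any $L^\infty$ norm of $u$.

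The real difference is in \eqref{eq:dl_mu}. The paper expands the numerator by writing $E_h'(u+v)=E_h'(u)+\mathcal{O}(|v|_{1,2})$ from \eqref{eq:dl_E_prime}. This tacitly requires $\|E_h''(u)v\|_{L^2}\le C|v|_{1,2}$. Because of $A_h$, that bound holds on a fixed mesh only with an $h$-dependent, inverse-inequality constant. This conflicts with the $h$-uniformity asserted in Remark~\ref{rem:constants_big_O}. Your direct expansion $\langle H_h(u+v),u+v\rangle-\langle H_hu,u\rangle=2\langle H_hu,v\rangle+\langle H_hv,v\rangle$ sidesteps the problem, so your route is slightly better.

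To make it genuinely uniform in $h$, bound $|\langle H_hu,v\rangle|\le C_\Xi|u|_{1,2}|v|_{1,2}$ instead of using the mesh-dependent quantity $\|H_hu\|_{L^2}$. This follows from Cauchy--Schwarz on the TPFA form, Propositions~\ref{pr:Lz_self_adjoint} and \ref{pr:boundedness_pot}, and the discrete Poincar\'e inequality. Alternatively, at $u=e^{i\alpha}\uu$, use $H_h\uu=\mu_h(\uu)\uu-|\uu|^2\uu$.

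You leave implicit the variation of $\|u+v\|_{L^4}^4$ in the numerator; it is handled by the same discrete Sobolev bounds.
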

\begin{proof}
    We begin with \eqref{eq:dl_E}. Since $H_h$ is self-adjoint, we write
    $$
    \frac{1}{2}\langle H_h (u+v), u + v \rangle = \frac{1}{2} \langle H_h u, u\rangle + \langle H_h u, v \rangle + \frac{1}{2}\langle H_h v, v \rangle.
    $$
    For the nonlinear part of the energy, we develop 
    \[ |u+v|^4 =|u|^4 + 4 \Re\left(|u|^2u\overline{v}\right) + 2\Re\left(|u|^2 v \overline{v}\right) + 4\Re\left(u \Re\left(\overline{u}v\right)\overline{v}\right) + R_{u, v} \]
    where $|R_{u, v}| \leq C(|u|+|v|)|v|^3$. Integrating over $\mathcal D$, we therefore obtain 
    $$
    \frac{1}{4}\|u+v\|_{L^4}^4 = \frac{1}{4}\|u\|_{L^4}^4 + \left\langle |u|^2 u, v\right\rangle + \frac{1}{2}\left\langle |u|^2 v + 2\Re\left(u \Re\left(\overline{u}v\right)\right), v \right\rangle + \mathcal{O}(|v|_{1, 2}^3),
    $$
    where we have used the discrete Sobolev inequalities $\|v\|_{L^p} \leq C |v|_{1, 2}$ for $p=3$ and $4$.

    We turn to \eqref{eq:dl_E_prime}. We recall that $E'(u+v) = H_h (u+v) + |u+v|^2 (u+v)$, hence the linear part is treated as before and we only focus on the nonlinear part, writing that
    \[ |u+v|^2(u+v) =|u|^2 u + \left(|u|^2 v + 2 u \Re(\overline{u}v)\right)  + R_{u, v}, \]
    where $|R_{u, v}| \leq C(|u| + |v|)|v|^2$. Using the discrete Sobolev inequalities $\|v\|_{L^p} \leq C |v|_{1, 2}$ for $p=4$ and $6$, we obtain $\|R_{u, v}\|_{L^2} = \mathcal{O}(|v|_{1, 2}^2)$, which gives \eqref{eq:dl_E_prime}.

    Finally, we prove \eqref{eq:dl_mu}. We have by definition of $\mu_h$ that 
    $$
    \mu_h(u+v) = \frac{\langle E_h'(u+v), u+v\rangle}{\|u+v\|_{L^2}^2}.
    $$
    For the numerator, we have $E_h'(u+v) = E_h'(u) + \mathcal{O}(|v|_{1, 2})$ from \eqref{eq:dl_E_prime}, so that $\langle E_h'(u+v), u + v\rangle = \langle E_h'(u), u\rangle + \mathcal{O}(|v|_{1, 2})$. For the denominator, we have $\|u+v\|_{L^2}^2 = \|u\|_{L^2}^2 + \mathcal{O}(|v|_{1, 2})$ from the Poincaré inequality $\|v\|_{L^2} \leq C|v|_{1,2}$.
\end{proof}

\begin{remark}\label{rem:constants_big_O}
  The proof above is given for expansions around a general vector $u \in X(\mathcal{M})$. However, we will only use these expansions around vectors of the form $e^{i\alpha}\uu$. As such, the constants involved in the remainders depend only on the critical point norm $|u^*|_{1, 2}$, the domain $\mathcal{D}$ and the mesh regularity $\Xi$.
\end{remark}

We note that, in \eqref{eq:dl_E_prime}, compared to the analysis performed in \cite{FaouJezequel2018}, only the $L^2$-norm of the remainder is controlled rather than the full discrete $H^1$-norm. This originates from the fact that, for $d=2$ or $3$, the space $H^1(\mathcal{D})$ is no longer an algebra as it does not inject continuously into $L^\infty(\mathcal{D})$. As a consequence, we cannot proceed with the same bootstrap argument used in \cite{FaouJezequel2018}. We instead prove the two following results which allow us to obtain {\it a priori} bounds on the discrete $H^1$-norm of the error thanks to the energy decreasing property of the gradient flow \eqref{eq:gradient_flow}.

\begin{proposition}\label{pr:coercivity_energy}
  Up to choosing a smaller $\rho$, we have
  \begin{equation}\label{eq:coercivity_energy}
    E_h(u) - E_h(\uu) \geq \frac{K_1}{4} |v|_{1, 2}^2,
  \end{equation}
  for all $u \in \mathcal{T}_\rho(\uu)$, where $v$ is given by Lemma \ref{lemma:projection_forumla} and $K_1$ by \eqref{eq:equivalenceS}.
\end{proposition}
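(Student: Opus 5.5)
The approach is to Taylor expand the energy around the rotated critical point $e^{i\theta(u)}\uu$, using the decomposition $u = r(u)e^{i\theta(u)}\uu + v$ from Lemma~\ref{lemma:projection_forumla}. Since $E_h$ is invariant under phase shifts, $E_h(\uu) = E_h(e^{i\theta}\uu)$, where we write $\theta = \theta(u)$ and $w := e^{i\theta}\uu$. We then write
\[
u - w = v + (r(u)-1)w =: v + s\, w, \qquad s = \sqrt{1-\|v\|_{L^2}^2}-1 = \mathcal{O}(\|v\|_{L^2}^2).
\]
By Lemma~\ref{lemma:existence_theta} and Lemma~\ref{lemma:projection_forumla}, $|u-w|_{1,2} \leq C d_1(u,\uu) \leq C\rho$, so the expansion \eqref{eq:dl_E} of Lemma~\ref{lemma:big_o_hessian_and_mu} applies for $\rho$ small. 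Remark~\ref{rem:constants_big_O} guarantees that its constants do not depend on $\theta$. This gives
\[
E_h(u) - E_h(w) = \langle E_h'(w), v + s w\rangle + \tfrac12 \langle E_h''(w)(v+sw), v+sw\rangle + \mathcal{O}(|v + s w|_{1,2}^3).
\]

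Next we use criticality: $E_h'(w) = e^{i\theta}E_h'(\uu) = \mu_h w$. Since $\langle v, w\rangle = 0$, the first-order term reduces to $\mu_h s \|w\|^2 = \mu_h s$. The key identity is the mass constraint $\|v+sw\|^2 + 2\langle v+sw, w\rangle = 0$, which comes from $\|u\|_{L^2} = \|w\|_{L^2} = 1$. It gives $\mu_h s = -\frac{\mu_h}{2}\|v+sw\|_{L^2}^2$, so the first-order term combines with the Hessian term into
\[
\tfrac12\langle (E_h''(w)-\mu_h\Id)(v+sw), v+sw\rangle.
\]
Expanding in $v$ and $sw$, the cross terms and the $s^2$ terms are bounded by $C|s|(|v|_{1,2} + |s|)$. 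These are $\mathcal{O}(|v|_{1,2}^3)$ because $|s|\leq C\|v\|_{L^2}^2 \leq C|v|_{1,2}^2$ by the Poincaré inequality, with $E_h''(w)$ bounded from discrete $H^1$ to its dual via Proposition~\ref{pr:equivalence_h1} and discrete Sobolev inequalities. Similarly, $|v+sw|_{1,2} \leq C|v|_{1,2}$, so the cubic remainder is $\mathcal{O}(|v|_{1,2}^3)$.

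The remaining step, which is the delicate point, is the principal term $\tfrac12\langle (E_h''(w)-\mu_h)v,v\rangle$. Here we need $v \in T^\C_{w}\mathbb{S}_h$. Indeed $\langle v,w\rangle=0$ by construction and $\langle v, iw\rangle = \langle u, iw\rangle = 0$ by \eqref{eq:theta_u_star}. By Lemma~\ref{lemma:invariance_theta_subspace} this space equals $T^\C_{\uu}\mathbb{S}_h$, and $E_h''(w)$ is unitarily conjugate to $E_h''(\uu)$ through $e^{i\theta}$. Note that $e^{-i\theta}v$ still lies in $T^\C_{\uu}\mathbb{S}_h$ and $|e^{-i\theta}v|_{1,2}=|v|_{1,2}$. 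Hence \eqref{eq:equivalenceS} gives a lower bound $\frac{K_1}{2}|v|_{1,2}^2$. Collecting everything,
\[
E_h(u)-E_h(\uu) \geq \tfrac{K_1}{2}|v|_{1,2}^2 - C|v|_{1,2}^3.
\]
Finally, $|v|_{1,2}\leq C_2 d_1(u,\uu) < C_2\rho$ by Lemma~\ref{lemma:projection_forumla}. Shrinking $\rho$ so that $C C_2\rho \leq K_1/4$ yields \eqref{eq:coercivity_energy}. The main care is to ensure that every $\mathcal{O}$ constant is uniform in the phase $\theta$, which Remark~\ref{rem:constants_big_O} provides.
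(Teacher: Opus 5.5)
Your proposal is correct and follows essentially the same route as the paper: Taylor expansion of $E_h$ at $e^{i\theta(u)}u^*$, criticality plus orthogonality to remove the first-order term, the $(r(u)-1)$ correction turning into $-\frac{\mu}{2}\|v\|_{L^2}^2$, quasi-coercivity through phase conjugation, and absorbing the cubic remainder by shrinking $\rho$. The only difference is bookkeeping: you combine the first-order and Hessian terms exactly via the mass identity $\|v+sw\|_{L^2}^2+2\langle v+sw,w\rangle=0$ and bound the $s$-cross terms explicitly, whereas the paper absorbs these contributions into $\mathcal{O}(|v|_{1,2}^3)$ without comment.
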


\begin{proof}
  First, we recall that $u = r(u)e^{i\theta(u)}\uu + v$ and $r(u) = \sqrt{1-\|v\|_{L^2}^2}$ from Lemma~\ref{lemma:projection_forumla}. Then, using \eqref{eq:dl_E} from Lemma~\ref{lemma:big_o_hessian_and_mu}, we have 
  \begin{align*}
    E_h(u) &= E_h\left(e^{i\theta(u)}\uu + v + \left(\sqrt{1-\|v\|_{L^2}^2} -1\right) e^{i\theta(u)}\uu\right)
    \\
    &= E_h(e^{i\theta(u)}\uu) + \left\langle E_h'(e^{i\theta(u)}\uu), v \right\rangle - \frac{1}{2}\|v\|_{L^2}^2\left\langle E_h'(e^{i\theta(u)}\uu), e^{i\theta(u)}\uu\right\rangle 
    \\
    &+ \frac{1}{2}\left\langle E_h''(e^{i\theta(u)}\uu) v, v \right\rangle + \mathcal{O}(|v|_{1, 2}^3).
  \end{align*}
  Here, we note that the expansion is done around $e^{i\theta(u)}\uu$, so that the constants in the $\mathcal{O}(|v|_{1, 2}^3)$ remainder only depend on $|u^*|_{1, 2}$, as previously mentioned in Remark~\ref{rem:constants_big_O}. Then, using $E_h'(e^{i\theta(u)}\uu) = \mu e^{i\theta(u)} \uu$ and the fact that $v$ is orthogonal to $e^{i\theta(u)} \uu$, we obtain 
  $$
  E_h(u) = E_h(\uu) + \frac{1}{2}\left\langle \left(E_h''(e^{i\theta(u)}\uu) - \mu \Id\right) v, v \right\rangle + \mathcal{O}(|v|_{1, 2}^3).
  $$
  From \eqref{eq:equivalenceS}, we may now choose $\kappa > 0$ such that
  $$
  |\mathcal{O}(|v|_{1, 2}^3)| \leq \frac{1}{4}\left\langle \left(E_h''(e^{i\theta(u)}\uu) - \mu \Id\right) v, v \right\rangle,
  $$
  whenever $|v|_{1, 2} < \kappa$, implying 
  \begin{align*}
  E_h(u) - E_h(\uu) &\geq \frac{1}{4}\left\langle \left(E_h''(e^{i\theta(u)}\uu) - \mu \Id\right) v, v \right\rangle
  \\
  &\geq \frac{K_1}{4}|v|_{1, 2}^2,
  \end{align*}
  for all $|v|_{1, 2} < \kappa$. Finally, from Lemma~\ref{lemma:projection_forumla}, we may choose $\rho \leq \kappa/C_2$ so that $u \in \mathcal{T}_\rho(\uu)$ implies $|v|_{1, 2} < \kappa$.
\end{proof}

In the remainder of the section $t\mapsto u(t)$ is a solution to \eqref{eq:gradient_flow} with $u(0) \in \mathcal{T}_\rho(\uu)$ and $v(t) = P_{\uu}^{\C} u(t)$. We also define $T_* = \sup\{T \mid u(t) \in \mathcal{T}_\rho(\uu) \forall t \in [0, T]\}$, and introduce the notation 
$$
S_t := E_h''(e^{i\theta(u(t))}\uu) - \mu\Id,
$$
for all $t \in [0, T_*)$.
We note that, as was shown in the proof of Lemma~\ref{lemma:invariance_theta_subspace}, $S_t$ is always conjugate to $E_h''(\uu) - \mu\Id$.

\begin{proposition}\label{pr:bounded_A_norm}
  For all $\varepsilon > 0$, there exists $\delta > 0$ such that, if $u(0) \in \mathcal{T}_\delta(\uu)$, then $T_*=+\infty$ and for all $t \geq 0$,
  $$
  |v(t)|_{1,2} < \varepsilon.
  $$
\end{proposition}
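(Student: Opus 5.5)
The plan is to combine the energy-decreasing property of \eqref{eq:gradient_flow} with the coercivity estimate of Proposition~\ref{pr:coercivity_energy}, using a continuity (trapping) argument in the spirit of Lyapunov stability. Fix $\varepsilon>0$; without loss of generality $\varepsilon$ is small, say $\varepsilon \le C_1\rho/2$ with $C_1,\rho$ from Lemma~\ref{lemma:projection_forumla} and Proposition~\ref{pr:coercivity_energy}.

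\emph{Step 1: the energy is continuous at the critical orbit.} Since $E_h$ is invariant under phase shifts and, by \eqref{eq:dl_E} expanded around $e^{i\alpha}\uu$ (whose remainder constants are uniform in $\alpha$ by Remark~\ref{rem:constants_big_O}), there exists $C>0$ such that $E_h(u)-E_h(\uu)\le C\,d_1(u,\uu)$ for $d_1(u,\uu)$ small. Indeed, writing $u=e^{i\alpha}\uu+w$ with $|w|_{1,2}=d_1(u,\uu)$, the first-order term is $\mu\langle e^{i\alpha}\uu,w\rangle$, which is bounded by $C|w|_{1,2}$ by Poincaré. (A quadratic bound is also available, but a linear one suffices.) Choose $\delta\le\rho$ so that $C\delta < \frac{K_1}{4}\varepsilon^2$.

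\emph{Step 2: trapping.} Let $u(0)\in\mathcal{T}_\delta(\uu)$. The flow preserves the sphere and is a smooth ODE in finite dimension, so $u$ exists locally and $T_*>0$. For $t<T_*$, energy dissipation and Proposition~\ref{pr:coercivity_energy} give
\[
\frac{K_1}{4}|v(t)|_{1,2}^2\le E_h(u(t))-E_h(\uu)\le E_h(u(0))-E_h(\uu)\le C\delta<\frac{K_1}{4}\varepsilon^2,
\]
so $|v(t)|_{1,2}<\varepsilon$ on $[0,T_*)$. Lemma~\ref{lemma:projection_forumla} then yields $d_1(u(t),\uu)\le |v(t)|_{1,2}/C_1<\varepsilon/C_1\le\rho/2$ uniformly on $[0,T_*)$.

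\emph{Step 3: $T_*=\infty$.} Assume $T_*<\infty$. Since the solution stays bounded on the sphere, it extends beyond $T_*$. By continuity of $t\mapsto d_1(u(t),\uu)$ (the map $\min_\alpha|\cdot-e^{i\alpha}\uu|_{1,2}$ is $1$-Lipschitz), we get $d_1(u(T_*),\uu)\le\rho/2<\rho$, and the same holds on a neighbourhood of $T_*$. This contradicts the definition of $T_*$. Hence $T_*=+\infty$, and the bound from Step~2 holds for all $t\ge0$.

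The main subtlety is Step~1 together with the requirement that the neighbourhood where coercivity holds is phase-invariant. This is handled because Proposition~\ref{pr:coercivity_energy} is stated on the tubular neighbourhood $\mathcal{T}_\rho(\uu)$, and because the expansion constants do not depend on the phase. One also has to be careful to apply Lemma~\ref{lemma:projection_forumla} only while $u(t)\in\mathcal{T}_\rho(\uu)$, which the continuity argument in Step~3 guarantees.
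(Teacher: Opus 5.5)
Your proof is correct and takes essentially the paper's route: energy decay of the flow, the coercivity estimate of Proposition~\ref{pr:coercivity_energy}, and a continuity argument showing $T_*=+\infty$. The only difference is the order of the steps. You bound $|v(t)|_{1,2}$ first and deduce $d_1(u(t),u^*)\le\rho/2$ on all of $[0,T_*)$. The paper instead argues by contradiction at an intermediate time where $d_1(u(t),u^*)\in(\rho/2,\rho)$. Your version also has two small advantages: it uses $\varepsilon^2$ correctly in the choice of $\delta$, and it justifies explicitly, through the phase-uniform expansion, the estimate $E_h(u(0))-E_h(u^*)\le C\,d_1(u(0),u^*)$, which the paper only asserts from the $\mathcal{C}^1$ regularity of $E_h$.
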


\begin{proof}
 We briefly sketch the proof. We use Proposition~\ref{pr:coercivity_energy}, which allows us to use the fact that solutions to \eqref{eq:gradient_flow} have decreasing energy. We first use this property to show that, once starting close enough to $u^*$, the solution remains in $\mathcal{T}_\rho(\uu)$ for all time, where $\rho$ is given by Proposition \ref{pr:coercivity_energy}. Then, we use Proposition~\ref{pr:coercivity_energy} and the energy decreasing property to bound $|v(t)|_{1, 2}$ by $E_h(u(0))-E_h(\uu)$, which may be chosen arbitrarily small.
  
  From the fact that $E_h$ is $\mathcal{C}^1$ with respect to the $|\cdot|_{1, 2}$ norm, we know that there exists $C > 0$ such that, for all $\delta\in(0,\rho]$, if $u(0) \in \mathcal{T}_\delta(\uu)$, then 
  $$
  E_h(u(0)) - E_h(\uu) < C\delta.
  $$
  Then, using that the gradient flow \eqref{eq:gradient_flow} is energy decreasing, we infer that 
  \begin{equation}\label{eq:energy_decreasing}
    E_h(u(t)) - E_h(\uu) < C\delta
  \end{equation}
  for all $t\geq 0$. We now prove that, up to choosing a small enough $\delta$, we have that $u(t) \in \mathcal{T}_\rho(\uu)$ for all $t\geq 0$. We reason by contradiction. Assuming $u(0) \in \mathcal{T}_{\rho/2}(\uu)$ and that $T_* < +\infty$, we know by continuity that there must exist $t \in (0, T_*)$ such that $d_{1,2}(u(t), \uu) \in (\rho/2, \rho)$. Then, at time $t$, Proposition \ref{pr:coercivity_energy} holds, so that from Lemma~\ref{lemma:projection_forumla} we obtain 
\[    E_h(u(t)) - E_h(\uu) \geq \frac{K_1}{4} |v|_{1, 2}^2 \geq \frac{K C_1^2}{4} d_1(u(t), \uu)^2 \geq \frac{K_1 C_1^2}{16} \rho^2.\]
  However, we know that $E_h(u(t)) - E_h(\uu) < C\delta$, so that we may choose $\delta \leq \min(\rho/2, \frac{K_1 C_1^2}{16C}\rho^2)$ to reach a contradiction, implying $T_* = +\infty$ as claimed. Finally, using \eqref{eq:coercivity_energy} together with \eqref{eq:energy_decreasing}, we deduce that 
  \begin{align*}
    K|v(t)|_{1, 2}^2 \leq E_h(u(t)) - E_h(\uu) < C\delta,
  \end{align*}
  for all $t\geq 0$. We are then able to conclude choosing $\delta \leq \frac{K\varepsilon}{C}$.
\end{proof}

\begin{proposition}\label{pr:evolution_v}
  For all $t \in (0, T_*)$, it holds
\begin{equation} \label{eq:evolution_v}
    \partial_t v(t) = -P_{\uu}^\C S_t v(t) + \mathcal{O}(|v(t)|_{1, 2}^2),
\end{equation}
where the constants involved in the remainder only depend on $|\uu|_{1,2}$.
\end{proposition}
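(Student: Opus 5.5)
Since $v(t) = P_{\uu}^\C u(t)$ and, by \eqref{eq:proj_u_star_per}, $P_{\uu}^\C$ is a fixed linear map (it does not depend on $\theta(u(t))$), we have $\partial_t v = P_{\uu}^\C \partial_t u$. The whole proof therefore reduces to expanding the right-hand side of \eqref{eq:gradient_flow} around the rotated critical point $w(t) := e^{i\theta(u(t))}\uu$. Throughout, I use the decomposition $u = r(u) w + v$ from Lemma~\ref{lemma:projection_forumla}, with $r(u) = \sqrt{1-\|v\|_{L^2}^2} = 1 + \mathcal{O}(|v|_{1,2}^2)$ by Poincaré's inequality. I write $u = w + \tilde v$ with $\tilde v := v + (r(u)-1)w$. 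Then $\tilde v = v + \mathcal{O}(|v|_{1,2}^2)$, both in $L^2$ and in $|\cdot|_{1,2}$, since $|w|_{1,2} = |\uu|_{1,2}$.

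\textbf{Expanding the gradient.} Apply \eqref{eq:dl_E_prime} at $w$, which is legitimate with uniform constants by Remark~\ref{rem:constants_big_O}:
\[ E_h'(u) = E_h'(w) + E_h''(w)\tilde v + \mathcal{O}(|\tilde v|_{1,2}^2) = \mu w + E_h''(w) v + (r-1)E_h''(w)w + \mathcal{O}(|v|_{1,2}^2). \]
The term $(r-1)E_h''(w)w$ is $\mathcal{O}(|v|_{1,2}^2)$ in $L^2$, because $E_h''(w)w = H_h w + 3|w|^2 w$ is a fixed vector whose $L^2$ norm is bounded in terms of $\uu$ on the given mesh. Next comes the multiplier. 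A first-order expansion $\mu_h(u) = \mu + \mathcal{O}(|v|_{1,2})$ from \eqref{eq:dl_mu} is not enough, because it multiplies $u$, which is of order one. I therefore sharpen it. Since $\|u\|_{L^2} = 1$,
\[ \mu_h(u) = \langle E_h'(u), u\rangle = \langle \mu w + E_h''(w)v, w + v\rangle + \mathcal{O}(|v|_{1,2}^2) = \mu + \langle E_h''(w)v, w\rangle + \mathcal{O}(|v|_{1,2}^2), \]
using $\langle w, v\rangle = 0$ and $\|w\|_{L^2} = 1$.

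\textbf{Assembling and projecting.} Combining the two expansions,
\[ \partial_t u = -E_h''(w)v + \mu v + \langle E_h''(w)v, w\rangle w + \mathcal{O}(|v|_{1,2}^2), \]
where the $\mu w$ contributions cancel and $\mu_h(u)\, u = \mu w + \mu v + \langle E_h''(w)v,w\rangle w + \mathcal{O}(|v|_{1,2}^2)$. The rewriting $-E_h''(w)v + \mu v = -S_t v$ is immediate. Applying $P_{\uu}^\C$ kills the $w$ term, since $w \perp T_{\uu}^\C\mathbb{S}_h$ by Lemma~\ref{lemma:invariance_theta_subspace}. Moreover $\|P_{\uu}^\C\|_{L^2\to L^2} \le 1$, so the remainder stays $\mathcal{O}(|v|_{1,2}^2)$. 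This gives \eqref{eq:evolution_v}.

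\textbf{Main obstacle.} The delicate point is the bookkeeping of remainders in the mixed norms: every $\mathcal{O}$ must be an $L^2$ bound controlled by $|v|_{1,2}^2$, with constants depending only on $|\uu|_{1,2}$ and uniform along the circle of rotated minimizers. This is where Remark~\ref{rem:constants_big_O} and the bound $|\tilde v - v|_{1,2} \le C|v|_{1,2}^2$ are needed. The expansions are also only valid for $|v|_{1,2}$ small, so one may have to shrink $\rho$ so that the smallness requirements of Lemma~\ref{lemma:big_o_hessian_and_mu} hold on $\mathcal{T}_\rho(\uu)$.
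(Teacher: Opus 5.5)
Your proof is correct. It differs from the paper's only in how it handles the Lagrange multiplier term. The paper projects before expanding. Since $\mu_h(u)$ is a real scalar and $P_{\uu}^\C$ is real-linear, $P_{\uu}^\C(\mu_h(u)u)=\mu_h(u)v$, which gives
\[
\partial_t v=-P_{\uu}^\C E_h'(u)+\mu_h(u)v .
\]
The first-order expansion \eqref{eq:dl_mu} then suffices, because $\mathcal{O}(|v|_{1,2})\,v=\mathcal{O}(|v|_{1,2}^2)$ in $L^2$ by Poincaré. Finally $\mu v=P_{\uu}^\C(\mu v)$ because $v\in T_{\uu}^\C\mathbb{S}_h$.

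Your remark that \eqref{eq:dl_mu} ``is not enough'' therefore only holds if you expand $\mu_h(u)u$ before projecting. Your second-order expansion of $\mu_h$ produces the term $\langle E_h''(w)v,w\rangle w$, and the projection then immediately kills it. What your route gains is a first-order description of the full flow $\partial_t u$, including its normal component along $w$. The cost is extra bookkeeping. For example, $\langle E_h''(w)v,v\rangle=\mathcal{O}(|v|_{1,2}^2)$ must come from the upper bound in Proposition~\ref{pr:equivalence_h1} and discrete Sobolev inequalities. An $L^2$ bound on $E_h''(w)v$ will not do, since it is not uniform in $h$.

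One small point on uniformity. You bound $\|E_h''(w)w\|_{L^2}$ only ``on the given mesh'', which is weaker than the stated claim that the constants depend only on $|\uu|_{1,2}$. The critical-point equation fixes this: $E_h''(w)w=\mu w+2|w|^2w$. Its $L^2$ norm is controlled by $|\uu|_{1,2}$ through $\|w\|_{L^6}^3\le C|\uu|_{1,2}^3$ and $\mu\le C(|\uu|_{1,2}^2+|\uu|_{1,2}^4)$. The paper's proof absorbs the same term $(r(u)-1)E_h''(w)w$ into its remainder without comment, so this observation applies to both arguments.
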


\begin{proof}
    From Proposition \ref{lemma:projection_forumla}, we have $v(t) = P_{\uu}^\C u(t)$ so that $\partial_t v(t) = P_{\uu}^\C \partial_t u(t)$. Furthermore, from \eqref{eq:gradient_flow} we infer
    \begin{equation}\label{eq:projected_d_t_u}
        \partial_t v(t) = - P_{\uu}^\C E_h'(u(t)) + \mu_h(u(t))v(t).
    \end{equation}
    We then write 
    \begin{align*}
        E_h'(u) &= E_h'\left(e^{i\theta(u)}\uu + v + (r(u)-1)e^{i\theta(u)}\uu\right)
        \\
        &= E_h' (e^{i\theta(u)}\uu) + E_h''(e^{i\theta(u)}\uu) v + \mathcal{O}(|v|_{1, 2}^2)
        \\
        &= \mu e^{i\theta(u)} \uu + E_h''(e^{i\theta(u)}\uu) v + \mathcal{O}(|v|_{1, 2}^2),
    \end{align*}
    where we have used Lemma~\ref{lemma:projection_forumla} and Lemma~\ref{lemma:big_o_hessian_and_mu}. Once again, we note that the expansion of $E_h'$ is done at $e^{i\theta(u)}\uu$, so that the constants involved in the remainder depend only on $|\uu|_{1, 2}$, as mentioned in Remark~\ref{rem:constants_big_O}. Therefore, we get that
    $$
    P_{\uu}^\C E_h'(u) = P_{\uu}^\C E_h''(e^{i\theta(u)}\uu) v + \mathcal{O}(|v|_{1, 2}^2).
    $$
    Finally, from Lemma \ref{lemma:big_o_hessian_and_mu} we obtain
    \[        \mu_h(u) = \mu_h(e^{i\theta(u)}\uu) + \mathcal{O}(|v|_{1, 2})= \mu + \mathcal{O}(|v|_{1, 2}), \]
    so that $\mu_h(u) v = \mu v + \mathcal{O}(|v|_{1, 2}^2)$, where, once again, the constants in the remainder only depend on $|\uu|_{1,2}$. We are then able to conclude using \eqref{eq:projected_d_t_u}.
\end{proof}


\begin{proof}[Proof of Theorem \ref{th:exp_convergence}]
    From Proposition \ref{pr:evolution_v}, taking the inner product of \eqref{eq:evolution_v} with $v(t)$, we obtain 
    $$
    \frac{1}{2}\frac{\dd}{\dd t}\|v(t)\|_{L^2}^2 = -\langle S_t v(t), v(t)\rangle + \mathcal{O}(|v(t)|_{1, 2}^3).
    $$
    Furthermore, from the equivalence of $\langle S_t v, v\rangle$ and $|v|_{1, 2}^2$, we infer 
    $$
    \frac{1}{2}\frac{\dd}{\dd t}\|v(t)\|_{L^2}^2 \leq -\left(1 - C|v(t)|_{1,2}\right)\langle S_t v(t), v(t)\rangle.
    $$
    Then, using Proposition \ref{pr:bounded_A_norm}, we know that for all $\varepsilon > 0$, there exists $\delta > 0$ such that if $u(0) \in \mathcal{T}_\delta(\uu)$, we have $C|v(t)|_{1,2} \leq \varepsilon/\lambda$ for all $t \geq 0$, where $\lambda>0$ is given by \eqref{eq:lambda}. We deduce 
    $$
    \frac{1}{2}\frac{\dd}{\dd t}\|v(t)\|_{L^2}^2 \leq -\left(1-\frac{\varepsilon}{\lambda}\right)\langle S_t v(t), v(t)\rangle.
    $$
    Therefore, using $\langle S_t v(t), v(t)\rangle \geq \lambda \|v(t)\|_{L^2}^2$ along Grönwall's inequality, we obtain 
    $$
    \|v(t)\|_{L^2} \leq e^{-(\lambda-\varepsilon)t}\|v(0)\|_{L^2}.
    $$
    Finally, from the $L^2$ inequalities given in Lemma~\ref{lemma:projection_forumla}, we obtain 
    $$
    d_0(u(t), \uu) \leq \frac{C_2}{C_1} e^{-(\lambda-\varepsilon)t}d_0(u(0), \uu).
    $$
\end{proof}

\subsection{Convergence of the spectral gap}\label{sec:spectral_gap}

We now analyze how the exponential convergence rate $\lambda$ given by \eqref{eq:lambda} behaves as the mesh diameter $h$ tends to zero. We place ourselves in the context of Theorem \ref{th:cv_ground_states}: $(\mathcal{M}_n)_{n \in \N}$ denotes a sequence of uniformly regular admissible meshes with $h_n \rightarrow 0$, while $(\uu_n)_{n \in \N}$ denotes a sequence of normalized minimizers in $\mathcal{U}_{h_n}$. As given by Theorem \ref{th:cv_ground_states}, we may choose this sequence so that $(\uu_n)_{n \in \N}$ converges in $L^2(\mathcal{D})$ to a continuous ground state $\uphi$. Since $\uphi$ and $\uu_n$ are respectively constrained minimizers of $E$ and $E_{h_n}$, there exists $\mu$ and $\mu_n$ such that $E'(\uphi) = \mu \uphi$ and $E_{h_n}'(\uu_n) = \mu_n \uu_n$. Moreover, we also recall that, for such a sequence, there exists $C > 0$ such that $|u_n^*|_{1, 2} \leq C$ and that $\mu_n$ converges to $\mu$ (see the proof of Proposition~\ref{pr:cv_critical_points}). 

The Hessian of the continuous energy at $\phi^*$ is given by
$$
E''(\uphi)\varphi = H \varphi + |\uphi|^2 \varphi + 2 \uphi \Re(\uphi\overline{\varphi}),
$$
for $\varphi \in H^1_0(\mathcal{D})$, where $H = -\Delta + V -\Omega L$. Note that, from $\|H \varphi\|_{H^{-1}} \leq C\|\varphi\|_{H^1}$ and usual Sobolev injections, we find that $E''(\phi^*)$ is a continuous linear map from $H^{1}_0(\mathcal{D})$ to $H^{-1}(\mathcal{D})$. As a slight abuse of notations, throughout the section and depending on the context, the bracket $\langle \cdot, \cdot \rangle$ may represent either the usual $L^2$-product as defined in the introduction or the associated $H^{-1}$-$H^1_0$ duality pairing. For the remainder of the section, we define
$$
F = \enstq{\varphi \in H^1_0(\mathcal{D})}{\langle \varphi,\uphi\rangle=\langle \varphi, i\uphi \rangle =0}.
$$ 
Similarly, we define $F_n = \enstq{v \in X(\mathcal{M}_n)}{\langle v, u_n^*\rangle = \langle v, iu_n^*\rangle = 0}$, that is, $F_n = T_{u_n^*}^\C \mathbb{S}_{h_n}$ when using the notations from the previous section. 
\begin{proposition}\label{th:spectral_gap_convergence}
  Let $(\lambda_n)_{n\in \N}$ and $\lambda$ be defined by 
  \begin{align*}
    \lambda_n = \inf_{v \in F_n \backslash \{0\}} \frac{\left\langle (E''_{h_n}(\uu_n) - \mu_n \Id)v, v\right\rangle}{\|v\|_{L^2}^2} \quad \text{and} \quad 
    \lambda = \inf_{\varphi \in F \backslash \{0\}} \frac{\left\langle (E''(\uphi) - \mu \Id)\varphi, \varphi \right\rangle}{\|\varphi\|_{L^2}^2}.
  \end{align*}
 Then $\lim_{n \rightarrow +\infty} \lambda_n = \lambda$.
\end{proposition}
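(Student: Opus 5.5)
The proof splits into the two inequalities $\limsup_n \lambda_n \leq \lambda$ (upper bound, by consistency) and $\liminf_n \lambda_n \geq \lambda$ (lower bound, by compactness). Throughout we write $a_n(v) = \langle (E''_{h_n}(\uu_n)-\mu_n\Id)v,v\rangle$ and $a(\varphi)=\langle (E''(\uphi)-\mu\Id)\varphi,\varphi\rangle$. By \eqref{eq:energy_hessian},
\[ a_n(v) = (H_{h_n}v,v) + \int_{\mathcal D}\left(|\uu_n|^2|v|^2 + 2\Re(\overline{\uu_n}v)^2\right) - \mu_n\|v\|_{L^2}^2 , \]
and $a(\varphi)$ has the same form. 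We will use the following facts from the proof of Proposition~\ref{pr:cv_critical_points}: $|\uu_n|_{1,2}\le C$, $\uu_n\to\uphi$ in $L^2$ and in $L^4$, and $\mu_n\to\mu$. A preliminary remark is that both infima are finite. Proposition~\ref{pr:equivalence_h1} and the positivity of the quartic terms give $a_n(v)\ge -\mu_n\|v\|^2_{L^2}$, so $\lambda_n\ge -\sup_n \mu_n > -\infty$.

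\emph{Upper bound.} Fix $\varepsilon>0$. By density we pick $\varphi\in F\cap H^2(\mathcal D)$ with $\|\varphi\|_{L^2}=1$ and $a(\varphi)\le\lambda+\varepsilon$. Density holds because $F$ is a codimension-$2$ subspace of $H^1_0$: we approximate in $H^1$ by smooth functions and correct the two constraints with fixed smooth functions $\chi_1,\chi_2$ satisfying $\langle\chi_j,\uphi\rangle,\langle\chi_j,i\uphi\rangle$ forming an invertible matrix. Next we set $w_n=P_{h_n}\varphi$ and project it onto $F_n$ with the formula \eqref{eq:proj_u_star_per}:
\[ v_n = w_n - \langle w_n,\uu_n\rangle\uu_n - \langle w_n,i\uu_n\rangle i\uu_n . \]
The coefficients tend to $\langle\varphi,\uphi\rangle=\langle\varphi,i\uphi\rangle=0$, and $|\uu_n|_{1,2}$ is bounded, so $|v_n-w_n|_{1,2}\to0$. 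The quadratic form $(H_{h_n}\cdot,\cdot)$ is bounded in $|\cdot|_{1,2}$ by Proposition~\ref{pr:equivalence_h1}, so $a_n(v_n)-a_n(w_n)\to0$. For $a_n(w_n)\to a(\varphi)$ the argument is as follows. The linear part converges as in the proof of Proposition~\ref{pr:convergence_energy}, via Propositions~\ref{pr:convergence_laplacian}, \ref{pr:convergence_Lz} and \ref{pr:convergence_V}. The quartic terms converge because $w_n\to\varphi$ in $L^\infty$-bounded $L^4$ and $\uu_n\to\uphi$ in $L^4$. We also have $\|v_n\|_{L^2}\to1$. Hence $\limsup\lambda_n\le\lambda+\varepsilon$.

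\emph{Lower bound (main obstacle).} We take a subsequence realizing $\liminf\lambda_n$ and minimizers $v_n\in F_n$ with $\|v_n\|_{L^2}=1$ and $a_n(v_n)=\lambda_n$; these exist since the dimension is finite. From $a_n(v_n)\le C$ (by the upper bound) we get
\[ C_1|v_n|^2_{1,2}\le (H_{h_n}v_n,v_n) \le \lambda_n+\mu_n\le C , \]
so $|v_n|_{1,2}$ is bounded. The compactness result \cite[Theorem 14.2]{EymardGallouetHerbin2000} then gives $v_n\to\psi$ in $L^2$ with $\psi\in H^1_0$, and the discrete Sobolev embedding plus interpolation gives convergence in $L^4$. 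Passing to the limit, $\|\psi\|_{L^2}=1$ and $\psi\in F$, since $\langle v_n,\uu_n\rangle\to\langle\psi,\uphi\rangle$ and similarly for $i\uu_n$. The quartic terms converge. The key step is the discrete weak lower semicontinuity
\[ \liminf_n (H_{h_n}v_n,v_n)\ge (H\psi,\psi) , \]
which we plan to get from convexity. By Proposition~\ref{pr:equivalence_h1}, $(H_{h_n}\cdot,\cdot)$ is a nonnegative form, so for any $\varphi\in H^1_0\cap H^2$,
\[ (H_{h_n}v_n,v_n)\ge 2\Re(H_{h_n}v_n,P_{h_n}\varphi)-(H_{h_n}P_{h_n}\varphi,P_{h_n}\varphi) . \]
Proposition~\ref{pr:compactness_consistency} handles the first term and Proposition~\ref{pr:convergence_energy} the second, so the right-hand side tends to $2\Re(H\psi,\varphi)-(H\varphi,\varphi)$. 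Letting $\varphi\to\psi$ in $H^1$ and using continuity of $H:H^1_0\to H^{-1}$ yields $(H\psi,\psi)$. Therefore $\liminf\lambda_n\ge a(\psi)\ge\lambda$.

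Combining the two bounds gives $\lim\lambda_n=\lambda$. The delicate point is this lower semicontinuity step. It requires that the consistency estimate in Proposition~\ref{pr:compactness_consistency} holds for general bounded sequences, which it does, since its proof uses only $|u_n|_{1,2}\le C$.
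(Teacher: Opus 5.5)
Your proof is correct, but it takes a different route from the paper.

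\textbf{The paper's route.} The paper never works with the Rayleigh quotients directly. It introduces the solution operators $T$ and $T_n$ of the constrained problems $a(Tf,\chi)=\langle f,\chi\rangle$ for $\chi\in F$ and $a_n(T_nf,v)=\langle f,v\rangle$ for $v\in F_n$. Here $a$ and $a_n$ are the Hessian forms \emph{without} the shift $-\mu$, which are coercive by Lemma~\ref{lemma:continuity_coercivity_bilinear}. It then identifies $\lambda=1/\alpha^1-\mu$ and $\lambda_n=1/\alpha_n^1-\mu_n$ through the largest eigenvalues of these compact self-adjoint operators (Lemma~\ref{lemma:expressions_lambda}). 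Next it proves $\|T_n-T\|_{\mathcal{L}(L^2(\mathcal{D}))}\to 0$ (Lemma~\ref{lemma:convergence_resolvants}). It concludes with the Babu\v{s}ka--Osborn / Koprucki--Eymard spectral approximation theorem.

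\textbf{Your route.} You prove a $\Gamma$-convergence-type statement for the Rayleigh quotients themselves.
\begin{itemize}
\item For the $\limsup$ bound, you use a recovery sequence: your projected interpolant $v_n$, which is $\Pi_n P_{h_n}\varphi$ in the notation of the proof of Lemma~\ref{lemma:convergence_resolvants}, built from a smooth near-minimizer $\varphi\in F$.
\item For the $\liminf$ bound, you combine discrete compactness with the convexity inequality $(H_{h_n}v_n,v_n)\ge 2\Re(H_{h_n}v_n,P_{h_n}\varphi)-(H_{h_n}P_{h_n}\varphi,P_{h_n}\varphi)$, and then let $\varphi\to\psi$ in $H^1_0(\mathcal{D})$.
\end{itemize}

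\textbf{Comparison.} The analytic ingredients are the same as the paper's:
\begin{itemize}
\item discrete compactness;
\item the consistency of Proposition~\ref{pr:compactness_consistency}, which, as you note, only uses $|v_n|_{1,2}\le C$ and $L^2$ convergence;
\item strong $L^4$ convergence of $u_n^*$;
\item the correction of $P_{h_n}\chi$ onto $F_n$.
\end{itemize}
Your route is more elementary and self-contained. It needs no Lax--Milgram on the constrained spaces, no compact solution operators, no operator-norm convergence and no abstract spectral theorem. It also avoids Lemma~\ref{lemma:expressions_lambda} and the continuous coercivity of $a$; only the nonnegativity of $(H_{h_n}\cdot,\cdot)$ from Proposition~\ref{pr:equivalence_h1} is used.

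Two routine details should be stated when you write it up. First, the bilinear bound $|(H_{h_n}u,w)|\le C|u|_{1,2}|w|_{1,2}$ follows from the quadratic bound by Cauchy--Schwarz for the nonnegative Hermitian form. Second, the quartic terms in $a_n(v_n)-a_n(w_n)$ vanish in the limit by the discrete Sobolev inequality.

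What the paper's heavier machinery buys is more information. Norm convergence $T_n\to T$ gives convergence of the whole constrained spectrum with multiplicities (every $\beta_n^k\to\beta^k$, together with the spectral projections). Your argument only controls the bottom of the spectrum; extending it to higher eigenvalues would need a min--max version of the same $\liminf$/$\limsup$ argument.
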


As a direct consequence of Proposition \ref{th:spectral_gap_convergence}, if $\uphi$ is a quasi-isolated ground state (which corresponds to a definition analogous to Assumption \ref{ass:quasi_coercivity} in the continuous case, see also \cite{HenningYadavDiscrete2025}), then $\lambda_n$ is bounded from below by a positive constant for $n$ large enough, ensuring uniformity in the exponential convergence rates obtained in Theorem~\ref{th:exp_convergence}. We also note that, as was noted throughout the proofs of Section~\ref{sec:convergence_gradient_flow}, the constants $C$ (\emph{resp.} $\delta$) in the statement of Theorem~\ref{th:exp_convergence} may be bounded by $|u_n^*|_{1, 2}$ (\emph{resp.} $|u_n^*|_{1, 2}$ and $\varepsilon$). Therefore, since $|u_n^*|_{1,2}$ is bounded from above, all constants appearing in Theorem~\ref{th:exp_convergence} may be bounded uniformly in $n$ for the sequence of discrete minimizers $(u_n^*)_{n \in \N}$. On the other side, as shown in the recent work \cite{FengHenningTang2026}, if the potential $V$ and the domain are radially symmetric, it holds $\lambda = 0$ if $\uphi$ is non-radial. Consequently, one could expect local convergence rates to become degenerate as the mesh diameter $h$ goes to zero. Such behavior is exhibited in the numerical experiments of Section \ref{sec:numerics}.

In order to prove Proposition~\ref{th:spectral_gap_convergence}, we define the bilinear forms
\begin{equation}\label{eq:eigenproblem_discrete}
  a_n(v, w) = \langle E_{h_n}''(\uu_n) v, w\rangle
\end{equation}
for $v, w \in X(\mathcal{M}_n)$, and 
\begin{equation}\label{eq:eigenproblem_continuous}
  a(\varphi, \chi) = \langle E''(\uphi) \varphi, \chi\rangle
\end{equation}
for $\varphi, \chi \in H^1_0(\mathcal{D})$. We prove the following lemma, giving the continuity and coercivity of these bilinear forms.
\begin{lemma}\label{lemma:continuity_coercivity_bilinear}
  There exists $C > 0$ such that, for all $\varphi \in H^1_0(\mathcal{D})$, we have 
  $$
  a(\varphi, \varphi) \geq C \|\varphi\|_{H^1}^2.
  $$
  Moreover, there exists $C_1, C_2 > 0$ such that, for all $n \in \N$ and $v \in X(\mathcal{M}_n)$, we have 
  $$
  C_1 |v|_{1, 2}^2 \geq a_n(v, v) \geq C_2 |v|_{1, 2}^2.
  $$
\end{lemma}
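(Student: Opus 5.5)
The plan is to split the Hessian into the linear part and the nonlinear part, and to observe that the nonlinear contribution is nonnegative. For $\varphi \in H^1_0(\mathcal{D})$ we have
\[
a(\varphi,\varphi) = \langle H\varphi,\varphi\rangle + \int_{\mathcal{D}} |\uphi|^2|\varphi|^2 + 2\int_{\mathcal{D}} \left(\Re(\uphi\overline{\varphi})\right)^2 \geq \langle H\varphi,\varphi\rangle ,
\]
and the same identity holds for $a_n(v,v)$, with $H_{h_n}$ and $\uu_n$ in place of $H$ and $\uphi$. Hence both lower bounds follow from coercivity of the linear parts. In the discrete setting, Proposition~\ref{pr:equivalence_h1} gives directly $a_n(v,v) \geq \frac{\kappa}{1+\kappa}|v|_{1,2}^2$, with a constant independent of $n$.

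For the continuous lower bound, I would reproduce the proof of Proposition~\ref{pr:equivalence_h1} in the continuous setting (the argument of \cite{DodingHenning2024}). Write
\[
\langle H\varphi,\varphi\rangle = \int_{\mathcal{D}} \left|\nabla\varphi + i\tfrac{\Omega}{2}R\varphi\right|^2 + \int_{\mathcal{D}} \left(V - \tfrac{\Omega^2}{4}|R|^2\right)|\varphi|^2 ,
\]
which is integration by parts using $\mathrm{div} R = 0$ and Dirichlet conditions. Then apply Young's inequality with $\delta = 1+\kappa$ and use \eqref{eq:assumption_V} together with $|R(x)| \leq |x|$ and $d \geq 1$. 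This gives $\langle H\varphi,\varphi\rangle \geq \frac{\kappa}{1+\kappa}\|\nabla\varphi\|_{L^2}^2$. Poincaré's inequality then upgrades this to a bound by $\|\varphi\|_{H^1}^2$.

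For the discrete upper bound, Proposition~\ref{pr:equivalence_h1} bounds $\langle H_{h_n}v,v\rangle \leq C_\Xi|v|_{1,2}^2$, uniformly in $n$ thanks to uniform regularity. The nonlinear terms are bounded by $3\int |\uu_n|^2|v|^2 \leq 3\|\uu_n\|_{L^4}^2\|v\|_{L^4}^2$. Using the discrete Sobolev inequality $\|w\|_{L^4} \leq C_\Xi |w|_{1,2}$ from \cite{BessemoulinChainaisFilbet2015} (already used in Proposition~\ref{pr:cv_critical_points}), this is at most $C|\uu_n|_{1,2}^2|v|_{1,2}^2$.

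The only genuine point is the uniform bound on $|\uu_n|_{1,2}$, which I expect to be the main (mild) obstacle. It comes from the boundedness of $E_{h_n}(\uu_n)$ established in the proof of Theorem~\ref{th:cv_ground_states}, via comparison with $E_{h_n}(\widetilde{P}_{h_n}\varphi)$. Combined with the lower bound of Proposition~\ref{pr:equivalence_h1}, it gives $|\uu_n|_{1,2}^2 \leq \frac{2(1+\kappa)}{\kappa}E_{h_n}(\uu_n) \leq C$, as recalled at the start of this subsection. This yields the upper constant $C_1$ independent of $n$.
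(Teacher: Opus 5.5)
Your proposal is correct and follows the paper's proof. You discard the nonnegative nonlinear Hessian terms to reduce both lower bounds to the coercivity of $H$ (which the paper cites from \cite{DodingHenning2024} and you re-derive as the continuous analogue of Proposition~\ref{pr:equivalence_h1}) and of $H_{h_n}$, and you obtain the upper bound from Proposition~\ref{pr:equivalence_h1}, discrete Sobolev inequalities and the uniform bound $|\uu_n|_{1,2}^2\le \frac{2(1+\kappa)}{\kappa}E_{h_n}(\uu_n)\le C$, exactly as the paper does.
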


\begin{proof}
  We begin with the first claim. For $\varphi \in H^1_0(\mathcal{D})$, we have 
  $$
    \langle E''(\phi^*)\varphi, \varphi \rangle = \langle H \varphi, \varphi\rangle + \langle |\phi^*|^2 \varphi, \varphi\rangle + 2\langle \phi^* \Re(\phi^*\overline{\varphi}), \varphi\rangle,
  $$
  Then, using using 
  $$
  \langle \phi^* \Re(\phi^*\overline{\varphi}), \varphi \rangle = \Re\left(\int_{\mathcal{D}} \phi^* \Re(\phi^* \overline{\varphi}) \overline{\varphi}\right) = \int_\mathcal{D} \Re(\phi^* \overline{\varphi})^2,
  $$
  we obtain $\langle E''(\phi^*) \varphi, \varphi \rangle \geq \langle H \varphi, \varphi \rangle$. The coercivity of $a$ then follows from the coercivity of $H$ using the assumption \eqref{eq:assumption_V} on the potential (see \cite[Appendix A]{DodingHenning2024} for the proof).

  For the second claim, a similar computation shows that $\langle E_{h_n}''(u_n^*) v, v\rangle \geq \langle H_{h_n} v, v\rangle$, so that $a_n(v, v) \geq C|v|_{1, 2}^2$ may be deduced from Proposition~\ref{pr:equivalence_h1}. The last inequality may be obtained from $\langle H_{h_n} v, v\rangle \leq C |v|_{1, 2}^2$ given in Proposition~\ref{pr:equivalence_h1}, as the sequence of meshes is assumed to be uniformly regular, as well as discrete Sobolev inequalities combined with the fact that $|u_n^*|_{1,2}$ is bounded independently of $n$.
\end{proof}

From Lemma~\ref{lemma:continuity_coercivity_bilinear} and using the Lax--Milgram theorem, for all $f \in L^2(\mathcal{D})$, there exists a unique $\varphi \in F$ such that, for all $\chi \in F$, we have $a(\varphi, \chi) = \langle f, \chi\rangle$. Using the compact injection of $H^1_0(\mathcal{D})$ in $L^2(\mathcal{D})$ and defining the operator $T$ by $Tf = \varphi$, we find that $T$ is a self-adjoint compact operator on $L^2(\mathcal{D})$. Similarly, we may define the compact $L^2(\mathcal{D})$ operator $T_n$ by $a(T_n f, v) = \langle f, v\rangle$ for all $v \in F_n$. One also verifies easily the identities $\ker(T) = F^\perp$ and $\ker(T_n) = F_n^\perp$ where the orthogonal complement is taken over $L^2(\mathcal{D})$. In particular, $\ker(T)$ is generated by $\phi^*$ and $i\phi^*$ while $\ker(T_n)$ is finite codimensional of codimension $\dim(F_n) = \dim(X(\mathcal{M}_n)) - 2 $. Therefore, $0$ is an eigenvalue of $T$ of multiplicity 2 while it is of infinite multiplicity for $T_n$.

We may now define the eigenbases $(\alpha^k_n, v^k_n)_{k \in \N^*}$ and $(\alpha^k, \varphi^k)_{k \in \N^*}$ of $T_n$ and $T$ respectively, where all eigenvalues are nonnegative (from Lemma~\ref{lemma:continuity_coercivity_bilinear}) and arranged in non-increasing order. For all nonzero eigenvalues, we may define $\beta_n^k = 1/\alpha_n^k$ and $\beta^k = 1/\alpha^k$ so that the eigenpairs $(\beta_n^k, v_n^k)_{k \in \{1, \dots, \dim(F_n)\}}$ and $(\beta^k, \varphi^k)_{k \in \N^*}$ are solutions to the constrained eigenvalue problems 
\begin{equation}\label{eq:def_beta_k_n}
a_n(v_n^k, w) = \beta_n^k \langle v_n^k, w\rangle,
\end{equation}
for all $w \in F_n$ and 
\begin{equation}\label{eq:def_beta_k}
a(\varphi^k, \chi) = \beta^k \langle \varphi^k, \chi\rangle,
\end{equation}
for all $\chi \in F$. From these considerations, one readily proves the following.
\begin{lemma}\label{lemma:expressions_lambda}
  Let $\lambda$ and $(\lambda_n)_{n \in \N}$ be defined as in Proposition~\ref{th:spectral_gap_convergence}. We have
  $$
  \lambda = \frac{1}{\alpha^1} - \mu \text{ and } \lambda_n = \frac{1}{\alpha_n^1} - \mu_n,
  $$
  for all $n \in \N$, where $\alpha^1$ and $\alpha_n^1$ are defined as above.
\end{lemma}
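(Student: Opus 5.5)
The identity follows from the min--max (Rayleigh quotient) characterization of the smallest eigenvalue of the constrained problems \eqref{eq:def_beta_k} and \eqref{eq:def_beta_k_n}, combined with the observation that on $F$ (resp.\ $F_n$) the shift by $\mu\Id$ (resp.\ $\mu_n\Id$) only adds the constant $-\mu$ (resp.\ $-\mu_n$) to the Rayleigh quotient. For $\varphi\in F\setminus\{0\}$ I would write
$$
\frac{\langle (E''(\uphi)-\mu\Id)\varphi,\varphi\rangle}{\|\varphi\|_{L^2}^2}=\frac{a(\varphi,\varphi)}{\|\varphi\|_{L^2}^2}-\mu .
$$
Hence $\lambda=\inf_{F\setminus\{0\}} a(\varphi,\varphi)/\|\varphi\|_{L^2}^2-\mu$, and similarly $\lambda_n=\inf_{F_n\setminus\{0\}} a_n(v,v)/\|v\|_{L^2}^2-\mu_n$. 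It then remains to show that these infima equal $\beta^1=1/\alpha^1$ and $\beta_n^1=1/\alpha_n^1$.

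\textbf{Discrete case.} $F_n$ is finite dimensional, and $a_n$ is a symmetric, coercive real bilinear form on it by Lemma~\ref{lemma:continuity_coercivity_bilinear}. Restricting to $F_n$ with the real inner product $\langle\cdot,\cdot\rangle$, the operator $T_n|_{F_n}$ is the inverse of the self-adjoint positive operator associated with $a_n$. Its nonzero eigenvalues are therefore exactly the $\alpha_n^k$, $k\le\dim F_n$, and the eigenvectors $v_n^k$ form an $L^2$-orthonormal basis of $F_n$ that is also $a_n$-orthogonal. Expanding $v=\sum_k c_k v_n^k$ gives
$$
\frac{a_n(v,v)}{\|v\|^2}=\frac{\sum_k\beta_n^k c_k^2}{\sum_k c_k^2}\ge \min_k\beta_n^k=\beta_n^1=\frac{1}{\alpha_n^1},
$$
since the $\alpha_n^k$ are ordered non-increasingly. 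Equality holds at $v=v_n^1$.

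\textbf{Continuous case.} The same argument applies with the spectral theorem for the compact self-adjoint positive operator $T$ on the closed subspace $\overline{F}^{L^2}=\{\uphi,i\uphi\}^\perp$. Its nonzero eigenvectors $\varphi^k$ form an orthonormal basis of $\overline{F}^{L^2}$, because $\ker T=F^\perp$. I would also check that they are $a$-orthogonal, with $a(\varphi^k,\varphi^k)=\beta^k$, using \eqref{eq:def_beta_k}.

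For $\varphi\in F$, I would then show $a(\varphi,\varphi)=\sum_k\beta^k c_k^2$ with $c_k=\langle\varphi,\varphi^k\rangle$. The cleanest route is to note that $\{\varphi^k/\sqrt{\beta^k}\}$ is an orthonormal basis of $F$ for the Hilbert inner product $a$. Completeness holds because if $a(\varphi,\varphi^k)=0$ for all $k$, then $\beta^k\langle\varphi,\varphi^k\rangle=0$ for all $k$, so $\varphi\perp$ all $\varphi^k$, which forces $\varphi=0$ in $\overline F^{L^2}$. The $a$-expansion coefficients are $a(\varphi,\varphi^k)/\sqrt{\beta^k}=\sqrt{\beta^k}\,c_k$, which gives the formula. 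Then $a(\varphi,\varphi)\ge\beta^1\sum_k c_k^2=\beta^1\|\varphi\|^2$, with equality at $\varphi^1$. Here $\beta^1=\min_k\beta^k$ because $\alpha^1=\max_k\alpha^k=\|T\|$.

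\textbf{Main obstacle.} The only delicate point is this infinite-dimensional step: one must justify that the $a$-energy expansion converges in $F\subset H^1_0$ rather than just in $L^2$, and that the real-Hilbert-space structure (treating $L^2$ over $\R$) is consistent with the spectral theorem. Both are handled by working throughout with the real inner products $\langle\cdot,\cdot\rangle$ and $a$ as above. Subtracting $\mu$ and $\mu_n$ then yields the two stated formulas.
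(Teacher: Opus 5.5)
Your proof is correct and follows the route the paper has in mind: the paper gives no separate argument (``one readily proves''), relying on the Rayleigh-quotient characterization of the smallest constrained eigenvalue $\beta^1 = 1/\alpha^1$ (resp.\ $\beta_n^1=1/\alpha_n^1$), together with the observation that the shift by $\mu\Id$ (resp.\ $\mu_n\Id$) subtracts a constant from the quotient on $F$ (resp.\ $F_n$). Your $a$-orthonormal-basis argument in the continuous case is valid, since $F$ is closed in $H^1_0(\mathcal{D})$ and $a$ is an equivalent inner product there; a slightly shorter alternative is $\|\varphi\|_{L^2}^2 = a(T\varphi,\varphi)\le a(T\varphi,T\varphi)^{1/2}a(\varphi,\varphi)^{1/2}$ combined with $a(T\varphi,T\varphi)=\langle \varphi,T\varphi\rangle\le\alpha^1\|\varphi\|_{L^2}^2$ for $\varphi\in F$.
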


In order to prove Proposition~\ref{th:spectral_gap_convergence}, we may therefore show the convergence of eigenvalues of $T_n$ towards eigenvalues of $T$. Using Babuska--Osborn theory, this is equivalent to showing the convergence of $T_n$ towards $T$ in operator norm \cite{BabuskaOsbornEigenvalue1991}. More specifically, we adapt the arguments from \cite{KopruckiEymardConvergence2007}, in which spectral convergence problems are considered for finite volume schemes, to the \emph{constrained} eigenvalue problems of interest in our context. We begin by proving the following lemma, giving properties referred to as compactness and consistency in \cite{KopruckiEymardConvergence2007}.

\begin{lemma}\label{lemma:compactness_consistency_hessian}
  Let $(v_n)_{n \in \N}$ be a sequence in $X(\mathcal{M}_n)$ such that there exists $C \geq 0$ with $a_n(v_n, v_n) \leq C$ for all $n \in \N$. Then, up to a subsequence, $(v_n)_n$ converges in $L^2(\mathcal{D})$ towards a limit $\varphi \in H^1_0(\mathcal{D})$. Furthermore, for such a subsequence and for all $\chi \in \mathcal{C}^\infty_c(\mathcal{D})$, we have 
  $$
  \lim_{n \rightarrow \infty} a_n(v_n, P_{h_n}\chi) = a(\varphi, \chi).
  $$
\end{lemma}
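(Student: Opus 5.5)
We reduce everything to the linear compactness/consistency result of Proposition~\ref{pr:compactness_consistency}, and treat the two nonlinear Hessian terms by the strong $L^4$ convergence already established for the minimizers.

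\textbf{Compactness.} By Lemma~\ref{lemma:continuity_coercivity_bilinear}, $a_n(v_n,v_n)\le C$ gives $|v_n|_{1,2}^2\le C/C_2$. Indeed, writing $a_n(v,v)=\langle H_{h_n}v,v\rangle+\int|\uu_n|^2|v|^2+2\int\Re(\overline{\uu_n}v)^2$, both nonlinear terms are nonnegative, so $\langle H_{h_n}v_n,v_n\rangle\le C$. Proposition~\ref{pr:compactness_consistency} then yields a subsequence with $v_n\to\varphi$ in $L^2(\mathcal{D})$ and $\varphi\in H^1_0(\mathcal{D})$. It also gives $\langle H_{h_n}v_n,P_{h_n}\chi\rangle\to\langle H\varphi,\chi\rangle$ for $\chi\in\mathcal{C}^\infty_c(\mathcal{D})\subset H^1_0\cap H^2$, taking real parts of the complex statement. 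Along a further subsequence we may assume $\uu_n\to\uphi$ in $L^2$; by the proof of Proposition~\ref{pr:cv_critical_points}, this also holds in $L^4$. Interpolating $L^2$ with the discrete Sobolev bound $\|w\|_{L^6}\le C_\Xi|w|_{1,2}$ in the same way shows $v_n\to\varphi$ in $L^4$, since $|v_n|_{1,2}$ is bounded and $\varphi\in H^1\subset L^6$.

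\textbf{Consistency of the nonlinear terms.} It remains to show
\[
\int_{\mathcal{D}}|\uu_n|^2\,\Re\bigl(v_n\overline{P_{h_n}\chi}\bigr)\to\int_{\mathcal{D}}|\uphi|^2\,\Re(\varphi\overline{\chi})
\]
and the analogous limit for $2\int\Re(\overline{\uu_n}v_n)\Re(\overline{\uu_n}P_{h_n}\chi)$. Both integrands are trilinear-type products of $\uu_n,\uu_n,v_n,P_{h_n}\chi$. The first three factors converge strongly in $L^4$. The last satisfies $\|P_{h_n}\chi\|_{L^\infty}\le\|\chi\|_{L^\infty}$ and, by Proposition~\ref{pr:approx_interpolants}, $P_{h_n}\chi\to\chi$ in $L^4$, hence in every $L^p$ with $p<\infty$ since it is uniformly bounded. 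A telescoping decomposition with Hölder's inequality, using exponents $(4,4,4,\infty)$ or $(4,4,4,4)$, gives the limits. Adding the linear and nonlinear parts yields $a_n(v_n,P_{h_n}\chi)\to a(\varphi,\chi)$.

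\textbf{Main obstacle.} The delicate point is that the limit $\uphi$ of $\uu_n$ must be the same ground state used to define $a$. This holds because the sequence $(\uu_n)$ has been fixed in this section to converge to $\uphi$, so no further extraction changes that limit. The other point to check is that the $L^4$ convergence of $v_n$ uses only $|v_n|_{1,2}$ bounds under uniform mesh regularity.
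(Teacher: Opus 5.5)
Your proposal is correct and follows essentially the same route as the paper. You bound $|v_n|_{1,2}$ through the coercivity of $a_n$, obtain compactness and the linear consistency from Proposition~\ref{pr:compactness_consistency}, and treat the two nonlinear Hessian terms by strong $L^4$ convergence of $u_n^*$ and $v_n$, obtained by interpolating the $L^2$ convergence with the discrete Sobolev $L^6$ bound. Your write-up is slightly more precise than the paper's: you pair the nonlinear terms with $P_{h_n}\chi$, using its uniform $L^\infty$ bound and its $L^4$ convergence to $\chi$, which is what the limit $a_n(v_n,P_{h_n}\chi)\to a(\varphi,\chi)$ actually requires, whereas the paper's proof writes these terms with $v_n$ in both slots.
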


\begin{proof}
  We begin by proving the existence of a limit for $(v_n)_{n \in \N}$. From Lemma~\ref{lemma:continuity_coercivity_bilinear}, we know that $a_n(v_n, v_n) \leq C$ implies that $(|v_n|_{1,2})_{n \in \N}$ is bounded. Therefore, $(v_n)_{n \in \N}$ converges up to a subsequence in $L^2(\mathcal{D})$ to $\varphi \in H^1_0(\mathcal{D})$. Moreover, we know from Proposition \ref{pr:compactness_consistency} that for all $\chi \in \mathcal{C}^\infty_c(\mathcal{D})$,  
  $$
  \lim_{n \rightarrow \infty} \langle H_{h_n} v_n, P_{h_n} \chi\rangle = \langle H \varphi, \chi \rangle.
  $$
  It thus remains to show that $\langle |\uu_n|^2 v_n, v_n\rangle$ and $\langle \uu_n \Re(\uu_n \overline{v_n}), v_n\rangle$ converge to $\langle |\uphi|^2 \varphi, \varphi\rangle$ and $\langle \uphi \Re(\uphi\overline{\varphi}), \varphi\rangle$ respectively. For this, we recall that $(\uu_n)_{n \in \N}$ and $(v_n)_{n \in \N}$ converge respectively to $\uphi$ and $\varphi$ in $L^2(\mathcal{D})$ and that $|\uu_n|_{1, 2}$ and $|v_n|_{1, 2}$ are uniformly bounded. Therefore, we may proceed with the same argument used in the proof of Proposition \ref{pr:cv_critical_points} to show that convergence occurs in $L^p(\mathcal{D})$ for all $p < 6$ to conclude.
\end{proof}

We are now able to prove the convergence of $T_n$ towards $T$.

\begin{lemma}\label{lemma:convergence_resolvants}
  We have 
  $$
  \lim_{n \rightarrow \infty} \|T_n - T\|_{\mathcal{L}(L^2(\mathcal{D}))} = 0,
  $$
  where $\|\cdot\|_{\mathcal{L}(L^2(\mathcal{D}))}$ is the operator norm on continuous linear maps from $L^2(\mathcal{D})$ to itself.
\end{lemma}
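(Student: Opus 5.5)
We argue by contradiction, following the standard compactness route of \cite{KopruckiEymardConvergence2007}. Suppose the claim fails. Then there exist $\eta>0$, a subsequence (not relabelled) and $f_n \in L^2(\mathcal{D})$ with $\|f_n\|_{L^2}=1$ such that $\|T_n f_n - T f_n\|_{L^2}\geq \eta$. Up to a further subsequence, $f_n \rightharpoonup f$ weakly in $L^2(\mathcal{D})$. Since $T$ is compact, $Tf_n \to Tf$ strongly. It therefore suffices to prove that $T_n f_n \to Tf$ strongly in $L^2(\mathcal{D})$, which contradicts the choice of $f_n$.

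\textbf{Step 1: compactness.} Set $v_n = T_n f_n \in F_n$. Testing the defining relation with $v_n$ and using the lower bound of Lemma~\ref{lemma:continuity_coercivity_bilinear} together with the discrete Poincaré inequality gives
$$
C_2 |v_n|_{1,2}^2 \leq a_n(v_n,v_n) = \langle f_n, v_n\rangle \leq C |v_n|_{1,2}.
$$
Hence $a_n(v_n,v_n)$ is bounded. By Lemma~\ref{lemma:compactness_consistency_hessian}, up to a subsequence $v_n \to \varphi$ in $L^2(\mathcal{D})$ for some $\varphi\in H^1_0(\mathcal{D})$, and $a_n(v_n,P_{h_n}\chi)\to a(\varphi,\chi)$ for all $\chi\in\mathcal{C}^\infty_c(\mathcal{D})$. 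Passing to the limit in $\langle v_n,\uu_n\rangle=\langle v_n,i\uu_n\rangle=0$, using the $L^2$ convergence $\uu_n\to\uphi$ from Theorem~\ref{th:cv_ground_states}, shows that $\varphi\in F$.

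\textbf{Step 2: identification of the limit, which is the main obstacle.} We must show $a(\varphi,\chi)=\langle f,\chi\rangle$ for all $\chi\in F$, so that $\varphi=Tf$. The difficulty is that $P_{h_n}\chi$ need not lie in $F_n$, so it cannot be used directly as a test function. We correct it by projecting: for $\chi\in\mathcal{C}^\infty_c(\mathcal{D})$ set
$$
w_n = P_{u_n^*}^\C P_{h_n}\chi = P_{h_n}\chi - \langle P_{h_n}\chi,\uu_n\rangle \uu_n - \langle P_{h_n}\chi, i\uu_n\rangle i\uu_n \in F_n .
$$
Then $a_n(v_n,w_n)=\langle f_n,w_n\rangle$. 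The correction terms are handled as follows.
\begin{itemize}
\item The coefficients $c_n=\langle P_{h_n}\chi,\uu_n\rangle$ and $c_n'=\langle P_{h_n}\chi,i\uu_n\rangle$ converge to $\langle\chi,\uphi\rangle$ and $\langle\chi,i\uphi\rangle$, by Proposition~\ref{pr:approx_interpolants}.
\item By self-adjointness of $E''_{h_n}(\uu_n)$, we have $a_n(v_n,\uu_n)=\langle v_n, E''_{h_n}(\uu_n)\uu_n\rangle = \langle v_n, \mu_n\uu_n + 2|\uu_n|^2\uu_n\rangle$, using \eqref{eq:discrete_minimizer}. Since $\langle v_n,\uu_n\rangle=0$, this reduces to $2\langle v_n,|\uu_n|^2\uu_n\rangle$, which converges to $2\langle\varphi,|\uphi|^2\uphi\rangle$ by the $L^p$ convergence ($p<6$) argument of Proposition~\ref{pr:cv_critical_points}.
\item Similarly, $a_n(v_n,i\uu_n)=\mu_n\langle v_n,i\uu_n\rangle=0$, and the continuous analogue gives $a(\varphi,i\uphi)=0$.
\end{itemize}
Passing to the limit, and using $f_n\rightharpoonup f$ together with $w_n\to\tilde\chi$ strongly, where $\tilde\chi=\chi-\langle\chi,\uphi\rangle\uphi-\langle\chi,i\uphi\rangle i\uphi$, we obtain
$$
a(\varphi,\chi)-\langle\chi,\uphi\rangle a(\varphi,\uphi) = \langle f,\tilde\chi\rangle .
$$
The left-hand side equals $a(\varphi,\tilde\chi)$, since $a(\varphi,i\uphi)=0$. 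By density of $\{\tilde\chi : \chi \in \mathcal{C}^\infty_c(\mathcal{D})\}$ in $F$ for the $H^1$ norm, and by continuity of $a$ on $H^1_0(\mathcal{D})$, we conclude that $a(\varphi,\psi)=\langle f,\psi\rangle$ for all $\psi\in F$. Thus $\varphi=Tf$.

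\textbf{Step 3: uniqueness of the limit.} Since the limit $Tf$ is unique, the whole subsequence satisfies $T_nf_n\to Tf$ in $L^2(\mathcal{D})$. Combined with $Tf_n\to Tf$, this gives $\|T_nf_n-Tf_n\|_{L^2}\to0$, contradicting $\|T_nf_n-Tf_n\|_{L^2}\geq\eta$.

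A remaining technical point is the action of $T_n$ on the component of $f_n$ in $F_n^\perp$. This is harmless, because the definition of $T_n$ only tests $f_n$ against $F_n$. The restriction of $T_n$ to $X(\mathcal{M}_n)$, composed with the $L^2$ projection $\Pi_{h_n}$, should be used consistently. The weak limit passage still holds because $\Pi_{h_n}g\to g$ for all $g\in L^2(\mathcal{D})$.
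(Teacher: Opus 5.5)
Your proof is correct and follows the paper's overall scheme: contradiction, boundedness of $v_n=T_nf_n$ via coercivity and Poincaré, compactness from Lemma~\ref{lemma:compactness_consistency_hessian}, $\varphi\in F$, and compactness of $T$ to get $Tf_n\to Tf$. The real difference is in how you identify $\varphi=Tf$.

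\textbf{The paper's route.} It tests only with $\chi\in\mathcal{C}^\infty_c(\mathcal{D})\cap F$. It then disposes of the correction with the bound $|a_n(v_n,\Pi_n^\perp P_{h_n}\chi)|\le C|v_n|_{1,2}\,|\Pi_n^\perp P_{h_n}\chi|_{1,2}$, using that the coefficients $\langle P_{h_n}\chi,\uu_n\rangle$ and $\langle P_{h_n}\chi,i\uu_n\rangle$ tend to $0$ (because $\chi\in F$) and that $|\uu_n|_{1,2}$ is bounded. This needs only the uniform continuity and symmetry of $a_n$, not the Euler--Lagrange equation. The price is that it requires density of $\mathcal{C}^\infty_c(\mathcal{D})\cap F$ in $F$. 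The paper asserts this rather quickly; it is true by a codimension-two argument.

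\textbf{Your route.} You test with arbitrary $\chi\in\mathcal{C}^\infty_c(\mathcal{D})$ and compute the correction terms exactly from the identities $E''_{h_n}(\uu_n)\uu_n=\mu_n\uu_n+2|\uu_n|^2\uu_n$ and $E''_{h_n}(\uu_n)(i\uu_n)=\mu_n i\uu_n$. This uses the discrete critical point equation and the $L^p$ convergence of the cubic term. In return the density step becomes immediate: $\chi\mapsto\tilde\chi$ is an $H^1$-continuous projection onto $F$ that fixes $F$, so its image of $\mathcal{C}^\infty_c(\mathcal{D})$ is dense in $F$.

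\textbf{Small points.} You should state explicitly that the limit $2\langle\varphi,|\uphi|^2\uphi\rangle$ coincides with $a(\varphi,\uphi)$. This follows from the continuous identity $E''(\uphi)\uphi=\mu\uphi+2|\uphi|^2\uphi$ in $H^{-1}(\mathcal{D})$, together with symmetry of $a$ and $\langle\varphi,\uphi\rangle=0$. Your last paragraph about $F_n^\perp$ and $\Pi_{h_n}$ is unnecessary. Since $w_n\in F_n$ converges strongly and $f_n$ converges weakly, $\langle f_n,w_n\rangle\to\langle f,\tilde\chi\rangle$ directly.
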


\begin{proof}
We adapt the proof from \cite{KopruckiEymardConvergence2007}. Lemma~\ref{lemma:convergence_resolvants} being false is equivalent to the existence of both a sequence $(f_n)_{n \in \N}$ normalized in $L^2(\mathcal{D})$ and some $\varepsilon > 0$ such that
  $$
  \|T_n f_n - T f_n\|_{L^2} \geq \varepsilon,
  $$
  for all $n$ in $\N$. Therefore, one only needs to pick such a sequence arbitrarily and prove that $\|T_n f_n - T f_n\|_{L^2} \rightarrow 0$. Defining $v_n = T_n f_n$, we have $a_n(v_n, w) = \langle f_n, w\rangle$
  for all $w$ in $F_n$. In particular, since $v_n \in F_n$ and $\|f_n\|_{L^2}=1$, we have
  \begin{equation}\label{eq:first_bound_a_n_v_n}
  a_n(v_n, v_n) \leq \|v_n\|_{L^2}.
  \end{equation}
  Moreover, using the coercivity of $a_n$ from Lemma~\ref{lemma:continuity_coercivity_bilinear} and the discrete Poincaré inequality, we obtain $a_n(v_n, v_n) \geq C |v_n|_{1, 2}^2 \geq C \|v_n\|_{L^2}^2$. Combining this inequality together with \eqref{eq:first_bound_a_n_v_n}, we obtain
  $$
  C\|v_n\|_{L^2}a_n(v_n, v_n)^{1/2} \leq a_n(v_n, v_n) \leq \|v_n\|_{L^2},
  $$
  so that $(a_n(v_n, v_n))_{n \in \N}$ is bounded. From Lemma \ref{lemma:compactness_consistency_hessian}, there thus exists $\varphi \in H^1_0(\mathcal{D})$ such that $(v_n)_{n\in \N}$ converges up to a subsequence to $\varphi$ in $L^2(\mathcal{D})$. Moreover, since $(f_n)_{n \in \N}$ is bounded in $L^2(\mathcal{D})$, we may extract another subsequence so that $(f_n)_{n \in \N}$ converges weakly to a limit $f \in L^2(\mathcal{D})$.

  We now wish to show that $\varphi = Tf$. First, since $v_n \in F_n$, we have $\langle v_n, \uu_n\rangle = \langle v_n, i \uu_n\rangle = 0$. Using that $\uu_n$ converges to $\uphi$ in $L^2(\mathcal{D})$ and passing to the limit, we find that $\varphi\in F$. It remains to show that, for all $\chi \in F$, we have 
  $$
  a(\varphi, \chi) = \langle f, \chi\rangle.
  $$
  We now define the projections $\Pi$ and $\Pi_n$ on $L^2$ by 
  $$
  \Pi g = g - \langle g, \phi^* \rangle \phi^* - \langle g, i\phi^* \rangle i \phi^* \text{ and } \Pi_n g = g - \langle g, u_n^* \rangle u_n^* - \langle g, iu_n^* \rangle i u_n^*,
  $$
  for $g \in L^2(\mathcal{D})$ and $n \in \N$. Note that, in general, $\Pi_n g$ is not in $F_n$, but that $\Pi_n$ does map $X(\mathcal{M}_n)$ to $F_n$. From the convergence of $u_n^*$ to $\phi^*$ in $L^2(\mathcal{D})$, it is then clear that $\Pi_n$ converges to $\Pi$ in $\mathcal{L}(L^2(\mathcal{D}))$. Therefore, since $v_n = T_n f_n$, we have for all $\chi \in \mathcal{C}^\infty_c(\mathcal{D}) \cap F$ that
  \begin{equation}\label{eq:relation_v_n}
    a_n(v_n, \Pi_n P_{h_n} \chi) = \langle f_n, \Pi_n P_{h_n} \chi \rangle,
  \end{equation}
  as $\Pi_n P_{h_n} \chi$ is in $F_n$. Since $f_n$ converges weakly to $f$, $P_{h_n} \chi$ converges strongly to $\chi$ using Proposition~\ref{pr:approx_interpolants} and $\Pi_n$ converges to $\Pi$, we find that the right-hand side of \eqref{eq:relation_v_n} converges to $\langle f,  \Pi \chi\rangle = \langle f, \chi \rangle$. For the left-hand side of \eqref{eq:relation_v_n}, we write 
  $$
  a_n(v_n, \Pi_n P_{h_n} \chi) = a_n(v_n, P_{h_n} \chi) - a_n(v_n, \Pi_n^\perp P_{h_n} \chi),
  $$
  where $\Pi_n^\perp \coloneq \Id - \Pi_n$. From Lemma \ref{lemma:compactness_consistency_hessian}, we know that $a_n(v_n, P_{h_n}\chi)$ converges to $a(\varphi, \chi)$. Moreover, from Lemma~\ref{lemma:continuity_coercivity_bilinear} and the fact that $a_n$ is symmetric, we find
  $$
  |a_n(v_n, \Pi_n^\perp P_{h_n} \chi)| \leq C|v_n|_{1, 2} |\Pi_n^\perp P_{h_n} \chi|_{1, 2} \leq C|\Pi_n^\perp P_{h_n} \chi|_{1, 2},
  $$
  where we used that $|v_n|_{1, 2}$ is bounded. It thus remains to show that $|\Pi_n^\perp P_{h_n} \chi|_{1, 2}$ converges to zero. For $g \in L^2(\mathcal{D})$, we have the expression $\Pi_n^\perp g = \langle g, u_n^* \rangle \uu_n + \langle g, iu_n^* \rangle i\uu_n$, so that 
  $$
  |\Pi_n^\perp P_{h_n} \chi|_{1, 2} \leq \left(|\langle \uu_n, P_{h_n} \chi\rangle| + |\langle i \uu_n, P_{h_n} \chi\rangle|\right)|\uu_n|_{1, 2}.
  $$
  Since $\uu_n$ converges to $\uphi$ in $L^2$, $P_{h_n} \chi$ converges to $\chi$ in $L^2$ using Proposition~\ref{pr:approx_interpolants}, $\chi$ is in $F$, and $|\uu_n|_{1, 2}$ is bounded, we find the desired result. As a consequence, we find that $\varphi \in F$ and satisfies 
  $$
  a(\varphi, \chi) = \langle f, \chi \rangle,
  $$
  for all $\chi \in \mathcal{C}^\infty_c(\mathcal{D}) \cap F$. By density of $\mathcal{C}^\infty_c(\mathcal{D})$ in $H^1_0(\mathcal{D})$, we find that this identity is true for all $\chi \in F$ so that $\varphi = Tf$.

  We now conclude the proof. By convergence of $(v_n)_{n \in \N}=(T_nf_n)_{n\in\N}$ towards $\varphi=Tf$ in $L^2(\mathcal{D})$, we find that 
  $$
  \lim_{n \rightarrow \infty} \|T_n f_n - T f\|_{L^2} = 0.
  $$
  Moreover, as $(f_n)_{n \in \N}$ converges weakly in $L^2(\mathcal{D})$ towards $f$ and since $T$ is compact as noted earlier in the section, we find that $T f_n$ converges strongly in $L^2(\mathcal{D})$ towards $T f$. As such, using that
  $$
  \|T_n f_n - T f_n\|_{L^2} \leq \|T_n f_n - T f\|_{L^2} + \|T f - T f_n\|_{L^2},
  $$
  we find that $\|T_n f_n - T f_n\|_{L^2} \rightarrow 0$ as claimed.
\end{proof}

\begin{proof}[Proof of Proposition \ref{th:spectral_gap_convergence}]
  We recall that $T_n$ converges to $T$ in operator norm and that $(\alpha_n^k)_{k \in \N^*}$ and $(\alpha^k)_{k \in \N^*}$ are the eigenvalues of $T_n$ and $T$ respectively, arranged in non-increasing order. Let $m$ be the multiplicity of the first eigenvalue of $T$, that is, 
  $$
  \alpha^1 = \dots = \alpha^m > \alpha^{m+1}.
  $$
  Note that $m$ is necessarily finite from the compactness of $T$. As a consequence, there exists $\varepsilon > 0$ such that $I \coloneq (\alpha^1 - \varepsilon, +\infty)$ separates the $m$ first eigenvalues of $T$ from the rest of the spectrum, that is $I \cap \sigma(T) = \{\alpha^1, \dots, \alpha^m\} = \{\alpha^1\}$.
  
  Then, \cite[Theorem 2.1]{KopruckiEymardConvergence2007} gives the existence of $N \in \N$ such that, for all $n \geq N$, we have $I \cap \sigma(T_n) = \{\tilde{\alpha}_n^1, \dots, \tilde{\alpha}_n^m\}$ and $\lim_{n \to \infty} \tilde{\alpha}_n^j = \alpha^1$ for all $j \in \{1, \dots, m\}$. Therefore, up to a rearrangement of the eigenvalues, we necessarily have $\tilde{\alpha}_n^j = \alpha_n^j$. In particular, this implies that $\lim_{n \to +\infty} \alpha_n^1 = \alpha^1$.

  Recalling that $\lambda_n = 1/\alpha_n^1 - \mu_n$ and $\lambda = 1/\alpha^1 - \mu$ and that $\mu_n$ converges to $\mu$, we are able to conclude.
\end{proof}

\section{Numerical results} \label{sec:numerics}

\subsection{Numerical implementation}
Numerical schemes described in this section are implemented in Python. Two-dimensional triangular meshes are generated using the frontal Delaunay method of the Gmsh library \cite{GeuzaineGMSH2009} and then converted into a custom mesh structure in order to compute the necessary data for finite volume schemes, such as mesh connectivity and triangle circumcenters. We choose to mesh a disk of radius $R$ centered at the origin. Note that, compared to the theoretical analysis carried out above, the domain $\mathcal{D}$ is a polygonal approximation of the disk which depends on the mesh itself. Also, we introduce the parameters $\gamma, m > 0$, which were not present in the analysis for convenience, rewriting
\[ H_h = -\frac{1}{2m}A_h + V_h - \Omega L_h \quad \text{and} \quad E_h(u) = \langle H_h u, u \rangle + \frac{\gamma}{2}\|u\|_{L^4}^4. \]
We consider a quadratic potential, parametrized by
$$
V(x) = V_0\left((1+\varepsilon)x_1^2 + (1-\varepsilon)x_2^2\right)
$$
where $V_0 > 0$ controls the strength of the potential and $\varepsilon \in (-1, 1)$ the ellipticity. Note that $\varepsilon=0$ corresponds to an axisymmetric potential.

In practice, one also needs a time discretization of \eqref{eq:gradient_flow} in order to compute minimizers of the discrete energy. In our case, we implemented a standard semi-implicit normalized gradient descent scheme, also called imaginary time evolution method in the physics literature, which is often used for computing Gross--Pitaevskii ground states, see e.g. \cite{BaoDu2004,FaouJezequel2018,HenningDependency2023}. Such scheme is given by
\begin{equation}\label{eq:normalized_gradient_descent}
    \left\{ \begin{aligned}
        & \frac{\tilde{u}_{n+1}-u_n}{\tau} = -H_h \tilde{u}_{n+1} - |u_{n}|^2 \tilde{u}_{n+1},
        \\
        & u_{n+1} = \frac{\tilde{u}_{n+1}}{\|\tilde{u}_{n+1}\|_{L^2}},
    \end{aligned} \right.
\end{equation}
where $\tau > 0$ is a time-step parameter. We refer to \cite{BaoDu2004,BaoCai2013} for discussions on the relations between \eqref{eq:gradient_flow} and \eqref{eq:normalized_gradient_descent}. Although we have chosen not to study exponential convergence towards minimizers for \eqref{eq:normalized_gradient_descent} in this article for brevity, we expect that the tools used in \cite{FaouJezequel2018} or \cite{HenningDependency2023} could be repurposed in our context, adapting the computations of Section \ref{sec:gradient_flow}.

All codes used throughout this section are available at the \textsc{Gitlab} page \url{https://plmlab.math.cnrs.fr/rroussel/fv-gross-pitaevskii}. Note that the repository also contains all the files and data in order to reproduce the upcoming figures.

\subsection{Consistency of the discretization}

We begin by studying the numerical consistency of the finite volume discretization studied throughout this paper. We introduce the following function $\varphi$, which will be used for the numerical experiments of this section 
\begin{equation}\label{eq:explicit_u}
\varphi(r,\theta) = \sqrt{\frac{12}{\pi}}\frac{1}{R^4} re^{i\theta}(R^2-r^2),
\end{equation}
where $(r, \theta)$ denotes the usual polar coordinates. One then verifies that $\varphi$ is smooth, vanishes at the boundary and is normalized in $L^2(\mathcal{D})$. Moreover, one computes explicitly that $L \varphi = \varphi$ and that its energy is given by
\begin{equation}\label{eq:explicit_energy}
E(\varphi) = \frac{4}{mR^2} - \Omega + \frac{1}{5}V_0 R^2 + \frac{12}{35}\frac{\gamma}{\pi R^2},
\end{equation}
in the simpler case $\varepsilon=0$. In Figure~\ref{fig:consistency}, we consider two quantities as functions of the mesh parameter $h$, with parameters $R=4$, $m=1$, $\Omega=1.8$, $V_0=5$ and $\gamma=100$. First, we consider the $L^2$-error on the angular momentum $L \varphi$ compared to our finite volume discretization. More specifically, we compute for simplicity the $L^2$-norm of the error interpolated on the centers of the cells, that is, $L_h P_h \varphi - P_h L \varphi$. Moreover, we consider the global error between the discrete energy and the continuous one, given explicitly by \eqref{eq:explicit_energy}. The computations are performed for meshes having a number of triangles ranging from 507 to 119,461.
\begin{figure}[ht]
  \centering 
  \includegraphics{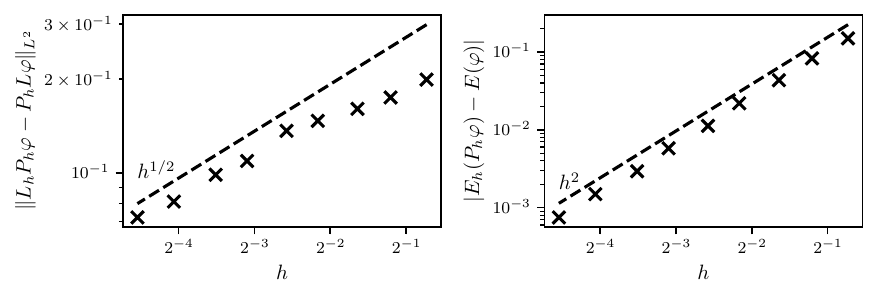}
  \caption{$L^2$-error of the angular momentum \textit{(left)} and energy error \textit{(right)} for $\varphi$ given by \eqref{eq:explicit_u}.}
  \label{fig:consistency}
\end{figure}

We observe in Figure~\ref{fig:consistency} that the $L^2$-error in angular momentum is of order $h^{1/2}$ while the energy error is of order $h^2$. We note that the latter exceeds the linear error in energy for $H^2$ functions given by Proposition~\ref{pr:convergence_energy}.

\subsection{Convergence of the normalized gradient descent algorithm}

In this section, we study the numerical convergence of \eqref{eq:normalized_gradient_descent} towards minimizers of the discrete energy. The computations are performed on a fixed mesh with 30,025 triangles and $h \approx 0.088$. We once again choose the parameters $R=4$, $m=1$, $\Omega = 1.8$, $V_0=5$ and $\gamma=100$, as well as the time step $\tau=1$. We initialize the gradient descent with the function
$$
u_0(r, \theta) = C r^\ell e^{i\ell\theta}(R^2-r^2),
$$
interpolated on the mesh with $\ell =2$. We note that, for $\Omega$ large enough, the final solution $u^*$ may be sensitive to the initial condition and that convergence towards a global minimizer may not be expected in general. Moreover, the number of iterations needed to have satisfying convergence also depends greatly on the initial condition. 

We consider the two cases, $\varepsilon=0$ and $\varepsilon=0.2$. In the rotational symmetric case $\varepsilon=0$, we know from Section \ref{sec:spectral_gap} that for non-radial minimizers, the theoretical convergence rate $\lambda$ given by \eqref{eq:lambda} becomes degenerate for finer meshes. This phenomenon may also be observed numerically, as illustrated in Figure \ref{fig:spectral_gap}. In the asymmetric case $\varepsilon=0.2$, Assumption~\ref{ass:quasi_coercivity} is expected to hold in the continuous case. Therefore, as shown in Section~\ref{sec:spectral_gap}, Assumption~\ref{ass:quasi_coercivity} is also verified in the discrete case, with convergence rates independent of the mesh size $h$, as observed in Figure~\ref{fig:spectral_gap}. We note here that $\lambda_h$ is simply computed as the difference between the first two eigenvalues of the Hessian. The authors also implemented a method to solve the constrained eigenvalue problems \eqref{eq:def_beta_k_n}, but the results were unstable, longer to obtain and gave almost identical eigenvalues as for the simpler unconstrained problems. 
\begin{figure}[ht]
  \centering
  \includegraphics{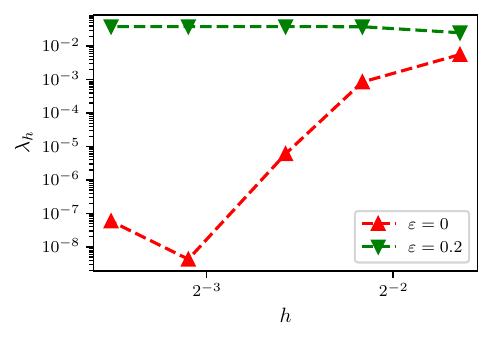}
  \caption{Spectral gap \eqref{eq:lambda} as a function of $h$ in the cases $\varepsilon=0$ and $\varepsilon=0.2$.}
  \label{fig:spectral_gap}
\end{figure}

In Figure~\ref{fig:convergence_gradient_descent}, we plot the convergence of solutions to \eqref{eq:normalized_gradient_descent} towards a minimizer in both cases $\varepsilon=0$ and $\varepsilon=0.2$. Such minimizer $u^*$ is computed after 10,000 iterations and is compared with the first 8,000 steps. In the case $\varepsilon=0$, after an initial fast decay of the error, the gradient descent \eqref{eq:normalized_gradient_descent} enters a regime where the convergence rate is drastically reduced. The opposite is observed in the case $\varepsilon=0.2$, for which the dynamics starts with a plateau for a few hundred iterations before reaching a steady exponential convergence rate. As a consequence, the method is much more efficient in nonsymmetric configurations to compute minimizers with multiple vortices, reaching machine precision in energy within approximately 4,000 iterations. This suggests that rotational symmetry should be taken into account in the numerical approach.

\begin{figure}[ht]
    \centering
    \includegraphics{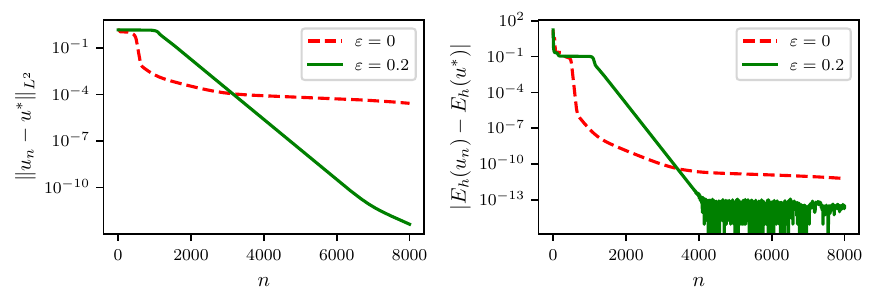}
    \caption{$L^2$-error \textit{(left)} and energy error \textit{(right)} for the normalized gradient descent in the cases $\varepsilon=0$ and $\varepsilon=0.2$.}
    \label{fig:convergence_gradient_descent}
\end{figure}

\subsection{Convergence of discrete minimizers towards continuous ground states}
We now consider the convergence of minimizers $u_h^*$ as a function of $h$. The physical parameters are the same as in the previous sections, and we focus on the non-symmetric case $\varepsilon=0.2$. The normalized gradient descent \eqref{eq:normalized_gradient_descent} was computed for 10,000 steps on meshes containing between 1,919 and 30,025 triangles, with mesh size $h$ from $h\approx 0.17$ to $h \approx 0.088$ respectively. These minimizers are compared with a reference solution $\phi^*$, also computed after 10,000 steps of \eqref{eq:normalized_gradient_descent} on a finer mesh with 119,461 triangles and $h \approx 0.043$. A plot of this reference minimizer is shown in Figure~\ref{fig:minimizers}.

\begin{figure}
  \centering 
  \includegraphics[width=\textwidth]{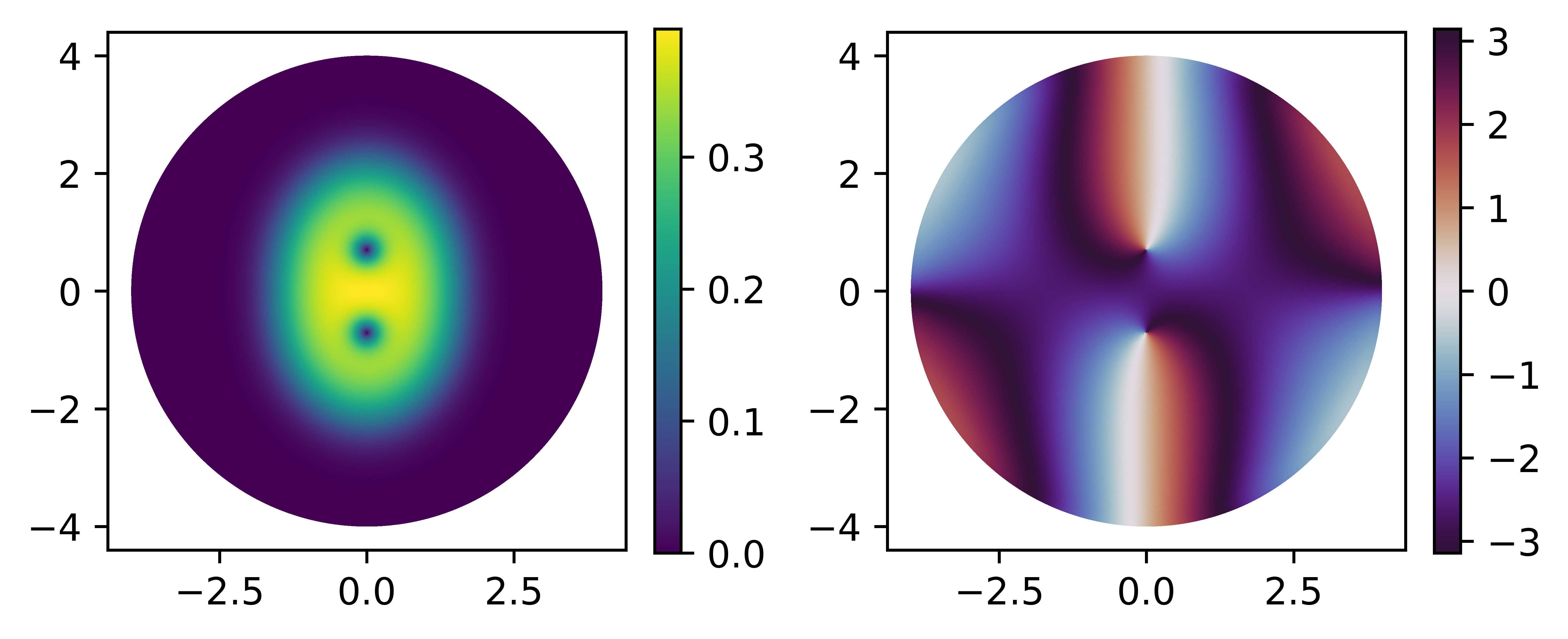}
  \caption{Modulus \textit{(left)} and argument \textit{(right)} of the two-vortices reference solution obtained on the fine mesh.}
  \label{fig:minimizers}
\end{figure}

In Figure~\ref{fig:convergence_minimizers}, we plot the $L^2$-error and the energy error as functions of $h$. In order to compute the $L^2$-error, we note two technicalities which are encountered. First, one needs to compute the $L^2$-norm of a difference of two functions which are piecewise constant on two different meshes. Numerically, this can be obtained by computing the area of all the intersections of the triangles of the two meshes. Second, one may only expect convergence of the minimizers up to a phase. Therefore, the global phases of the coarse solutions $u_h^*$ are chosen in order to minimize the $L^2$-error with the reference solution $\phi^*$. This can be found explicitly by computing the two critical points of this error as a function of the phase, and by choosing the minimum among the two. 

We notice a linear convergence of the $L^2$-error as a function of $h$, as expected by the optimal $L^2$-approximation rate provided by Proposition~\ref{pr:approx_interpolants}, as well as a quadratic convergence in the energy.

\begin{figure}[ht]
    \centering
    \includegraphics{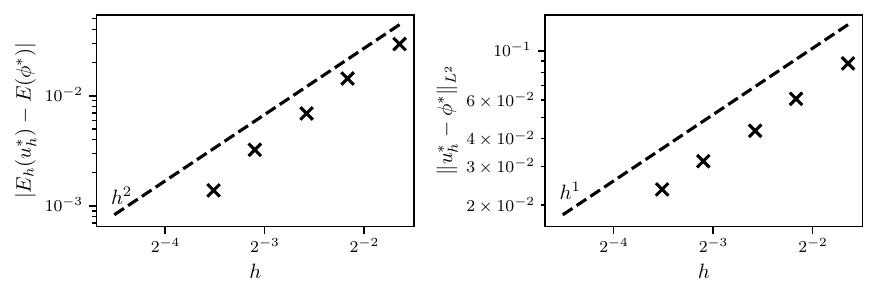}
    \caption{Energy \textit{(left)} and $L^2$-error \textit{(right)} compared to the reference solution $\phi^*$ as functions of $h$.}
    \label{fig:convergence_minimizers}
\end{figure}

\subsection{Ground states in exotic geometries} Recent BEC experiments allow for increasingly exotic geometric configurations requiring efficient numerical methods on general geometries. We illustrate the flexibility of the numerical method developed in this article by computing the ground state of moon-shaped BEC, which has the same qualitative behavior as a shell-shaped BEC under the influence of gravity, see \cite[Figure 3]{Rhyno2026}. More precisely, we consider a potential of the form
\[  V= V_0r^2+V_1 \exp\left(-\frac{(x_1-0.5)^2+x_2^2)}{2\sigma} \right),\]
which is the combination of a standard harmonic trap with an off-set repulsive Gaussian \cite{Wen2011}. We take numerical parameters as follows: $\gamma=100$, $m=1$, $V_0=50$, $V_1=1000$, $\sigma=0.2$, and $\Omega=8$. We compute in Figure \ref{fig:moonshape} discrete ground states associated with the discretization of three different domains $\mathcal{D}$ of same mesh size $h\simeq0.04$: a square $\left[-2,2\right]^2$ (composed of 41656 triangles), a centered disk of radius $2$ (32447 triangles) and a moon-shaped domain (21599 triangles). Qualitatively, the ground states look identical, which demonstrates the efficiency of the moon-shaped geometry, meshing only in areas of physical interest.

\begin{figure}
  \centering 
  \includegraphics[width=\textwidth]{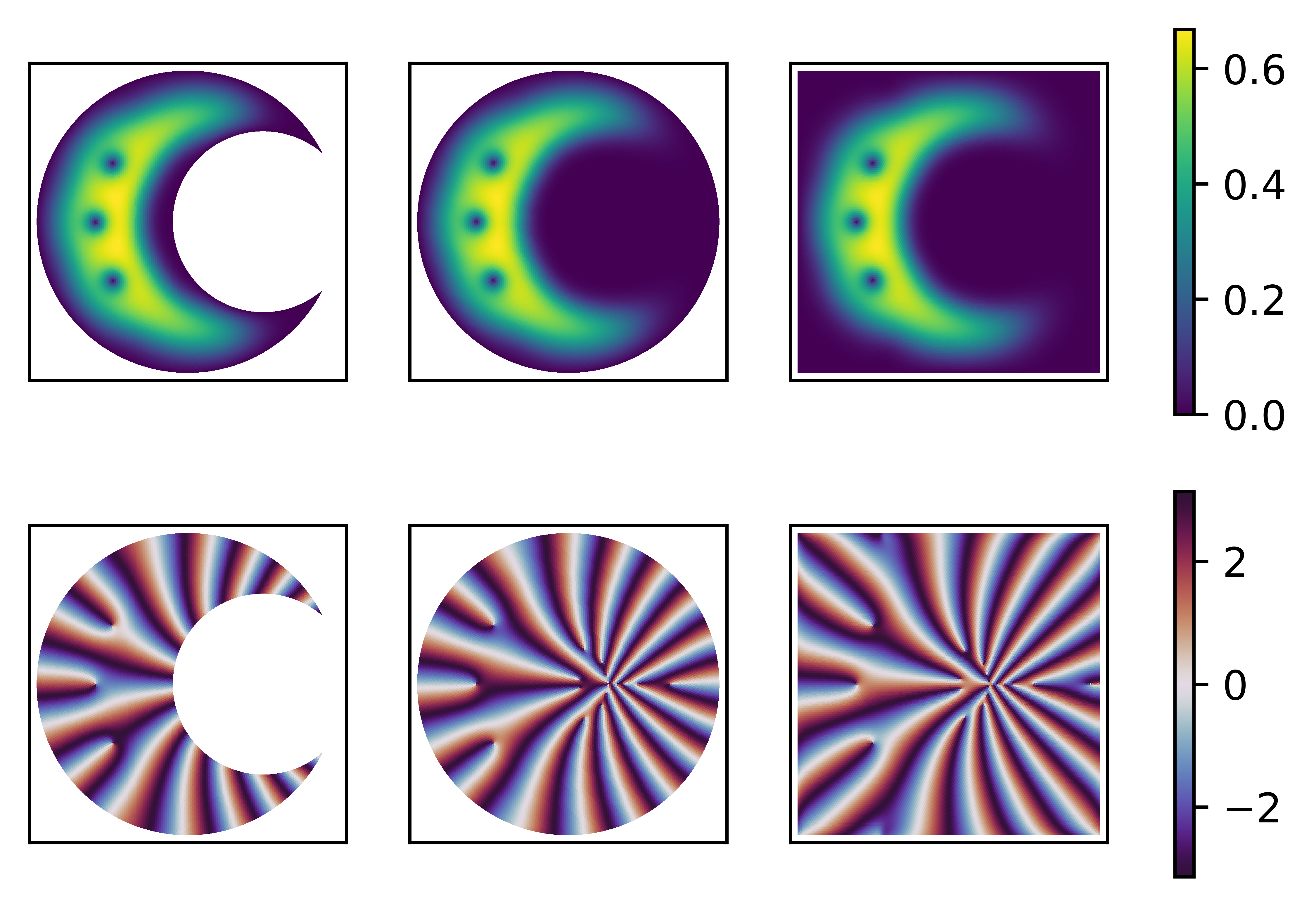}
  \caption{Modulus (\emph{top}) and argument (\emph{bottom}) of discrete ground states on various meshes, displaying 3 vortices on a moon-shaped BEC.}
  \label{fig:moonshape}
\end{figure}

\subsection*{Acknowledgements}

The authors would like to thank Flore Nabet for her insights on constrained eigenvalue problems in the finite volume framework. The authors also acknowledge the support of the CDP C2EMPI, together with the French State under the France-2030 programme, the University of Lille, the Initiative of Excellence of the University of Lille, the European Metropolis of Lille for their funding and support of the R-CDP-24-004-C2EMPI project.


\bibliographystyle{siam}
\bibliography{biblio}

\end{document}